\documentclass[reqno]{amsart}
\usepackage{amsmath,amsthm,amssymb,mathrsfs,stmaryrd,eucal,mathbbol}
\usepackage[all,cmtip]{xy}
\usepackage{hyperref}
\usepackage{graphics}
\usepackage{epstopdf}




\newcommand\void[1]       {}

\newcommand{\be}{\begin{equation}}
\newcommand{\ee}{\end{equation}}
\newcommand{\bnu}{\begin{enumerate}}
\newcommand{\enu}{\end{enumerate}}

\numberwithin{equation}{section}

\newcommand{\CA}{\mathcal{A}}
\newcommand{\CB}{\mathcal{B}}
\newcommand{\CC}{\mathcal{C}}
\newcommand{\CD}{\mathcal{D}}
\newcommand{\CE}{\mathcal{E}}

\newcommand{\CM}{\mathcal{M}}
\newcommand{\CN}{\mathcal{N}}

\newcommand{\CP}{\mathcal{P}}

\newcommand{\CX}{\mathcal{X}}

\newcommand{\FD}{\mathfrak{D}}

\newcommand{\FZ}{\mathfrak{Z}}

\newcommand{\bk}{\mathbf{k}}

 \DeclareMathOperator{\Hom}{Hom}

 \DeclareMathOperator{\Id}{Id}

 \DeclareMathOperator{\Fun}{Fun}
 
 \DeclareMathOperator{\funend}{\mathcal{E}nd}

 \DeclareMathOperator{\Alg}{Alg}

 \DeclareMathOperator{\LMod}{LMod}
 \DeclareMathOperator{\RMod}{RMod}
 \DeclareMathOperator{\BMod}{BMod}

\newcommand{\adj}[4]{\xymatrix{ #1 \ar@<.5ex>[r]^-{#3} & #2 \ar@<.5ex>[l]^-{#4}}}

\newcommand{\nn}{\nonumber \\}
\newcommand{\one}{\mathbf{1}}
\newcommand{\rev}{\mathrm{rev}}
\newcommand{\op}{\mathrm{op}}

\newcommand{\mtc}{\mathcal{MT}\mathrm{en}_k^{\mathrm{ind}}}
\newcommand{\bmtc}{\mathcal{BT}\mathrm{en}_k}
\newcommand{\mfus}{\mathcal{MF}\mathrm{us}_k^{\mathrm{ind}}}
\newcommand{\bfus}{\mathcal{BF}\mathrm{us}_k^{\mathrm{cl}}}
\newcommand{\ph}{\mathrm{ph}}

\newtheorem{thm}{Theorem}[subsection]
\newtheorem{lem}[thm]{Lemma}
\newtheorem{prop}[thm]{Proposition}
\newtheorem{cor}[thm]{Corollary}

\newtheorem{prop-defn}[thm]{Proposition-Definition}

\theoremstyle{definition}
\newtheorem{defn}[thm]{Definition}

\newtheorem{exam}[thm]{Example}
\newtheorem{rem}[thm]{Remark}

\theoremstyle{remark}

\begin{document}

\title{The center functor is fully faithful}
\maketitle

\begin{center}
{\large
Liang Kong$^{a}$,\,
Hao Zheng$^{b}$\,
~\footnote{Emails:
{\tt  kongl@sustc.edu.cn, hzheng@math.pku.edu.cn}}}
\\[1em]
$^a$ Shenzhen Institute for Quantum Science and Engineering, \\
and Department of Physics, \\
Southern University of Science and Technology, Shenzhen 518055, China
\\[0.5em]
$^b$ Department of Mathematics, Peking University,\\
Beijing, 100871, China
\end{center}

\vspace{0.2cm}
\begin{abstract}
We prove that the notion of Drinfeld center defines a functor from the category of indecomposable multi-tensor categories with morphisms given by bimodules to that of braided tensor categories with morphisms given by monoidal bimodules. Moreover, we apply some ideas from the physics of topological orders to prove that the center functor restricted to indecomposable multi-fusion categories (with additional conditions on the target category) is fully faithful. As byproducts, we provide new proofs to some important known results in fusion categories. In physics, this fully faithful functor gives the precise mathematical description of the boundary-bulk relation for 2+1D anomaly-free topological orders with gapped boundaries.


\end{abstract}

\tableofcontents

\newpage
\section{Introduction}

\void{
The notion of a fusion category has attracted a lot of attention in recent years, not only because it has a beautiful mathematical theory \cite{ostrik,eno2005,eno2008,eno2009,dgno,dmno}, but also because it has many applications in the study of topological orders in condensed matter physics \cite{lw-mod,kitaev-kong,fsv,kong-anyon,kong-wen-zheng}. Among many results for fusion categories, the following two are especially interesting in physics. Let $\CC,\CD$ be two fusion categories, $\FZ(\CC),\FZ(\CD)$ the centers of $\CC,\CD$, respectively, $\mathrm{BrPic}(\CC)$ the group of the equivalence classes of invertible $\CC$-$\CC$-bimodules and $\mathrm{Aut}^{br}(\FZ(\CC))$ the group of the equivalence classes of braided auto-equivalences of $\FZ(\CC)$. We have
\bnu
\item $\CC$ and $\CD$ are Morita equivalent if and only if $\FZ(\CC)\simeq^{br} \FZ(\CD)$;
\item there is a group isomorphism $\mathrm{BrPic}(\CC) \simeq \mathrm{Aut}^{br}(\FZ(\CC))$.
\enu
The ``only if" part of the first result was proved by M\"{u}ger \cite{mueger0} and the ``if" part by Etingof, Nikshych and Ostrik in \cite{eno2008}, independently by Kitaev in 2008. Physically, it means that two 1d (space dimension) topological orders share the same bulk phase if and only if they are Morita equivalent \cite{kitaev-kong,kong-anyon}. The second result was proved by Etingof, Nikshych and Ostrik in \cite{eno2009}, independently by Kitaev and the first author of this work in 2009. Physically, it means that automorphisms of a 2-dimensional bulk topological phases one-to-one corresponds to the invertible (anomalous) domain walls on the boundary \cite{kitaev-kong,kong-wen-zheng}. In spite of the clear physical meaning of these two results, their proofs (see \cite{eno2008,eno2009}) are somewhat technical and do not seem to have a clear physical meaning.

It turns out that these two results are immediate consequences of a single result, also the main result of this work (see Theorem \ref{thm:functorial} and Theorem \ref{thm:fully-faithful}), which says that
\begin{quote}
the center $\FZ$ defines a fully faithful functor from the category of indecomposable multi-fusion categories with morphisms given by the equivalence classes of bimodules to the category of braided fusion categories with morphisms given by the equivalence classes of closed monoidal bimodules (see Definition \ref{def:monoidal-modules}).
\end{quote}
The functoriality of $\FZ$,\footnote{A similar functoriality of the center was proved in the framework of 2d TQFT and 2d rational conformal field theory in \cite{dkr1,dkr2}.} first conjectured in \cite{kong-icmp}, actually holds for much more general monoidal categories than fusion categories. In Section \ref{sec:fun-center}, we prove it for indecomposable multi-tensor categories over an algebraically closed field $k$. Using results in \cite{eno2008,eno2009,dmno} and the functoriality of $\FZ$, one can easily prove the fully-faithfulness of $\FZ$ (see Theorem 5.6 in \cite{kong-wen-zheng}). It is, however, much more desirable to find a new proof of the fully-faithfulness that has a clear physical meaning. One of the main goals of this work is to provide such a physics-inspired proof.
}

It is well known in classical algebra that the notion of center is not functorial unless we consider only simple algebras. Recent developments in the mathematical theory of 2d rational conformal field theories \cite{dkr1,dkr2} and that of topological orders in condensed matter physics \cite{kitaev-kong,kong-icmp,kong-wen-zheng}, however, suggest that this rather trivial functoriality of the center for simple algebras becomes highly non-trivial for the categorical analogue of simple algebras, if we also modify the morphisms in the target category properly.

In this paper, we prove that the notion of center (or Drinfeld center) defines a symmetric monoidal functor $\FZ$ from the category $\mtc$ of indecomposable multi-tensor categories over a field $k$ (see Definition \ref{def:k-linear-monoidal}) with morphisms given by bimodules to the category $\bmtc$ of braided tensor categories with morphisms given by monoidal bimodules (see Definition \ref{def:monoidal-modules}). More precisely, for $\CC, \CD\in \mtc$ and a $\CC$-$\CD$-bimodule $\CM$, the center functor $\FZ$ is defined by $\CC \mapsto \FZ(\CC)$ and $\CM\mapsto \FZ(\CM)$, where $\FZ(\CM):=\Fun_{\CC|\CD}(\CM,\CM)$ is the category of $\CC$-$\CD$-bimodule functors.

A multi-tensor category can be regarded as the categorical analogue of a semisimple algebra as illustrated in the following dictionary (see Definition\,\ref{def:k-linear-monoidal}).
\begin{center}
\begin{tabular}{|c||c|}
\hline
finite monoidal category & finite-dimensional algebra \\
\hline
multi-tensor category & semisimple algebra \\
\hline indecomposable multi-tensor category & simple algebra \\
\hline
\end{tabular}
\end{center}
For our purpose, we need to develop the theory of multi-tensor categories in many directions.
In particular, we will study the theory of the tensor product over a monoidal category in details in Section \ref{sec:tensor-prod}-\ref{sec:mtc} and that of the tensor product over a braided multi-tensor category in Section \ref{sec:relative-over-BTC}-\ref{sec:rigidity}. After all these preparations, in Section \ref{sec:fun-center}, we prove that $\FZ: \mtc\to\bmtc$ is a well-defined symmetric monoidal functor.

\void{
In particular, we generalize some facts in classical algebras to finite monoidal categories. Let $A$, $B$ be finite-dimensional algebras over $k$ and $M$ a finite-dimensional $A$-$B$-bimodule. We have the following facts in classical algebras:
\bnu
\item If $A,B$ are semisimple, then $M$ has both a left dual and a right dual.
\item If $M$ is invertible, then the action of $A,B$ induces isomorphisms of algebras $A\simeq \Hom_{B^\rev}(M,M)$, $B^\rev\simeq \Hom_A(M,M)$, where $B^\rev$ is the opposite algebra of $B$. Moreover, $A$ and $B$ share the same center.
\item Conversely, if $A$ and $B$ are semisimple and the action of $A,B$ induces isomorphisms $A\simeq \Hom_{B^\rev}(M,M)$ and $B^\rev\simeq \Hom_A(M,M)$, then $M$ is invertible.
\item If $A$ is simple, then every algebra homomorphism $f: A \to B$ splits as
$A \xrightarrow{\Id_A \otimes 1} A \otimes_{Z(A)} \Hom_{A|B}(B,B) \simeq B$.
\enu
We recall their analogues for (rigid) finite monoidal categories in Section \ref{sec:morita} and in Theorem \ref{thm:fun=ph-mor}.
}

For applications of this center functor in topological orders in condensed matter physics \cite{lw-mod,kitaev-kong,kong-wen-zheng},
one needs to consider semisimple multi-tensor categories, also called multi-fusion categories \cite{eno2005}. Let $\mfus$ be the subcategory of $\mtc$ consisting of indecomposable multi-fusion categories and semisimple bimodules and $\bfus$ the subcategory of $\bmtc$ consisting of nondegenerate braided fusion categories and closed multi-fusion bimodules (see Definition \ref{def:monoidal-modules} and Definition \ref{def:mf-bimodule}).  In Section \ref{sec:fully-faithful}, we prove that the center functor $\FZ: \mfus \to \bfus$ is fully faithful. Our proof of the fully-faithfulness is inspired by the physical intuition of the boundary-bulk relation in topological orders \cite{kong-wen-zheng}. Mathematically, it amounts to the following logical steps: (1) for a closed multi-fusion $\FZ(\CC)$-$\FZ(\CD)$-bimodule $\CE$, we show that $\CC\boxtimes_{\FZ(\CC)} \CE \boxtimes_{\FZ(\CD)} \CD^\rev$ is a multi-fusion category with a trivial center; (2)
a multi-fusion category with a trivial center is equivalent to the category $\Fun_\bk(\CM,\CM)$, where $\bk$ is the category of finite dimensional vector spaces over a field $k$ and $\CM$ is certain semisimple finite module category over $\bk$ (see Definition\,\ref{def:finite-module}); (3) $\CE$ determines a unique $\CC$-$\CD$-bimodule structure on $\CM$; (4) $\CE \mapsto \CM$ gives the inverse map of $\FZ$ on morphisms. This completes the proof.

The fully-faithfulness of $\FZ$ immediately implies two important known results in fusion categories: (1) two fusion categories $\CC$ and $\CD$ are Morita equivalent if and only if $\FZ(\CC)\simeq^{br} \FZ(\CD)$ \cite{mueger0,eno2008}; (2) there is a group isomorphism between the group of the equivalence classes of semisimple invertible $\CC$-$\CC$-bimodules and the group of the equivalence classes of braided auto-equivalences of $\FZ(\CC)$ \cite{eno2009}. As byproducts, our proof of the fully-faithfulness of $\FZ$ provides new proofs to the above two results and also slightly generalize them.

\void{
\begin{figure}[tb]
 \begin{picture}(150, 100)
   \put(20,10){\scalebox{2}{\includegraphics{pic-lw-defect-duality.pdf}}}
   \put(-75,-55){
     \setlength{\unitlength}{.75pt}\put(-18,-19){
     \put(125, 108)       { $\CC$}
     \put(295, 108)       { $\CD$}
     \put(213,116)      { $ a $}
     \put(160, 160)    { $\FZ(\CC)$}
     \put(240, 160)    { $\FZ(\CD)$}
     \put(210,215)     { $\CE$}
     }\setlength{\unitlength}{1pt}}
  \end{picture}
\caption{The 1d topological orders on the boundary are given by fusion categories $\CC$ and $\CD$, and their 2d bulk phases are given by $\FZ(\CC)$ and $\FZ(\CD)$. The closed domain wall between $\FZ(\CC)$ and $\FZ(\CD)$ is given by a monoidal $\CC$-$\CD$-bimodule $\CE$ (see Definition \ref{def:monoidal-modules}). The defect junction of $\CC,\CD,\CE$ is a single particle-like topological defect which is given by an object $a$ in a $\CC$-$\CD$-bimodule $\CM$.
}
\label{fig:defect-duality}
\end{figure}
}

Besides our main results, a few notable new results are the explicit equivalences between different realizations of the tensor product over a monoidal category (Theorem \ref{thm:tensor-prod}, Corollary \ref{cor:tensor-prod}, Proposition \ref{prop:tensor-prod2}),
the criteria of rigidity (Proposition \ref{prop:rigidity}, Corollary \ref{cor:rigidity}), an alternative and equivalent definition of a monoidal functor (Theorem \ref{thm:fun=ph-mor}) and Theorem \ref{thm:closed-bimodule}.

In physics, this fully faithful center functor $\FZ$ gives the precise mathematical description of the boundary-bulk relation for anomaly-free 2+1D topological orders with gapped boundaries \cite{kong-wen-zheng}.

\medskip
\noindent {\bf Acknowledgement}: LK was supported by NSFC under Grant No. 11071134 and by the Center of Mathematical Sciences and Applications at Harvard University, and by the start fund from Yau Mathematical Science Center at Tsinghua University, and is currently supported by the Science, Technology and Innovation Commission of Shenzhen Municipality (Grant Nos. ZDSYS20170303165926217 and JCYJ20170412152620376) and Guangdong Innovative and Entrepreneurial Research Team Program (Grant No. 2016ZT06D348). HZ is supported by NSFC under Grant No.11131008.

\section{Preliminaries}

Given left modules $\CM,\CN$ over a monoidal category $\CC$, we use $\Fun_\CC(\CM,\CN)$ to denote the category of $\CC$-module functors that {\it preserve finite colimits} throughout this paper. We remind readers that a functor between abelian categories preserves finite colimits if and only if it is right exact.

\subsection{Rigid monoidal categories}
In this subsection, we recall some basic facts about the rigidity of a monoidal category and set our notations.

\smallskip
Let $\CC$ be a monoidal category. We say that an object $a\in\CC$ is {\it left dual} to an object $b\in\CC$ and $b$ is {\it right dual} to $a$, if there are a {\it unit}, i.e. a morphism $u:\one\to b\otimes a$, and a {\it counit}, i.e. a morphism $v:a\otimes b\to\one$, such that the composed morphisms
$$a\simeq a\otimes\one \xrightarrow{\Id_a\otimes u} a\otimes b\otimes a \xrightarrow{v\otimes\Id_a} \one\otimes a \simeq a$$
$$b\simeq \one\otimes b \xrightarrow{u\otimes\Id_b} b\otimes a\otimes b \xrightarrow{\Id_b\otimes v} b\otimes\one \simeq b$$
are identity morphisms. We also denote $a=b^L$ and $b=a^R$.

We say that $\CC$ is {\it rigid}, if every object has both a left dual and a right dual. In this case, taking the left dual determines an equivalence
$$\delta^L:\CC\to\CC^\op, \quad\quad a \mapsto a^L,$$
and taking the right dual determines an equivalence
$$\delta^R:\CC\to\CC^\op, \quad\quad a \mapsto a^R,$$

\void{
\begin{rem}
Our convention of the left/right duals is compatible with that of the left/right adjoint functors. More precisely, the assignment $a \mapsto a\otimes -$ defines a monoidal functor from $\CC$ to the category $\funend(\CC)$ of endo-functors. The functor $a\otimes - \in \funend(\CC)$ is left adjoint to $a^R \otimes -$ and right adjoint to $a^L\otimes -$.
\end{rem}

If $\CC$ is rigid, taking the left dual, i.e. $a \mapsto a^L$, determines a functor $\delta^L:\CC\to\CC^\op$ which is unique up to unique isomorphism. Similarly, the operation of taking right dual determines a functor $\delta^R:\CC\to\CC^\op$ which is also unique up to unique isomorphism. Since both $b^L\otimes a^L$ and $(a\otimes b)^L$ represent the functor $\Hom_\CC(\one,a\otimes b\otimes-)$, $b^L\otimes a^L$ is canonically isomorphic to $(a\otimes b)^L$, which promotes $\delta^L$ to a monoidal equivalence $\CC^\rev\simeq\CC^\op$. Similarly, $\delta^R$ gives another monoidal equivalence $\CC^\rev \simeq \CC^\op$.
}

\begin{rem} \label{rem:rigid-(co)limits}
Let $\CC$ be a rigid monoidal category and $(\CM, \odot)$ (or $\CM$) a left $\CC$-module \cite{ostrik}. Then the functor $a\odot-:\CM\to\CM$ is left adjoint to $a^R\odot-$ and right adjoint to $a^L\odot-$ for $a\in\CC$. Therefore, the action $\odot:\CC\times\CM\to\CM$ preserves all limits and colimits in the second variable. In particular, the tensor product $\otimes:\CC\times\CC\to\CC$ preserves all limits and colimits separately in each variable.
\end{rem}

Given a left module $\CM$ over a rigid monoidal category $\CC$, the opposite category $\CM^\op$ admits two natural right $\CC$-module structures, which are denoted by $\CM^{\op|L}$ and $\CM^{\op|R}$, respectively. More precisely, for $x\in \CM$, $a\in \CC$, we have
\begin{align}
(\CM^{\op|L}, \odot^L): \quad\quad & x\odot^L a:=a^L\odot x,  \nn
(\CM^{\op|R}, \odot^R): \quad\quad & x\odot^R a := a^R \odot x. \nonumber
\end{align}

\void{
Similarly, a right $\CC$-module $\CN$ is automatically a left $\CC^\rev$-module. The category $\CN^\op$ is automatically a right $\CC^\op$-module. If $\CC$ is rigid, there are two canonical left $\CC$-module structures $(\CN^{L|\op},\odot^L)$ and $(\CN^{R|\op},\odot^R)$ induced from the monoidal equivalences $\delta^L$ and $\delta^R$, respectively. More precisely, we have
\begin{align}
(\CN^{L|\op}, \odot^L): \quad\quad & a\odot^L y := y \odot a^L, \nn
(\CM^{R|\op}, \odot^R): \quad\quad & a\odot^R y := y \odot a^R \nonumber
\end{align}
for $y\in \CN, a\in \CC$. The right $\CC$-module structures on both $(\CN^{L|\op})^{\op|R}$ and $(\CN^{R|\op})^{\op|L}$ agree with that on $\CN$. Similarly, for a left $\CC$-module $\CM$, the left $\CC$-module structures on $(\CM^{\op|L})^{R|\op}$ and $(\CM^{\op|R})^{L|\op}$ coincide with that on $\CM$.

Let $\CC$ and $\CD$ be monoidal categories and $\CP$ a $\CC$-$\CD$-bimodule.
There are four natural $\CD$-$\CC$-bimodule structures on $\CP^\op$ defined as follows:
\begin{align}
\CP^{L|\op|L}: \, d\odot^L p \odot^L c := c^L \odot p \odot d^L, \quad\quad
\CP^{L|\op|R}: \,  d\odot^L p \odot^R c := c^R \odot p \odot d^L,  \nn
\CP^{R|\op|L}: \, d\odot^R p \odot^L c := c^L \odot p \odot d^R, \quad\quad
\CP^{R|\op|R}: \,  d\odot^R p \odot^R c := c^R \odot p \odot d^R.\nonumber
\end{align}
}

Let $\CC$ be a monoidal category. We use $\Alg(\CC)$ to denote the category of algebras in $\CC$. Given algebras $M,N$ in $\CC$, we use $\LMod_M(\CC)$, $\RMod_N(\CC)$ and $\BMod_{M|N}(\CC)$ to denote the category of left $M$-modules, right $N$-modules and $M$-$N$-bimodules in $\CC$, respectively. Note that $\LMod_N(\CC)$ is automatically a right $\CC$-module and that $\RMod_N(\CC)$ is a left $\CC$-module.

\begin{rem} \label{rem:lm-mod}
Let $\CC$ be a rigid monoidal category, and let $M$ be an algebra in $\CC$. Given a right $M$-module $V\in\RMod_M(\CC)$, the action $V\otimes M\to V$ gives arise to a morphism $M\otimes V^R\to V^R$ which endows $V^R$ with the structure of a left $M$-module. Therefore, the functor $\delta^R$ induces an equivalence $\RMod_M(\CC)\simeq\LMod_M(\CC)^{R|\op}$ of left $\CC$-modules.

Similarly, given a right $M$-module $V\in\RMod_M(\CC)$, the action $V\otimes M\to V$ induces a morphism $M^{LL} \otimes V^L \to V^L$ which endows $V^L$ with the structure of a left $M^{LL}$-module. Therefore, the functor $\delta^L$ induces an equivalence $\RMod_M(\CC)\simeq\LMod_{M^{LL}}(\CC)^{L|\op}$ of left $\CC$-modules.
\end{rem}

\void{
\begin{rem} \label{rem:adjoint}
Let $\CC$ be a rigid monoidal category and $F:\CM\to\CN$ a module functor between two left $\CC$-modules. If the underlying functor $F$ has a right adjoint $G:\CN\to\CM$, then $G$ automatically admits the structure of a module functor. Actually, we have $\Hom_\CM(x,G(a\otimes y)) \simeq \Hom_\CN(F(x),a\otimes y) \simeq \Hom_\CN(a^L\otimes F(x),y) \simeq \Hom_\CN(F(a^L\otimes x),y) \simeq \Hom_\CM(a^L\otimes x,G(y)) \simeq \Hom_\CM(x,a\otimes G(y))$ which determines the desired isomorphism $G(a\otimes y)\simeq a\otimes G(y)$. It follows that the monoidal category $\Fun_\CC(\CM,\CM)$ is rigid if and only if every functor $F\in\Fun_\CC(\CM,\CM)$ admits both a left adjoint and a right adjoint in $\Fun(\CM,\CM)$.
\end{rem}
}

\begin{defn}
Let $\CC$ be a monoidal category and $\CM$ a left $\CC$-module. Given objects $x,y\in\CM$, we define an object $[x,y]_\CC\in\CC$ (or $[x,y]$ for simplicity), if exists, by the mapping property
$$
\Hom_\CC(a,[x,y])\simeq\Hom_\CM(a\odot x,y),
$$
and refer to it as the {\it internal hom} between $x$ and $y$. We say that $\CM$ is {\it enriched in $\CC$}, if $[x,y]$ exists for every pair of objects $x,y\in\CM$.
\end{defn}

\begin{rem} \label{rem:inhom}
If $\CM$ is enriched in $\CC$ and if $a,b\in\CC$ have left duals, then we have a canonical isomorphism for $x,y\in\CM$
\be \label{eq:a[xy]b}
a\otimes[x,y]\otimes b^L\simeq[b\odot x,a\odot y].
\ee
Indeed, we have
$\Hom_\CC(c,a\otimes[x,y]\otimes b^L)
\simeq \Hom_\CC(a^L\otimes c\otimes b,[x,y])
\simeq \Hom_\CM(a^L\odot c\odot b\odot x,y)
\simeq \Hom_\CM(c\odot b\odot x,a\odot y)
\simeq \Hom_\CC(c,[b\odot x,a\odot y])
$ for $c\in\CC$.
\end{rem}

\begin{rem}
The identity morphism $\Id_x:x\to x$ induces a morphism $\one\to[x,x]$. Moreover, the natural map $\Hom_\CC(a,[x,y])\times\Hom_\CC(b,[y,z]) \simeq \Hom_\CM(a\odot x,y)\times\Hom_\CM(b\odot y,z) \to \Hom_\CM(b\odot a\odot x,z) \simeq \Hom_\CC(b\otimes a,[x,z])$, determines a canonical morphism $[y,z]\otimes[x,y]\to[x,z]$ if we set $a=[x,y]$ and $b=[y,z]$. It follows that $[x,x]$ is an algebra in $\CC$ and $[x,y]$ is a right module over $[x,x]$ (see for example \cite{ostrik}).
\end{rem}


\begin{lem} \label{lem:int-hom}
Let $\CC$ be a rigid monoidal category that admits coequalizers, and $\CM=\RMod_M(\CC)$ for some algebra $M\in\Alg(\CC)$. Then $\CM$ is enriched in $\CC$ and we have $[x,y] \simeq (x\otimes_M y^R)^L$ for $x,y\in\CM$.
\end{lem}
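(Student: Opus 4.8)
The plan is to verify the defining mapping property of the internal hom directly, namely that for every $a\in\CC$ there is an isomorphism $\Hom_\CC(a,(x\otimes_M y^R)^L)\simeq\Hom_{\RMod_M(\CC)}(a\odot x,y)$, natural in $a$. Here the left $\CC$-module structure on $\CM=\RMod_M(\CC)$ is the one induced from the tensor product of $\CC$ on the left (which is compatible with the right $M$-action, $M$ acting from the right), so $a\odot x=a\otimes x$ as an object of $\CC$ equipped with the obvious right $M$-module structure.

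The key steps, in order. First I would use rigidity to rewrite the target: $\Hom_\CC(a,(x\otimes_M y^R)^L)\simeq\Hom_\CC(x\otimes_M y^R,a^L)$, by the defining adjunction of the left dual (equivalently, by the equivalence $\delta^L:\CC\to\CC^\op$). Next, I would express the relative tensor product $x\otimes_M y^R$ as a coequalizer in $\CC$ of the pair of maps $x\otimes M\otimes y^R\rightrightarrows x\otimes y^R$ coming from the right $M$-action on $x$ and the left $M$-action on $y^R$ (the latter being the $M$-module structure on $y^R$ described in Remark \ref{rem:lm-mod}, obtained by adjointing the right $M$-action on $y$ along $\delta^R$); this coequalizer exists because $\CC$ admits coequalizers, and $\otimes$ preserves them by Remark \ref{rem:rigid-(co)limits}. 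Applying $\Hom_\CC(-,a^L)$ turns this coequalizer into an equalizer, so $\Hom_\CC(x\otimes_M y^R,a^L)$ is the equalizer of $\Hom_\CC(x\otimes y^R,a^L)\rightrightarrows\Hom_\CC(x\otimes M\otimes y^R,a^L)$. Then I would dualize again inside the hom-sets: $\Hom_\CC(x\otimes y^R,a^L)\simeq\Hom_\CC(a\otimes x,y^{RL})\simeq\Hom_\CC(a\otimes x,y)$ using $y^{RL}\simeq y$, and similarly $\Hom_\CC(x\otimes M\otimes y^R,a^L)\simeq\Hom_\CC(a\otimes x\otimes M,y)$; one checks the two parallel arrows become precisely post-composition with the right $M$-action on $y$ and the right $M$-action on $a\otimes x=a\odot x$ (both maps out of $a\otimes x\otimes M$). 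Hence the equalizer is exactly the set of $M$-module morphisms $a\odot x\to y$, i.e. $\Hom_{\RMod_M(\CC)}(a\odot x,y)$. Chasing the naturality in $a$ through these canonical isomorphisms gives the claim, and existence of $[x,y]$ for all $x,y$ shows $\CM$ is enriched in $\CC$.

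The main obstacle I anticipate is bookkeeping rather than anything deep: one must carefully match up the two arrows of the coequalizer presentation of $\otimes_M$ with the two arrows defining an $M$-module morphism after passing through three successive duality adjunctions, and in particular confirm that the $M$-module structure on $y^R$ from Remark \ref{rem:lm-mod} dualizes back to the original right $M$-action on $y$ under the identification $y^{RL}\simeq y$. It is also worth being explicit that all the $\Hom$-set manipulations are legitimate because every functor in sight ($a\otimes-$, $-\otimes b$, $(-)^L$, $(-)^R$) is an equivalence or has the relevant adjoint by rigidity, so no finiteness or exactness hypothesis beyond the stated existence of coequalizers is needed. Once the arrows are matched, the rest is formal.
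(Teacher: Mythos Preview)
Your proposal is correct and is essentially the same argument as the paper's proof, only written out in greater detail. The paper compresses the whole chain into the two isomorphisms $\Hom_\CC(a,(x\otimes_M y^R)^L)\simeq\Hom_\CC(a\otimes x\otimes_M y^R,\one)\simeq\Hom_\CM(a\otimes x,y)$, relying on the reader to supply the coequalizer--equalizer passage and the matching of the two parallel arrows; your detour through $a^L$ rather than $\one$ is an inessential variation of the same adjunction.
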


\begin{proof}
We have $\Hom_\CC(a,(x\otimes_M y^R)^L) \simeq \Hom_\CC(a\otimes x\otimes_M y^R,\one) \simeq \Hom_\CM(a\otimes x,y)$, where we have implicitly used the fact that the tensor functor $\otimes$ preserves colimits (recall Remark \ref{rem:rigid-(co)limits}).
\end{proof}

\begin{thm} \label{thm:reconst}
Let $\CC$ be a rigid monoidal category that admits coequalizers, and let $\CM$ be a left $\CC$-module that admits coequalizers. Then $\CM\simeq\RMod_M(\CC)$ for some algebra $M\in\Alg(\CC)$ if and only if the following conditions are satisfied.
\begin{enumerate}
\item $\CM$ is enriched in $\CC$.
\item There is an object $P\in\CM$ such that the functor $[P,-]:\CM\to\CC$ is conservative and preserves coequalizers.
\end{enumerate}
In this case, the functor $[P,-]$ induces an equivalence $\CM\simeq\RMod_{[P,P]}(\CC)$.
\end{thm}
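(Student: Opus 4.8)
The plan is to recognize Theorem \ref{thm:reconst} as a Barr--Beck / Eilenberg--Watts type reconstruction statement and to prove it by exhibiting the internal-hom functor $[P,-]$ as an equivalence onto the category of right modules over the algebra $[P,P]$. One direction is easy: if $\CM \simeq \RMod_M(\CC)$, then Lemma \ref{lem:int-hom} shows $\CM$ is enriched in $\CC$ (condition (1)), and one takes $P := M$ viewed as a right module over itself. For this $P$ one computes $[P,y] \simeq (M \otimes_M y^R)^L \simeq (y^R)^L \simeq y$ (using rigidity, $(y^R)^L \simeq y$), so $[P,-]$ is, up to the equivalence $\CM \simeq \RMod_M(\CC)$, essentially the forgetful functor $\RMod_M(\CC) \to \CC$ composed with suitable duality; in particular it is conservative and preserves coequalizers because the forgetful functor does (right modules are defined by a reflexive coequalizer diagram, and $\otimes$ preserves colimits by Remark \ref{rem:rigid-(co)limits}). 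This gives condition (2).

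For the converse, suppose (1) and (2) hold. The algebra $A := [P,P] \in \Alg(\CC)$ is well-defined by the remark preceding Lemma \ref{lem:int-hom}, and for each $y \in \CM$ the object $[P,y]$ carries a canonical right $A$-module structure via the composition morphism $[P,y] \otimes [P,P] \to [P,y]$; thus $[P,-]$ refines to a functor $\Phi \colon \CM \to \RMod_A(\CC)$. I would then produce a left adjoint $\Psi$ to $\Phi$ by the formula $\Psi(V) = V \otimes_A P$, where the relative tensor product over $A$ is the coequalizer of $V \otimes A \otimes P \rightrightarrows V \otimes P \to V \odot P$ (module action on $V$, and $A = [P,P]$ acting on $P$ through the evaluation $[P,P] \odot P \to P$); this coequalizer exists since $\CM$ admits coequalizers, and $\odot$ preserves them by Remark \ref{rem:rigid-(co)limits}. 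The unit of the adjunction on $V = A$ is an isomorphism $A \xrightarrow{\sim} [P, A \otimes_A P] \simeq [P,P]$, and I must check that $\Phi$ sends coequalizers to coequalizers (this is condition (2)) so that the unit $V \to \Phi\Psi(V)$, being an isomorphism for $V = A$ and compatible with the colimit presentation $A^{\oplus} \rightrightarrows A^{\oplus} \to V$ of a general right $A$-module, is an isomorphism for all $V$; here I also use that every object of $\RMod_A(\CC)$ is a reflexive coequalizer of free modules and that $\Psi$, being a left adjoint, preserves coequalizers.

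It then remains to show the counit $\varepsilon_M \colon \Psi\Phi(M) = [P,M] \otimes_A P \to M$ is an isomorphism for every $M \in \CM$. By conservativity of $[P,-]$ (condition (2)) it suffices to show $[P,\varepsilon_M]$ is an isomorphism in $\CC$. Applying $[P,-]$ to the coequalizer presentation of $[P,M]\otimes_A P$ and using that $[P,-]$ preserves coequalizers, one reduces to the identity $[P, a \odot P] \simeq a \otimes [P,P]$ for $a \in \CC$ — a special case of the formula in Remark \ref{rem:inhom} (with the roles arranged so that $[P, a \odot P] \simeq a \otimes [P,P]$, valid since $\one$ admits a left dual trivially and one can bootstrap from dualizable $a$ to all $a$ using that both sides preserve colimits in $a$) — after which $[P,\varepsilon_M]$ becomes the canonical comparison $[P,M] \otimes_A [P,P] \xrightarrow{\sim} [P,M]$, which is an isomorphism since $[P,M]$ is already a right $A$-module. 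Hence $\Phi$ and $\Psi$ are mutually inverse equivalences, and $\CM \simeq \RMod_{[P,P]}(\CC)$.

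The main obstacle I anticipate is the bookkeeping around the relative tensor product $-\otimes_A-$ and the precise form of the internal-hom formulas: one must verify carefully that $[P,-]$ commutes with the relevant reflexive coequalizers (this is exactly where the hypothesis "preserves coequalizers" is used, and one should confirm it suffices to have reflexive ones, which it does here), and that the module structures and the canonical morphisms $[y,z]\otimes[x,y]\to[x,z]$, $a \otimes [x,y] \otimes b^L \simeq [b\odot x, a\odot y]$ are compatible in the way needed to identify unit and counit with the expected comparison maps. The rigidity of $\CC$ and Remark \ref{rem:rigid-(co)limits} are what make all the colimit-exchange arguments go through, so the conceptual content is light; the risk is purely in getting the variances and the side on which $A$ acts consistent throughout.
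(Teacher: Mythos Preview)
Your argument is essentially correct, but it takes a different route from the paper. The paper dispatches the sufficiency direction in two lines by invoking the Barr--Beck theorem directly: the functor $G=[P,-]$ is right adjoint to $F=-\odot P$, conservative, and preserves coequalizers, so $\CM$ is monadic over $\CC$; then rigidity gives $G\circ F = [P,-\odot P]\simeq -\otimes[P,P]$ via Remark~\ref{rem:inhom}, identifying the monad with the one for right $[P,P]$-modules. You instead construct the comparison functor $\Phi=[P,-]$ and its would-be inverse $\Psi=-\otimes_A P$ explicitly and verify the triangle identities by hand. This is precisely a proof of Barr--Beck in this special case, so what you gain is self-containment, at the cost of length and some bookkeeping; the paper's approach buys brevity by treating monadicity as a black box.

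Two small points to tighten. First, your presentation ``$A^{\oplus}\rightrightarrows A^{\oplus}\to V$'' is not available in a general monoidal category (no direct sums are assumed); use instead the bar resolution $V\otimes A\otimes A\rightrightarrows V\otimes A\to V$, a reflexive coequalizer in $\RMod_A(\CC)$, and check the unit is an isomorphism on all free modules $c\otimes A$ (not just $A$ itself), which follows from $[P,c\odot P]\simeq c\otimes[P,P]$. Second, that last isomorphism needs no bootstrapping: since $\CC$ is rigid, every $a\in\CC$ has a left dual, so Remark~\ref{rem:inhom} with $b=\one$ gives $a\otimes[P,P]\simeq[P,a\odot P]$ outright.
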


\begin{proof}
Necessity. $(1)$ is given by Lemma \ref{lem:int-hom}. The object $M$ satisfies $(2)$ because $[M,-]$ is just the forgetful functor, which is obviously conservative and preserves colimits.

Sufficiency. By definition, the functor $F=-\odot P:\CC\to\CM$ is left adjoint to the functor $G=[P,-]:\CM\to\CC$.
Applying the Barr-Beck theorem, we see that the functor $G$ exhibits $\CM$ monadic over $\CC$.
Since $\CC$ is rigid, we have $G\circ F = [P,-\odot P] \simeq - \otimes [P,P]$ by Remark \ref{rem:inhom}. Therefore, we conclude that $\CM\simeq\RMod_{[P,P]}(\CC)$.
\end{proof}

\void{
\begin{rem}
When $\CC$ is the category of finite-dimensional vector spaces over a field, Condition $(2)$ in the above theorem means that there is a projective object $P\in\CM$ such that $\Hom_\CM(P,x)\simeq0$ only if $x\simeq0$.
\end{rem}
}

\begin{rem}\label{rem:reconst}
From the proof we see that the sufficiency of Theorem \ref{thm:reconst} still holds if we drop the rigidity of $\CC$ but require in Condition $(2)$ that the canonical morphism $-\otimes[P,P] \to [P,-\odot P]$ is an isomorphism.
\end{rem}

\begin{rem} \label{rem:adjoint}
Let $\CC$ be a rigid monoidal category and $F:\CM\to\CN$ a $\CC$-module functor between two left $\CC$-modules. If the underlying functor $F$ has a right adjoint $G:\CN\to\CM$, then $G$ automatically admits the structure of a module functor. Actually, we have $\Hom_\CM(x,G(a\otimes y)) \simeq \Hom_\CN(F(x),a\otimes y) \simeq \Hom_\CN(a^L\otimes F(x),y) \simeq \Hom_\CN(F(a^L\otimes x),y) \simeq \Hom_\CM(a^L\otimes x,G(y)) \simeq \Hom_\CM(x,a\otimes G(y))$ which determines the desired isomorphism $G(a\otimes y)\simeq a\otimes G(y)$. It follows that the monoidal category $\Fun_\CC(\CM,\CM)$ is rigid if and only if every functor $F\in\Fun_\CC(\CM,\CM)$ admits both a left adjoint and a right adjoint in $\Fun(\CM,\CM)$.
\end{rem}

\subsection{Tensor product of module categories} \label{sec:tensor-prod}

The finite-colimit-preserving condition in the following definition is inspired by Deligne-Kelly tensor product \cite{Kelly,deligne} and Lurie's tensor product of presentable $\infty$-categories \cite{lurie2} (see similar definitions in \cite{tam,eno2009,bbj}).

\begin{defn}
Let $\CC$ be a monoidal category, $\CM$ a right $\CC$-module and $\CN$ a left $\CC$-module. Suppose that the categories $\CC,\CM,\CN$ admit finite colimits and that the functors $\otimes:\CC\times\CC\to\CC$, $\odot:\CM\times\CC\to\CM$ and $\odot:\CC\times\CN\to\CN$ preserve finite colimits separately in each variable.

A {\it balanced $\CC$-module functor} is a functor $F:\CM\times\CN\to\CD$ equipped with an isomorphism $F\circ(\odot\times\Id_\CN) \simeq F\circ(\Id_\CM\times\odot):\CM\times\CC\times\CN\to\CD$ satisfying the following conditions:
\begin{itemize}
\item $F$ preserves finite colimits separately in each variable.
\item For $a,b\in\CC$, $x\in\CM$, $y\in\CN$, the evident diagrams
$$
\xymatrix{
  & F(x,y) \ar[ld]_\simeq \ar[rd]^\simeq \\
  F(x\odot\one,y) \ar[rr]^\simeq & & F(x,\one\odot y) \\
}
$$
$$
\xymatrix{
  F((x\odot a)\odot b,y) \ar[r]^\simeq \ar[d]^\simeq & F(x\odot a,b\odot y) \ar[r]^\simeq & F(x,a\odot(b\odot y)) \ar[d]^\simeq \\
  F(x\odot(a\otimes b),y) \ar[rr]^\simeq && F(x,(a\otimes b)\odot y) \\
}
$$
are commutative.
\end{itemize}
We use $\Fun^{bal}_\CC(\CM,\CN;\CD)$ to denote the category of balanced $\CC$-module functors $F:\CM\times\CN\to\CD$.

The {\it tensor product} of $\CM$ and $\CN$ over $\CC$ is a category $\CM\boxtimes_\CC\CN$, which admits all finite colimits, together with a balanced $\CC$-module functor $\boxtimes_\CC:\CM\times\CN\to\CM\boxtimes_\CC\CN$, such that, for every category $\CD$, which admits all finite colimits, composition with $\boxtimes_\CC$ induces an equivalence $\Fun(\CM\boxtimes_\CC\CN,\CD)\simeq\Fun^{bal}_\CC(\CM,\CN;\CD)$.
\end{defn}

\begin{rem}
It is worthwhile to remind readers the usual universal property of the tensor product as illustrated in the following commutative diagram:
$$
\xymatrix{
\CM \times \CN \ar[r]^{\boxtimes_\CC}  \ar[rd]_F & \CM \boxtimes_\CC \CN  \ar[d]^{\exists !\, \underline{F} }  \\
& \CD
}
$$
for $F\in \Fun^{bal}_\CC(\CM,\CN;\CD)$.
\end{rem}

\begin{thm} \label{thm:tensor-prod}
Let $\CC$ be a monoidal category such that $\CC$ admits finite colimits and the tensor product $\otimes:\CC\times\CC\to\CC$ preserves finite colimits separately in each variable. Let $\CM=\LMod_M(\CC)$, $\CM'=\RMod_M(\CC)$ and $\CN=\RMod_N(\CC)$ for some algebras $M,N$ in $\CC$. We have the following assertions:
\begin{enumerate}
\item The balanced $\CC$-module functor $\CM\times\CN \to \BMod_{M|N}(\CC)$ defined by $(x,y)\mapsto x\otimes y$ exhibits $\BMod_{M|N}(\CC)$ as the tensor product $\CM\boxtimes_\CC\CN$.

\item The balanced $\CC$-module functor $\CM \times \CN \to \Fun_\CC(\CM',\CN)$ defined by $(x,y) \mapsto -\otimes_M x\otimes y$ exhibits $\Fun_\CC(\CM',\CN)$ as the tensor product $\CM\boxtimes_\CC\CN$.

\end{enumerate}
\end{thm}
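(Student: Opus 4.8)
The plan is to prove (1) directly from the defining universal property of $\CM\boxtimes_\CC\CN$ using the bar resolution, and then to deduce (2) from (1) via an Eilenberg--Watts equivalence $\BMod_{M|N}(\CC)\simeq\Fun_\CC(\CM',\CN)$. For Part (1), write $T\colon\CM\times\CN\to\BMod_{M|N}(\CC)$ for the functor $(x,y)\mapsto x\otimes y$ (left $M$-action from $x$, right $N$-action from $y$). First I would check that $T$ is a balanced $\CC$-module functor: the balancing isomorphism $(x\odot a)\otimes y\simeq x\otimes(a\odot y)$ is the associator of $\CC$; $T$ preserves finite colimits separately in each variable because finite colimits in $\LMod_M(\CC)$, $\RMod_N(\CC)$ and $\BMod_{M|N}(\CC)$ are created by the forgetful functors to $\CC$ (the monads $M\otimes-$ and $-\otimes N$ preserve finite colimits, as $\otimes$ does by hypothesis), and $\otimes$ preserves finite colimits in each variable; the two coherence diagrams for $T$ reduce to the pentagon and triangle axioms. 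Next, for a category $\CD$ with finite colimits, I must show that $(-)\circ T\colon\Fun(\BMod_{M|N}(\CC),\CD)\to\Fun^{bal}_\CC(\CM,\CN;\CD)$ is an equivalence, where $\Fun$ on the left denotes finite-colimit-preserving functors. The key tool is the bar resolution: every left $M$-module $x$ is the coequalizer in $\CM$ of the reflexive pair $M\otimes M\otimes U(x)\rightrightarrows M\otimes U(x)$ (multiplication $\mu\otimes\Id$ versus action $\Id_M\otimes\lambda_x$), a diagram that becomes a split coequalizer after applying the forgetful functor to $\CC$; similarly every $z\in\BMod_{M|N}(\CC)$ is the coequalizer in $\BMod_{M|N}(\CC)$ of $M\otimes M\otimes z'\rightrightarrows M\otimes z'$, where $z'$ is $z$ with its left $M$-action forgotten, and $M\otimes z'=T(M,z')$, $M\otimes M\otimes z'=T(M\otimes M,z')$ lie in the image of $T$.

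Given this, uniqueness of the factorization is immediate: if $G$ preserves finite colimits and $G\circ T\simeq F$, then $G(z)\simeq\Coker\bigl(GT(M\otimes M,z')\rightrightarrows GT(M,z')\bigr)\simeq\Coker\bigl(F(M\otimes M,z')\rightrightarrows F(M,z')\bigr)$, so $G$ is determined. For existence I would take this last expression as the \emph{definition} of $\underline F(z)$, the two structure maps being $F(\mu,\Id_{z'})$ and $F(\Id_M,\lambda)\circ\varphi$, where $\lambda\colon M\otimes z'\to z'$ is the left $M$-action on $z$ (a morphism of right $N$-modules) and $\varphi\colon F(M\otimes M,z')\xrightarrow{\ \sim\ }F(M,M\otimes z')$ is the balancing isomorphism of $F$. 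Functoriality of $\underline F$ in $z$ is clear; $\underline F$ preserves finite colimits because $\BMod_{M|N}(\CC)\to\RMod_N(\CC)$ and $M\otimes-$ preserve them, $F(M,-)$ and $F(M\otimes M,-)$ preserve them, and finite colimits commute with the defining coequalizer. Finally $\underline F\circ T\simeq F$: unwinding the balancing isomorphisms identifies $\underline F(x\otimes y)$ with $\Coker\bigl(F(M\otimes M\otimes U(x),y)\rightrightarrows F(M\otimes U(x),y)\bigr)$, where the two maps are $F(-,y)$ applied to the bar-resolution pair of $x$; since $F(-,y)$ preserves finite colimits this coequalizer is $F(x,y)$. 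Checking that $F\mapsto\underline F$ and $G\mapsto G\circ T$ are mutually inverse up to coherent natural isomorphism completes Part (1).

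For Part (2), I claim $z\mapsto(-\otimes_M z)$ is an equivalence $\BMod_{M|N}(\CC)\xrightarrow{\ \sim\ }\Fun_\CC(\CM',\CN)$; granting it, (2) follows from (1), since this equivalence carries $T(x,y)=x\otimes y$ to $-\otimes_M(x\otimes y)\simeq -\otimes_M x\otimes y$ and is compatible with the balanced structures. To prove the claim, note $-\otimes_M z$ is a finite-colimit-preserving $\CC$-module functor; conversely, for $G\in\Fun_\CC(\CM',\CN)$ the regular module $M\in\RMod_M(\CC)$ carries a left $M$-action $M\odot M\to M$ in $\RMod_M(\CC)$ (here $\odot$ is the left $\CC$-action and the map is the multiplication, right $M$-linear by associativity), and applying the $\CC$-module functor $G$ endows $G(M)\in\RMod_N(\CC)$ with a compatible left $M$-action, i.e.\ a lift to $\BMod_{M|N}(\CC)$. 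One then checks $G\mapsto G(M)$ and $z\mapsto -\otimes_M z$ are mutually inverse by comparing them on the free right $M$-modules $c\otimes M$ (where $G(c\otimes M)\simeq c\odot G(M)$ while $(c\otimes M)\otimes_M z\simeq c\otimes z$) and extending along the bar resolution $w=\Coker(U(w)\otimes M\otimes M\rightrightarrows U(w)\otimes M)$, all functors involved preserving finite colimits. (Alternatively this is Barr--Beck for restriction along $-\otimes M\colon\CC\to\RMod_M(\CC)$, or an application of Theorem \ref{thm:reconst}.)

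The one place requiring genuine care is the identification, under the balancing isomorphism $\varphi$ and the coherence hexagons for a balanced functor, of the two maps in the coequalizer defining $\underline F(x\otimes y)$ with $F(-,y)$ applied to the two maps of the bar resolution of $x$; this is exactly what the commuting diagrams in the definition of a balanced $\CC$-module functor are for, and it is the crux of showing $\underline F\circ T\simeq F$. A secondary subtlety is that the bar resolutions used throughout are honest finite coequalizers rather than geometric realizations---this is legitimate because each becomes a split coequalizer after forgetting to $\CC$, and the forgetful functors create finite colimits since $\otimes$ preserves them.
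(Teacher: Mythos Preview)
Your proposal is correct and follows essentially the same approach as the paper. The only difference is a harmless left/right symmetry: the paper resolves a bimodule $V$ on the right as the coequalizer of $V\otimes N\otimes N\rightrightarrows V\otimes N$ and defines $G(V)$ as the coequalizer of $F(V,N\otimes N)\rightrightarrows F(V,N)$, whereas you resolve on the left via $M\otimes M\otimes z'\rightrightarrows M\otimes z'$; the identification $\underline F\circ T\simeq F$ then uses preservation of finite colimits by $F(x,-)$ in the paper's version and by $F(-,y)$ in yours. Part~(2) is deduced in both cases from the Eilenberg--Watts equivalence $\BMod_{M|N}(\CC)\simeq\Fun_\CC(\CM',\CN)$, $z\mapsto -\otimes_M z$, which the paper proves in one line by writing $F(-)\simeq F(-\otimes_M M)\simeq -\otimes_M F(M)$, the same bar-resolution argument you spell out in more detail.
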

\begin{proof}
Since any functor $F\in \Fun_\CC(\CM',\CN)$ preserves coequalizers, we have canonical isomorphisms $F(-)=F(-\otimes_M M) \simeq - \otimes_M F(M)$, where $F(M)\in \BMod_{M|N}(\CC)$. As a consequence, we have a canonical equivalence $\BMod_{M|N}(\CC)\simeq \Fun_\CC(\CM',\CN)$ defined by $z \mapsto -\otimes_M z$. Then it is clear that
(2) follows from (1). It remains to prove (1).

Our assumptions on $\CC$ and $\otimes$ imply that $\BMod_{M|N}(\CC)$ admits finite colimits. Let $F:\CM\times\CN\to\CD$ be a balanced $\CC$-module functor where $\CD$ admits finite colimits. We construct a functor $G: \BMod_{M|N}(\CC) \to \CD$ as follows. Given a bimodule $V\in\BMod_{M|N}(\CC)$, note that $V\simeq V\otimes_N N$ is the coequalizer of the evident diagram $V\otimes N\otimes N\rightrightarrows V\otimes N$. We set $G(V)$ to be the coequalizer of the evident diagram $F(V,N\otimes N)\rightrightarrows F(V,N)$. Given a bimodule map $V\to V'$, we have an induced morphism $G(V)\to G(V')$ such that $G$ is a functor. By the construction of $G$ and our assumptions on $\otimes$ and $F$, we obtain that $G$ preserves finite colimits. Thus we obtain a functor $\Fun^{bal}_\CC(\CM,\CN;\CD) \to \Fun(\BMod_{M|N}(\CC),\CD)$.

It remains to identify $F(x,y)$ with $G(x\otimes y)$ for $x\in\CM$ and $y\in\CN$. Actually, by our assumption on $F$, $G(x\otimes y)$ can be identified with the coequalizer of the evident diagram $F(x,y\otimes N\otimes N)\rightrightarrows F(x,y\otimes N)$, which is nothing but $F(x,y)$.
\end{proof}

\begin{rem} \label{rem:tensor-product-exact}
When $\CC$ is rigid, the tensor product functor $\CM \times \CN \to \BMod_{M|N}(\CC)$ defined by $(x,y)\mapsto x\otimes y$ preserves limits and colimits in each variable.
\end{rem}

\begin{rem}
Results similar to Theorem \ref{thm:tensor-prod} have appeared earlier in various frameworks (see for example \cite{eno2009,dss,lurie2,bbj,dn}).
\end{rem}

\begin{cor} \label{cor:tensor-prod}
Let $\CC$ be a rigid monoidal category which admits finite colimits. Assume $\CM=\RMod_M(\CC)$, $\CN=\RMod_N(\CC)$ for some algebras $M,N$ in $\CC$.
\begin{enumerate}
\item The balanced $\CC$-module functor $\CM^{\op|L}\times\CN \to \Fun_\CC(\CM,\CN)$ defined by
$$
(x,y)\mapsto [-,x]^R\odot y
$$
exhibits $\Fun_\CC(\CM,\CN)$ as the tensor product $\CM^{\op|L}\boxtimes_\CC\CN$.

\item We have a natural isomorphism $$\Hom_{\CM^{\op|L}\boxtimes_\CC\CN}(x\boxtimes_\CC y,x'\boxtimes_\CC y') \simeq \Hom_\CC(\one,[y,y']\otimes[x',x])$$ for $x,x'\in\CM^\op$ and $y,y'\in\CN$.

\item The formula $x\boxtimes_\CC y\mapsto y\boxtimes_\CC x$ determines an equivalence $\CM^{\op|R}\boxtimes_\CC\CN \simeq (\CN^{\op|L}\boxtimes_\CC\CM)^\op$.

\end{enumerate}
\end{cor}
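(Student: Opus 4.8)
The plan is to derive all three assertions from Theorem~\ref{thm:tensor-prod}, the rigidity bookkeeping of Remark~\ref{rem:lm-mod}, and the internal-hom formulas of Lemma~\ref{lem:int-hom} and Remark~\ref{rem:inhom}. For (1), I first produce an equivalence of right $\CC$-modules $\CM^{\op|L}\simeq\LMod_M(\CC)$ given on objects by $x\mapsto x^R$: it is an equivalence of underlying categories by Remark~\ref{rem:lm-mod}, and it is right-$\CC$-linear since $(x\odot^L a)^R=(a^L\odot x)^R=(a^L\otimes x)^R\simeq x^R\otimes(a^L)^R\simeq x^R\otimes a$. Transporting the balanced functor of Theorem~\ref{thm:tensor-prod}(2) along this equivalence (with $\LMod_M(\CC)$ in the role of the ``$\CM$'' there and $\RMod_M(\CC)=\CM$ in the role of ``$\CM'$'') identifies $\CM^{\op|L}\boxtimes_\CC\CN$ with $\Fun_\CC(\CM,\CN)$ in such a way that $x\boxtimes_\CC y$ is sent to the functor $z\mapsto z\otimes_M x^R\otimes y$. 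Finally, Lemma~\ref{lem:int-hom} gives $[z,x]\simeq(z\otimes_M x^R)^L$, hence $[z,x]^R\odot y\simeq(z\otimes_M x^R)\odot y\simeq z\otimes_M x^R\otimes y$, so the balanced functor $(x,y)\mapsto[-,x]^R\odot y$ of (1) coincides with the one just transported. This proves (1).

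For (2), I combine (1) with the identification $\Fun_\CC(\CM,\CN)\simeq\BMod_{M|N}(\CC)$, $F\mapsto F(M)$, used in the proof of Theorem~\ref{thm:tensor-prod}. Since $[M,x]\simeq(M\otimes_M x^R)^L\simeq(x^R)^L\simeq x$, the object $x\boxtimes_\CC y$ corresponds to the bimodule $x^R\otimes y$ (left $M$-action on $x^R$ via Remark~\ref{rem:lm-mod}, right $N$-action on $y$), and it remains to compute $\Hom_{\BMod_{M|N}(\CC)}(x^R\otimes y,(x')^R\otimes y')$. Writing $\BMod_{M|N}(\CC)=\LMod_M(\RMod_N(\CC))$ and using that the adjunction $(-\odot y)\dashv[y,-]$ defining the internal hom of $\RMod_N(\CC)$ is an adjunction of $\CC$-module functors, hence $M$-equivariant, together with Remark~\ref{rem:inhom}, one obtains
$$
\Hom_{\BMod_{M|N}(\CC)}(x^R\odot y,(x')^R\odot y')\simeq\Hom_{\LMod_M(\CC)}\bigl(x^R,\,(x')^R\otimes[y,y']\bigr).
$$
Presenting the left $M$-module $x^R$ by free modules (i.e. $x^R\simeq M\otimes_M x^R$) turns the right-hand side into the equalizer of two maps $\Hom_\CC(x^R,(x')^R\otimes[y,y'])\rightrightarrows\Hom_\CC(M\otimes x^R,(x')^R\otimes[y,y'])$; transporting along the adjunction $x'\otimes-\dashv(x')^R\otimes-$ and tracing the two $M$-actions through the evaluation and coevaluation morphisms identifies this equalizer with $\Hom_\CC(x'\otimes_M x^R,[y,y'])$. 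Applying Lemma~\ref{lem:int-hom} once more gives $x'\otimes_M x^R\simeq[x',x]^R$, and dualizing yields $\Hom_\CC(x'\otimes_M x^R,[y,y'])\simeq\Hom_\CC(\one,[y,y']\otimes[x',x])$, which is (2).

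Assertion (3) I obtain directly from the universal property of $\boxtimes_\CC$. The assignment $(x,y)\mapsto y\boxtimes_\CC x$ defines a functor $\CM^{\op|R}\times\CN\to(\CN^{\op|L}\boxtimes_\CC\CM)^\op$, and it is a balanced $\CC$-module functor: after unwinding $x\odot^R a=a^R\odot x$ and using $(a^R)^L\simeq a$, the required isomorphism $y\boxtimes_\CC(x\odot^R a)\simeq(a\odot y)\boxtimes_\CC x$ is exactly the balancing isomorphism $(y\odot^L a^R)\boxtimes_\CC x\simeq y\boxtimes_\CC(a^R\odot x)$ of $\CN^{\op|L}\boxtimes_\CC\CM$, and the two coherence squares follow in the same way. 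The universal property then produces a functor $\CM^{\op|R}\boxtimes_\CC\CN\to(\CN^{\op|L}\boxtimes_\CC\CM)^\op$ sending $x\boxtimes_\CC y\mapsto y\boxtimes_\CC x$; running the same construction with the two sides exchanged gives a functor in the opposite direction, and since both composites agree with the identity after precomposition with the canonical balanced functors, uniqueness in the universal property forces them to be the identities. Hence the functor is an equivalence.

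I expect (2) to be the real work. Every isomorphism in sight is transparent at the level of representable functors, but one must verify that the final identification $\Hom_{\BMod_{M|N}(\CC)}(x^R\otimes y,(x')^R\otimes y')\simeq\Hom_\CC(x'\otimes_M x^R,[y,y'])$ is compatible with, rather than merely abstractly isomorphic to, the two $M$-actions being balanced; equivalently, that the left $M$-linearity of a morphism of bimodules corresponds, under the chain of adjunction isomorphisms, precisely to the relation defining the relative tensor product $x'\otimes_M x^R$. This is a diagram chase through the evaluation and coevaluation morphisms, and matching the left and right duals with the two module actions on the right-hand side is the point at which it is easiest to slip; everything else is formal manipulation with Theorem~\ref{thm:tensor-prod}, Lemma~\ref{lem:int-hom}, and Remark~\ref{rem:inhom}.
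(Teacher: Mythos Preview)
Your argument for (1) is exactly the paper's. For (2) your approach is correct but more laborious than necessary: the paper dispatches it in one line by observing
\[
\Hom_{\BMod_{M|N}(\CC)}(x^R\otimes y,\,x'^R\otimes y')\;\simeq\;\Hom_\CC(x'\otimes_M x^R\otimes y\otimes_N y'^R,\,\one)\;\simeq\;\Hom_\CC(\one,[y,y']\otimes[x',x]),
\]
using the adjunction $x'\otimes_M(-)\dashv x'^R\otimes(-)$ between $\BMod_{M|N}(\CC)$ and $\RMod_N(\CC)$, then Lemma~\ref{lem:int-hom} for the internal hom in $\RMod_N(\CC)$, and finally one dualization. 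Your equalizer computation is a valid but roundabout way of establishing the first adjunction; the diagram chase you worry about is unnecessary once you recognize it as that single adjunction.

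Your argument for (3), however, has a gap. The universal property of $\boxtimes_\CC$ requires the target to admit finite colimits and the balanced functor to \emph{preserve} finite colimits separately in each variable; you verify the balancing isomorphism but not the colimit condition. For your functor $(x,y)\mapsto y\boxtimes_\CC x$ into $(\CN^{\op|L}\boxtimes_\CC\CM)^\op$, fixing $x$ gives a functor $\CN\to(\CN^{\op|L}\boxtimes_\CC\CM)^\op$; since the underlying category of $\CN^{\op|L}$ is $\CN^\op$, preserving finite colimits here amounts to asking that $y\mapsto y\boxtimes_\CC x:\CN^{\op|L}\to\CN^{\op|L}\boxtimes_\CC\CM$ preserve finite \emph{limits}. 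That is not supplied by the universal property, and the same issue arises in the other variable and for the inverse functor. The paper sidesteps this by working with the concrete realizations throughout: it identifies $\CM^{\op|R}\simeq\LMod_{M^{LL}}(\CC)$ via $x\mapsto x^L$ (Remark~\ref{rem:lm-mod}) and then chains equivalences
\[
\CM^{\op|R}\boxtimes_\CC\CN\;\simeq\;\BMod_{M^{LL}|N}(\CC)\;\xrightarrow{\ \delta^R\ }\;\BMod_{N|M}(\CC)^\op\;\simeq\;(\CN^{\op|L}\boxtimes_\CC\CM)^\op,
\]
tracking $x\boxtimes_\CC y\mapsto x^L\otimes y\mapsto y^R\otimes x\mapsto y\boxtimes_\CC x$. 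Your route can be repaired by noting that rigidity gives $\CC\simeq\CC^\op$, hence all categories in sight admit finite limits, and then checking the required limit-preservation via these same bimodule descriptions---but at that point you are essentially carrying out the paper's argument anyway.
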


\begin{proof}
$(1)$ By Remark \ref{rem:lm-mod}, $x\mapsto x^R$ defines a canonical equivalence of right $\CC$-modules $\CM^{\op|L}=\RMod_M(\CC)^{\op|L} \simeq \LMod_M(\CC)$. Using Theorem \ref{thm:tensor-prod} and Lemma \ref{lem:int-hom}, we obtain the following canonical equivalences:
\begin{align}
\CM^{\op|L}\boxtimes_\CC\CN &\simeq \BMod_{M|N}(\CC) \simeq \Fun_\CC(\CM, \CN) \nn
x\boxtimes_\CC y &\mapsto x^R\otimes y  \mapsto -\otimes_M x^R \otimes y=[-,x]^R \otimes y.
\label{eq:can-iso-tensor-prod}
\end{align}

$(2)$ We have $\Hom_{\CM^\op\boxtimes_\CC\CN}(x\boxtimes_\CC y,x'\boxtimes_\CC y') \simeq \Hom_{\BMod_{M|N}(\CC)}(x^R\otimes y,x'^R\otimes y') \simeq \Hom_\CC(x' \otimes_M x^R \otimes y \otimes_N y'^R, \one) \simeq \Hom_\CC(\one,[y,y']\otimes[x',x])$.

$(3)$ By Remark \ref{rem:lm-mod}, the functor $x\mapsto x^L$ defines the canonical equivalence $\CM^{\op|R} \simeq \LMod_{M^{LL}}(\CC)$. We obtain the following canonical equivalences:
$$
\CM^{\op|R}\boxtimes_\CC\CN \simeq \BMod_{M^{LL}|N}(\CC) \xrightarrow{\delta^R}
\BMod_{N|M}(\CC)^\op \simeq (\CN^{\op|L}\boxtimes_\CC\CM)^\op
$$
defined by
$x\boxtimes_\CC y \mapsto x^L \otimes y \mapsto (x^L\otimes y)^R = y^R \otimes x \mapsto y\boxtimes_\CC x.  \nonumber$ 
\end{proof}

\void{
In the setting of Corollary \ref{cor:tensor-prod}, $\CM^{\op|L}\boxtimes_\CC \CN$ has a natural structure of a $\Fun_\CC(\CN,\CN)$-$\Fun_\CC(\CM,\CM)$-bimodule, defined by
\be \label{eq:fxyg}
f\odot (x\boxtimes_\CC y) \odot^R g= g^R(x)\boxtimes_\CC f(y)
\ee for $f\in \Fun_\CC(\CN,\CN)$ and $g\in \Fun_\CC(\CM,\CM)$. So does the finite category $\Fun_\CC(\CM, \CN)$. To make the right $\Fun_\CC(\CM,\CM)$-action on $\CM^{\op|L}$ explicit, we denote it by $\CM^{R|\op|L}$.

\begin{prop} \label{prop:bimodule-eq}
In the setting of Corollary \ref{cor:tensor-prod}, the canonical equivalence $\CM^{R|\op|L} \boxtimes_\CC \CN \xrightarrow{\simeq} \Fun_\CC(\CM, \CN)$, defined by $x\boxtimes_\CC y \mapsto [-,x]^R\odot y$,
is an equivalence between $\Fun_\CC(\CN,\CN)$-$\Fun_\CC(\CM,\CM)$-bimodules.
\end{prop}
\begin{proof}
It follows from the following identities: $g^R(x) \boxtimes_\CC f(y) \mapsto
[-,g^R(x)]^R\odot f(y) \simeq f([-,g^R(x)]^R \odot y) \simeq f([g(-),x]^R\odot y) \simeq
f\circ ([-,x]^R\odot y) \circ g$.
\end{proof}
}

\begin{prop} \label{prop:tensor-prod2}
Let $\CC$ be a rigid monoidal category which admits finite colimits. Assume $\CM=\RMod_M(\CC)$, $\CN=\RMod_N(\CC)$ for some algebras $M,N$ in $\CC$.
If taking right adjoints defines an equivalence between $\Fun_\CC(\CN, \CM)^\op$ and $\Fun_\CC(\CM, \CN)$, then the balanced $\CC$-module functor $\CM^{\op|R}\times\CN \to \Fun_\CC(\CM,\CN)$ defined by $(x,y)\mapsto [x,-]\odot y$ exhibits $\Fun_\CC(\CM,\CN)$ as the tensor product $\CM^{\op|R}\boxtimes_\CC\CN$.
\end{prop}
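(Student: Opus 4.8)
The plan is to deduce the statement from Corollary \ref{cor:tensor-prod}, rather than to verify the universal property of the tensor product from scratch. The key observation is that the ingredients we are allowed to use --- parts (1) and (3) of that corollary, together with the hypothesis on right adjoints --- chain together into a single equivalence $\CM^{\op|R}\boxtimes_\CC\CN\xrightarrow{\sim}\Fun_\CC(\CM,\CN)$, and all that then remains is to track where the canonical balanced functor $\boxtimes_\CC$ goes under this equivalence.

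First I would form the composite equivalence. Corollary \ref{cor:tensor-prod}(3) gives $\CM^{\op|R}\boxtimes_\CC\CN\simeq(\CN^{\op|L}\boxtimes_\CC\CM)^\op$ by $x\boxtimes_\CC y\mapsto y\boxtimes_\CC x$. Corollary \ref{cor:tensor-prod}(1), applied with the roles of $\CM$ and $\CN$ exchanged and then passed to opposite categories, gives $(\CN^{\op|L}\boxtimes_\CC\CM)^\op\simeq\Fun_\CC(\CN,\CM)^\op$ by $y\boxtimes_\CC x\mapsto[-,y]^R\odot x$. Finally, the hypothesis gives $\Fun_\CC(\CN,\CM)^\op\simeq\Fun_\CC(\CM,\CN)$ by passing to right adjoints. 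The composite sends $x\boxtimes_\CC y$ to the right adjoint $G^{ra}$ of $G:=[-,y]^R\odot x\colon\CN\to\CM$. Thus, once I show $G^{ra}\simeq[x,-]\odot y$, the functor $(x,y)\mapsto[x,-]\odot y$ is exhibited as the composite of the canonical balanced $\CC$-module functor with an equivalence, so it is automatically a balanced $\CC$-module functor and it exhibits $\Fun_\CC(\CM,\CN)$ as the tensor product $\CM^{\op|R}\boxtimes_\CC\CN$; in particular one gets for free that $[x,-]\odot y$ lies in $\Fun_\CC(\CM,\CN)$.

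The substantive step is the computation of $G^{ra}$, which I would do by comparing mapping spaces. Fix $n\in\CN$ and $m\in\CM$. Because $\CM=\RMod_M(\CC)$ is enriched in $\CC$ (Lemma \ref{lem:int-hom}), the definition of the internal hom gives $\Hom_\CM([n,y]^R\odot x,m)\simeq\Hom_\CC([n,y]^R,[x,m])$; and since $\CC$ is rigid, unwinding the unit and counit of the duality between $[n,y]$ and $[n,y]^R$ yields $\Hom_\CC(d^R,e)\simeq\Hom_\CC(\one,e\otimes d)$, so this space becomes $\Hom_\CC(\one,[x,m]\otimes[n,y])$. On the other hand, the internal hom property with unit object $\one$ gives $\Hom_\CN(n,[x,m]\odot y)\simeq\Hom_\CC(\one,[n,[x,m]\odot y])$, and the canonical isomorphism $[n,c\odot y]\simeq c\otimes[n,y]$ of Remark \ref{rem:inhom} (with $c=[x,m]$) turns this into $\Hom_\CC(\one,[x,m]\otimes[n,y])$ as well. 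Hence $G\dashv([x,-]\odot y)$, i.e. $G^{ra}\simeq[x,-]\odot y$, which completes the argument.

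I expect the main obstacle to be purely the naturality bookkeeping in this last step: one must check that the two chains of isomorphisms are natural in $n$ and $m$ simultaneously --- so that they genuinely present $[x,-]\odot y$ as a right adjoint of $G$, and not merely agree objectwise --- which uses the naturality of the duality isomorphisms (equivalently, that $\delta^R$ is a well-defined functor) and their compatibility with the module actions in the sense demanded of a balanced $\CC$-module functor. None of this is conceptually hard, but it is the part that has to be carried out carefully, using the internal-hom calculus of Remark \ref{rem:inhom} and the duality conventions fixed earlier.
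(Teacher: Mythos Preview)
Your proof is correct and follows essentially the same route as the paper: chain Corollary~\ref{cor:tensor-prod}(3), Corollary~\ref{cor:tensor-prod}(1), and the right-adjoint hypothesis, then identify the right adjoint of $[-,y]^R\odot x$ as $[x,-]\odot y$. The only cosmetic difference is that the paper computes this adjunction by unpacking the internal homs via the explicit formula $[x,y]\simeq(x\otimes_M y^R)^L$ of Lemma~\ref{lem:int-hom}, whereas you work more intrinsically with the internal-hom calculus of Remark~\ref{rem:inhom}; both arrive at $\Hom_\CC(\one,[x,m]\otimes[n,y])$ as the common middle term.
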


\begin{proof}
By (3) in Corollary \ref{cor:tensor-prod}, we have
$$\CM^{\op|R}\boxtimes_\CC\CN \simeq (\CN^{\op|L}\boxtimes_\CC\CM)^\op \simeq \Fun_\CC(\CN,\CM)^\op \simeq \Fun_\CC(\CM,\CN)
$$
where the third equivalence is defined by taking right adjoint. Note that these equivalences map an object $x\boxtimes_\CC y \in \CM^{\op|R}\boxtimes_\CC\CN$ as follows:
$$
x\boxtimes_\CC y \mapsto y\boxtimes_\CC x \mapsto [-,y]^R\odot x \mapsto ([-,y]^R\odot x)^R
$$
where $([-,y]^R\odot x)^R$ is the right adjoint of the functor $[-,y]^R\odot x\in \Fun_\CC(\CN, \CM)$.

It remains to prove that
\begin{equation}  \label{eq:right-adjoint-fun}
([-,y]^R\odot x)^R \simeq [x,-]\odot y.
\end{equation}
Without loss of generality, we may assume $\CM=\RMod_M(\CC)$ and $\CN=\RMod_N(\CC)$ for some algebras $M,N\in \Alg(\CC)$. For $m\in \CM, n\in \CN$, we have the following canonical isomorphisms
\begin{align*}
&\Hom_\CM([n,y]^R\odot x, m)
\simeq \Hom_\CM(n\otimes_N y^R\otimes x, m) \\
&\simeq \Hom_\CC(n\otimes_N y^R \otimes x \otimes_M m^R, \one)
\simeq \Hom_\CN(n, (y^R\otimes x \otimes_Mm^R)^L) \\
&\simeq \Hom_\CN(n, (x \otimes_Mm^R)^L\otimes y)
\simeq \Hom_\CN(n, [x,m]\odot y).
\end{align*}
where we have used Lemma \ref{lem:int-hom} in the first and the last step. This shows that $[x,-]\odot y$ is right adjoint to $[-,y]^R\odot x$, as desired.
\end{proof}

\void{
\begin{rem}
Note that the functor $\CM^{\op|R} \boxtimes_\CC \CN \xrightarrow{\simeq} \Fun_\CC(\CM, \CN)$ defined by $x\boxtimes_\CC y \mapsto [x,-]\odot y$ also intertwines the following actions:
$$
f\odot (x\boxtimes_\CC y) \odot^R y \mapsto f \circ ([x,-]\odot y) \circ g^{RR};
$$
and
the functor $\CM^{\op|L} \boxtimes_\CC \CN \xrightarrow{\simeq} \Fun_\CC(\CM, \CN)$ defined by $x\boxtimes_\CC y \mapsto [-,x]^R\odot y$ intertwines the following actions:
$$
f\odot (x\boxtimes_\CC y) \odot^L y \mapsto f \circ ([-,x]^R\odot y) \circ g^{LL}.
$$
\end{rem}
}


\subsection{Multi-tensor categories} \label{sec:mtc}

Let $k$ be a field. We denote by $\bk$ the category of finite dimensional vector spaces over $k$. We will denote $\CM\boxtimes_\bk\CN$ simply by $\CM\boxtimes\CN$ for $\bk$-modules $\CM,\CN$ when $k$ is clear from the context.

\begin{defn}
A {\it $\bk$-linear category} is a $k$-linear category that admits finite colimits.
By a {\it $\bk$-linear functor} between two $\bk$-linear categories we mean a $k$-linear functor that preserves finite colimits.
\end{defn}

\begin{defn}
By a {\it finite category} over $k$ we mean a $\bk$-linear category $\CC$ which is equivalent to $\RMod_A(\bk)$ for some finite dimensional $k$-algebra $A$.
\end{defn}

\begin{rem}
An intrinsic description of a finite category is a $k$-linear abelian category such that all morphism spaces are finite dimensional, every object has finite length, and it has finitely many simple objects, each of which has a projective cover (see \cite{egno}). 
\end{rem}

\begin{defn}  \label{def:k-linear-monoidal}
A {\it finite monoidal category} over $k$ is a monoidal category $\CC$ such that $\CC$ is a finite category over $k$ and the tensor product $\otimes:\CC\times\CC\to\CC$ is $k$-bilinear on morphisms and right exact separately in each variable. 
A {\it multi-tensor category} is a rigid finite monoidal category.
A {\it tensor category} is a multi-tensor category with a simple tensor unit.
We say that a multi-tensor category is {\it indecomposable} if it is neither zero nor the direct sum of two nonzero multi-tensor categories.
\end{defn}

\begin{rem} {\rm
In \cite{egno}, a multi-tensor category is called {\it a finite multi-tensor category}. We omit ``finite'' for simplicity. In \cite[Definition\,4.1.1]{egno}, $k$ is assumed to be algebraically closed, in this case, the notion of a tensor category defined above coincides with that of a finite tensor category in \cite{egno}.
}
\end{rem}

\begin{defn} \label{def:finite-module}
Given a finite monoidal category $\CC$, we say that a left $\CC$-module $\CM$ is {\it finite} if $\CM$ is a finite category and the action $\CC\times\CM\to\CM$ is $k$-bilinear on morphisms and right exact separately in each variable. The notions of a {\it finite right module} and a {\it finite bimodule} are defined similarly.
\end{defn}

\begin{lem} \label{lem:M-enrich-in-C}
Let $\CC$ be a finite monoidal category and $\CM$ a finite left $\CC$-module. Then $\CM$ is enriched in $\CC$.
\end{lem}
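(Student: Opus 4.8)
The plan is to realise each internal hom $[x,y]\in\CC$ as the value of a right adjoint, using only the finiteness hypotheses (note that, since $\CC$ is not assumed rigid here, neither Lemma~\ref{lem:int-hom} nor Theorem~\ref{thm:reconst} applies). Fix $x\in\CM$ and consider $L_x:=(-\odot x)\colon\CC\to\CM$. By Definition~\ref{def:finite-module} the action $\odot\colon\CC\times\CM\to\CM$ is a balanced $\bk$-module functor, so $L_x$ is a $\bk$-module functor that preserves finite colimits; that is, $L_x$ is a $k$-linear functor between finite categories over $k$ in the sense of Definition~\ref{def:k-linear}. Once we know $L_x$ has a right adjoint $R_x\colon\CM\to\CC$, we are done: for $y\in\CM$ the adjunction isomorphism reads, naturally in $a\in\CC$,
$$\Hom_\CC\big(a,R_x(y)\big)\simeq\Hom_\CM\big(L_x(a),y\big)=\Hom_\CM\big(a\odot x,y\big),$$
so $[x,y]:=R_x(y)$ has precisely the defining mapping property of the internal hom, and since $x,y$ were arbitrary, $\CM$ is enriched in $\CC$.

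Hence everything reduces to the claim that a $k$-linear functor $L\colon\CC\to\CM$ between finite categories over $k$ admits a right adjoint. I would prove this by passing to algebra presentations: choose equivalences $\CC\simeq\RMod_A(\bk)$ and $\CM\simeq\RMod_B(\bk)$ with $A,B$ finite-dimensional $k$-algebras. Since $A$ (as a right module over itself) is a projective generator of $\RMod_A(\bk)$, a right exact $k$-linear functor is determined by its value on $A$ together with the induced action of $\End_{\RMod_A}(A)\cong A$, and the standard Eilenberg--Watts-type argument identifies $L$ with $-\otimes_A Q$ for the $(A,B)$-bimodule $Q:=L(A)$, which is finite-dimensional over $k$. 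The functor $-\otimes_A Q$ has right adjoint $N\mapsto\Hom_B(Q,N)$ by the tensor--hom adjunction, and because $Q$ is finite-dimensional over $k$, $\Hom_B(Q,N)$ is finite-dimensional whenever $N$ is; hence this right adjoint lands in $\RMod_A(\bk)\simeq\CC$. Transporting back yields $R$. (Alternatively, extend $L$ to a colimit-preserving functor $\Ind(\CC)\to\Ind(\CM)$ between locally finite Grothendieck categories, apply the adjoint functor theorem, and check the resulting right adjoint preserves finite-length objects.)

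Specialising to $L=L_x$ with $Q=A\odot x$, one even gets the explicit formula $[x,y]\simeq\Hom_B(A\odot x,y)$. The one genuine obstacle is the existence of this right adjoint: a finite category is not cocomplete, so the adjoint functor theorem cannot be applied to $\CC$ directly, and the substantive content is that $\Hom_B(Q,-)$ preserves finite-dimensionality — which holds exactly because $Q=A\odot x$ is finite-dimensional, a consequence of $\CC$ and $\CM$ being finite categories. The remaining points (naturality of the adjunction isomorphism, independence of the chosen presentations) are routine.
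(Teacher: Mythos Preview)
Your argument is correct: reducing the existence of $[x,y]$ to the existence of a right adjoint of $L_x=(-\odot x)$, and then producing that right adjoint via an Eilenberg--Watts identification $L_x\simeq -\otimes_A Q$ with $Q=A\odot x$ and the tensor--hom adjunction, is a clean and self-contained route. The only point that needs care --- that $\Hom_B(Q,-)$ stays in finite-dimensional modules --- you handle correctly via the finiteness of $Q$.

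The paper takes a different path. Rather than building a right adjoint to $(-\odot x)$ directly, it looks at the covariant functor $\Hom_\CM(-\odot x,y)^\vee:\CC\to\bk$, observes that it is right exact (duality turns the left-exactness of $\Hom$ into right-exactness), and then invokes the equivalence $\CC^\op\simeq\Fun_\bk(\CC,\bk)$, $a\mapsto\Hom_\CC(-,a)^\vee$, which is the specialisation of Corollary~\ref{cor:tensor-prod}(1) over $\bk$. This shows $\Hom_\CM(-\odot x,y)^\vee\simeq\Hom_\CC(-,[x,y])^\vee$ for some $[x,y]$, hence representability. So the paper replaces your Eilenberg--Watts step by a representability argument that reuses the machinery already set up for tensor products of module categories. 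Your approach is more elementary and yields the concrete formula $[x,y]\simeq\Hom_B(A\odot x,y)$ without appealing to Corollary~\ref{cor:tensor-prod}; the paper's approach is shorter given its context, and illustrates that the equivalence $\CC^\op\simeq\Fun_\bk(\CC,\bk)$ (a categorified double-dual) already encodes the needed adjoint.
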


\begin{proof}
The functor $\Hom_\CM(-\odot x,y)^\vee:\CC\to\bk$, where ${}^\vee$ represents the dual vector space, is right exact for $x,y\in\CM$, hence, a functor in $\Fun_\bk(\CC,\bk)$. By Corollary \ref{cor:tensor-prod}(1), the assignment $a\mapsto\Hom_\CC(-,a)^\vee$ defines an equivalence $\CC^\op \simeq \Fun_\bk(\CC,\bk)$. Therefore, $\Hom_\CM(-\odot x,y)^\vee$ is representable. Namely, $[x,y]$ exists.
\end{proof}

\begin{rem}
Actually, one can show that a $k$-linear functor between two finite categories has a right (resp. left) adjoint if and only if it is right (resp. left) exact.
\end{rem}

\begin{prop} \label{prop:mod-alg}
Let $\CC$ be a multi-tensor category and $\CM$ a finite left $\CC$-module. We have $\CM\simeq\RMod_M(\CC)$ for some algebra $M$ in $\CC$.
\end{prop}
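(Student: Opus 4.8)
The plan is to verify the two conditions of Theorem \ref{thm:reconst} for the left $\CC$-module $\CM$, with a suitable choice of generating object $P$, and then apply that theorem. First I would note that $\CM$ admits coequalizers since it is a finite (in particular abelian) category, and $\CC$ admits coequalizers for the same reason; since $\CC$ is a multi-tensor category (hence rigid), the hypotheses of Theorem \ref{thm:reconst} concerning $\CC$ are in place. Condition (1), that $\CM$ is enriched in $\CC$, is exactly Lemma \ref{lem:M-enrich-in-C}, so that step is free.

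The substance is Condition (2): I must produce an object $P\in\CM$ such that $[P,-]:\CM\to\CC$ is conservative and preserves coequalizers. The natural candidate is a projective generator of $\CM$, which exists because $\CM$ is a finite category over $k$ (write $\CM\simeq\RMod_A(\bk)$ and take the image of $A$, or more intrinsically take the direct sum of projective covers of the finitely many simple objects). For preservation of coequalizers: a coequalizer in an abelian category is a cokernel, and $[P,-]$ is right exact because, by the defining adjunction $\Hom_\CC(a,[P,x])\simeq\Hom_\CM(a\odot P,x)$, it is right adjoint to $-\odot P:\CC\to\CM$, and $-\odot P$ is right exact (indeed the action is a balanced $\bk$-module functor, so preserves finite colimits in each variable); a right adjoint whose left adjoint is right exact — wait, one wants the functor itself right exact, which follows instead from the fact that $[P,-]$ being a right adjoint is left exact, so I should argue right-exactness differently. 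The cleaner route: since $P$ is projective, $\Hom_\CM(P,-)$ is exact, and $[P,-]$ refines $\Hom_\CM(P,-)$ in the sense that $\Hom_\bk(V,\Hom_\CC(k,[P,x]))$-type identities let one check exactness object-wise; concretely, for a cokernel diagram $x\to y\to z\to 0$ in $\CM$, one tests $[P,-]$ against all $a\in\CC$ via $\Hom_\CC(a,[P,-])\simeq\Hom_\CM(a\odot P,-)$ and uses that $a\odot P$ is projective (as $a\odot-$ has an exact right adjoint $a^R\odot-$, hence preserves projectives — here rigidity of $\CC$ is used) so that $\Hom_\CM(a\odot P,-)$ is exact, giving exactness of $\Hom_\CC(a,[P,-])$ for all $a$ and therefore exactness of $[P,-]$.

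For conservativity of $[P,-]$: if $[P,x]\simeq 0$ then $\Hom_\CM(a\odot P,x)\simeq\Hom_\CC(a,[P,x])\simeq 0$ for every $a\in\CC$; taking $a=\one$ gives $\Hom_\CM(P,x)\simeq 0$, and since $P$ is a generator this forces $x\simeq 0$. Thus $[P,-]$ is conservative. Having checked both conditions, Theorem \ref{thm:reconst} yields $\CM\simeq\RMod_{[P,P]}(\CC)$, so we may take $M=[P,P]$.

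The main obstacle I anticipate is the right-exactness (coequalizer-preservation) of $[P,-]$: naively $[P,-]$ is a right adjoint and hence only left exact, so the argument genuinely requires exploiting projectivity of $P$ together with the rigidity of $\CC$ (so that $a\odot-$ preserves projective objects of $\CM$), after which the functors $\Hom_\CC(a,[P,-])\simeq\Hom_\CM(a\odot P,-)$ are all exact and one concludes exactness of $[P,-]$ by the fact that $\{\Hom_\CC(a,-)\}_{a\in\CC}$ is jointly conservative and detects exactness. Everything else is a routine assembly of Lemma \ref{lem:M-enrich-in-C}, finiteness of $\CM$, and Theorem \ref{thm:reconst}.
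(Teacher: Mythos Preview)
Your proof is correct and follows essentially the same route as the paper: choose a projective generator $P$ of $\CM$ (the paper takes $P=A$ where $\CM\simeq\RMod_A(\bk)$), verify the hypotheses of Theorem~\ref{thm:reconst} via Lemma~\ref{lem:M-enrich-in-C}, and use rigidity of $\CC$ to get exactness of $[P,-]$. The only cosmetic difference is in the exactness step: you test $\Hom_\CC(a,[P,-])\simeq\Hom_\CM(a\odot P,-)$ against all $a\in\CC$, whereas the paper tests only against the projective generator $B$ of $\CC$ (i.e., composes with the faithful exact forgetful functor $\CC\to\bk$ and computes $F\circ[A,-]\simeq\Hom_\bk(k,B^R\odot-)$), which sidesteps the need to argue that the family $\{\Hom_\CC(a,-)\}_a$ detects right exactness.
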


\begin{proof}
Assume $\CM=\RMod_A(\bk)$ and $\CC=\RMod_B(\bk)$. According to Lemma \ref{lem:M-enrich-in-C}, $\CM$ is enriched in $\CC$. In particular, the functor $[A,-]:\CM\to\CC$ is well defined. The functor $\Hom_\CC(\one,[A,-]) \simeq \Hom_\CM(A,-)\simeq \Hom_\bk(k,-)$ is conservative, thus $[A,-]$ is also conservative. Let $F: \CC\to \bk$ be the forgetful functor. The functor $F([A,-]) \simeq \Hom_\CC(B,[A,-]) \simeq \Hom_\CM(A,B^R\odot-)\simeq \Hom_\bk(k, B^R\odot-)$ is exact, hence $[A,-]$ is exact. Applying Theorem \ref{thm:reconst}, we establish the proposition.
\end{proof}

\begin{lem} \label{lem:mod-alg}
Let $\CC$ be a finite monoidal category and $M,N$ algebras in $\CC$. Then $\BMod_{M|N}(\CC)$ is a finite category. In particular, $\LMod_M(\CC)$ and $\RMod_N(\CC)$ are finite categories.
\end{lem}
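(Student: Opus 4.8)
The plan is to realize $\BMod_{M|N}(\CC)$ as the category of right modules over a single finite-dimensional algebra, which by Definition \ref{def:k-linear} is exactly what it means to be a finite category. First I would write $\CC\simeq\RMod_B(\bk)$ for some finite-dimensional $k$-algebra $B$. The key observation is that an $M$-$N$-bimodule object in $\CC$ is the same data as an object of $\CC$ (equivalently, a right $B$-module) together with a left $M$-action and a right $N$-action that are $\CC$-linear and commute with each other; all of this can be packaged as a module over a suitable finite-dimensional algebra built from $B$, $M$, and $N$.

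Concretely, I would proceed as follows. An $M$-$N$-bimodule in $\CC$ is a $B$-module $V$ together with morphisms $M\otimes_B V\to V$ and $V\otimes_B N\to V$ in $\RMod_B(\bk)$ satisfying the usual associativity, unit, and commutativity axioms. Since the tensor product $\otimes$ on $\CC$ is right exact in each variable, $M\otimes_B(-)$ and $(-)\otimes_B N$ are right exact endofunctors of $\RMod_B(\bk)$, hence given by tensoring with $B$-$B$-bimodules; call them $P = M\otimes_B B$ (as a $B$-$B$-bimodule, via the two actions making $M\otimes_B(-)$ a functor) and $Q$ the analogous $B$-$B$-bimodule for the right $N$-action. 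Then a bimodule object is precisely a $B$-module $V$ with bimodule maps $P\otimes_B V\to V$, $V\otimes_B Q\to V$ subject to finitely many relations, which is the same as a module over the finite-dimensional algebra
$$
\Lambda \;=\; \bigl(B \;\ast_B\; T_B(P) \;\ast_B\; T_B(Q)\bigr)\big/(\text{associativity, unit, and commutativity relations}),
$$
i.e.\ the quotient of the appropriate free product (over $B$) of the tensor algebras $T_B(P)$ and $T_B(Q)$ by the ideal imposing that $P$ acts unitally and associatively, $Q$ acts unitally and associatively, and the two actions commute. Since $M$, $N$, and hence $P$, $Q$ are finite-dimensional, the relevant bimodules are finitely generated and the quotient $\Lambda$ is a finite-dimensional $k$-algebra; and $\BMod_{M|N}(\CC)\simeq\RMod_\Lambda(\bk)$ by construction. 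The special cases $\LMod_M(\CC)$ and $\RMod_N(\CC)$ follow by taking $N=\one$ or $M=\one$, since $\BMod_{M|\one}(\CC)\simeq\LMod_M(\CC)$ and $\BMod_{\one|N}(\CC)\simeq\RMod_N(\CC)$.

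I expect the main obstacle to be purely bookkeeping: making precise that the left and right actions, together with their relations, really do assemble into a module category over a single \emph{finite-dimensional} algebra — in particular checking that the relation ideal is finitely generated (equivalently, that the algebra obtained is finite-dimensional rather than merely Noetherian). An alternative, perhaps cleaner, route that avoids explicit generators and relations is to use the monadic description: the forgetful functor $\BMod_{M|N}(\CC)\to\CC$ has a left adjoint $V\mapsto M\otimes V\otimes N$ (using rigidity of $\CC$ and Remark \ref{rem:rigid-(co)limits} to get right-exactness and the adjunction), the resulting monad on $\CC\simeq\RMod_B(\bk)$ preserves finite colimits, and a finite-colimit-preserving monad on a finite category has category of algebras again a finite category — one then invokes Theorem \ref{thm:reconst} (or its Barr–Beck input) with a suitable progenerator $P = M\otimes B_0\otimes N$, where $B_0$ is a progenerator of $\CC$. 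Either way, the content is that finiteness is preserved under passing to bimodule objects, and the rigidity of $\CC$ is what guarantees the requisite exactness.
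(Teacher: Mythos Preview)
Your alternative route at the end is essentially the paper's proof, and it is the cleaner one. The paper writes $\CC=\RMod_A(\bk)$, takes $P=M\otimes A\otimes N\in\BMod_{M|N}(\CC)$, and observes that
\[
\Hom_{\BMod_{M|N}(\CC)}(M\otimes A\otimes N,-)\;\simeq\;\Hom_\CC(A,-)\;\simeq\;\Hom_\bk(k,-)
\]
is exact and conservative (it is the underlying-vector-space functor); Theorem~\ref{thm:reconst} over $\bk$ then gives $\BMod_{M|N}(\CC)\simeq\RMod_{[P,P]}(\bk)$. That is the whole argument.

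One correction: you invoke rigidity of $\CC$, but the lemma only assumes $\CC$ is a \emph{finite monoidal} category, not a multi-tensor category. Rigidity is neither available nor needed. Right-exactness of $\otimes$ in each variable is already part of Definition~\ref{def:k-linear-monoidal} (the tensor product is a balanced $\bk$-module functor), and the free--forgetful adjunction $V\mapsto M\otimes V\otimes N$ holds in any monoidal category.

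On your first route: the free-product-of-tensor-algebras presentation is more complicated than necessary, and your worry about finite-dimensionality is an artifact of that presentation. The monad $T(V)=M\otimes V\otimes N$ on $\RMod_B(\bk)$ is right exact, hence by Eilenberg--Watts is $V\mapsto V\otimes_B P'$ for a single finite-dimensional $B$-$B$-bimodule $P'$; the monad structure makes $P'$ a $B$-ring, and $\BMod_{M|N}(\CC)\simeq\RMod_{P'}(\bk)$ on the nose. The associativity and unit relations collapse the tensor algebra entirely, so there is nothing to check. (Also, writing ``$M\otimes_B V$'' for the monoidal product in $\CC$ is misleading: for $\CC=\mathrm{Rep}(H)$ with $H$ a Hopf algebra, $B=H$ but $\otimes$ is $\otimes_k$, not $\otimes_H$.)
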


\begin{proof}
Assume $\CC=\RMod_A(\bk)$. The functor $\Hom_{\BMod_{M|N}(\CC)}(M\otimes A\otimes N,-) \simeq \Hom_\CC(A,-) \simeq \Hom_\bk(k,-)$ is exact and conservative. Then apply Theorem \ref{thm:reconst}.
\end{proof}

\begin{cor}\label{cor:prod-well-define}
Let $\CC$ be a multi-tensor category, $\CM$ and $\CN$ finite right and left $\CC$-modules, respectively. Then $\CM\boxtimes_\CC\CN$ is a finite category.
\end{cor}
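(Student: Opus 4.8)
The plan is to realize $\CM\boxtimes_\CC\CN$ as a category of bimodule objects internal to $\CC$ and then invoke Lemma~\ref{lem:mod-alg}. So the goal is to produce algebras $M,N\in\Alg(\CC)$ together with an equivalence of left $\CC$-modules $\CN\simeq\RMod_N(\CC)$ and an equivalence of right $\CC$-modules $\CM\simeq\LMod_M(\CC)$. Granting this, $\CC$ admits finite colimits and, being rigid, has $\otimes$ preserving finite colimits separately in each variable (Remark~\ref{rem:rigid-(co)limits}), so Theorem~\ref{thm:tensor-prod}(1) identifies $\CM\boxtimes_\CC\CN$ with $\BMod_{M|N}(\CC)$, and Lemma~\ref{lem:mod-alg} then gives that the latter is a finite category, finishing the proof.

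The realization of $\CN$ is immediate: $\CN$ is a finite left module over the multi-tensor category $\CC$, so Proposition~\ref{prop:mod-alg} applies directly. For $\CM$ I would first observe that $\CC^\rev$ is again a multi-tensor category --- it is still a finite monoidal category, and left (resp.\ right) duals in $\CC^\rev$ are precisely right (resp.\ left) duals in $\CC$, so it is rigid --- and that a finite right $\CC$-module is the same datum as a finite left $\CC^\rev$-module. Applying Proposition~\ref{prop:mod-alg} to $\CC^\rev$ then yields $\CM\simeq\RMod_{M}(\CC^\rev)$ for some algebra $M$ in $\CC^\rev$. It then remains to unwind the definition $X\otimes^\rev Y=Y\otimes X$ and check that an algebra in $\CC^\rev$ carries the data of an algebra in $\CC$, that $\RMod_M(\CC^\rev)$ coincides with $\LMod_M(\CC)$ (the associativity and unit axioms for a right $M$-action in $\CC^\rev$ become verbatim those for a left $M$-action in $\CC$), and that the left $\CC^\rev$-module structure on the former agrees with the canonical right $\CC$-module structure $V\odot c=V\otimes c$ on the latter.

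The one point needing care --- really bookkeeping rather than a substantive obstacle --- is this last translation between $\CC$ and $\CC^\rev$: one must verify that passing to $\CC^\rev$, invoking Proposition~\ref{prop:mod-alg}, and translating back delivers an honest algebra $M\in\Alg(\CC)$ and an equivalence of right $\CC$-modules $\CM\simeq\LMod_M(\CC)$, not some twisted variant. Once the two realizations $\CM\simeq\LMod_M(\CC)$ and $\CN\simeq\RMod_N(\CC)$ are in hand, everything else is a direct application of Theorem~\ref{thm:tensor-prod} and Lemma~\ref{lem:mod-alg}.
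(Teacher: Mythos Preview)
Your proposal is correct and follows exactly the paper's approach: the paper's one-line proof simply says ``Combine Theorem~\ref{thm:tensor-prod}, Proposition~\ref{prop:mod-alg} and Lemma~\ref{lem:mod-alg},'' and your write-up spells out precisely how these three results are combined. The $\CC^\rev$ bookkeeping you flag is indeed routine and is left implicit in the paper.
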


\begin{proof}
Combine Theorem \ref{thm:tensor-prod}, Proposition \ref{prop:mod-alg} and Lemma \ref{lem:mod-alg}.
\end{proof}

\begin{rem}
Results similar to Proposition \ref{prop:mod-alg} and Corollary \ref{cor:prod-well-define} were proved in \cite{eo} for $\CM, \CN$ being exact modules.
\end{rem}

\begin{rem}
The universal property of the tensor product $\boxtimes_\CC$ can be enhanced to the $\bk$-linear setting as illustrated in the following commutative diagram:
$$
\xymatrix{
\CM \boxtimes \CN \ar[r]^{\boxtimes_\CC}  \ar[rd]_F & \CM \boxtimes_\CC \CN  \ar[d]^{\exists !\, \underline{F} }  \\
& \CD
}
$$
for $F\in \Fun^{bal}_\CC(\CM,\CN;\CD)$.
\end{rem}

The part (1) of the following proposition was proved in \cite[Proposition\,3.8]{dss}. For readers convenience, we briefly sketch the proof.
\begin{prop} \label{prop:prod-fun}
Let $\CC,\CC'$ be finite monoidal categories and $\CM=\RMod_M(\CC)$, $\CN=\RMod_N(\CC)$, $\CM'=\RMod_{M'}(\CC')$, $\CN'=\RMod_{N'}(\CC')$ for some algebras $M,N\in\Alg(\CC)$, $M',N'\in\Alg(\CC')$.

$(1)$ The formula $(V,V')\mapsto V\boxtimes V'$ determines an equivalence
$$\CM\boxtimes\CM' \simeq \RMod_{M\boxtimes M'}(\CC\boxtimes\CC').$$

$(2)$ The formula $(\phi,\psi)\mapsto\phi\boxtimes\psi$ determines an equivalence $$\Fun_\CC(\CM,\CN)\boxtimes\Fun_{\CC'}(\CM',\CN') \simeq \Fun_{\CC\boxtimes\CC'}(\CM\boxtimes\CM',\CN\boxtimes\CN').$$
\end{prop}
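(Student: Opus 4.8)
The plan is to deduce both parts from one fact about the Deligne tensor product $\boxtimes=\boxtimes_\bk$: it commutes with the passage to categories of modules over a monad of ``tensoring'' type. Since part (2) reduces to part (1) by Theorem \ref{thm:tensor-prod}, the substantive point is part (1). By definition $\CM=\RMod_M(\CC)$ is the category of algebras over the monad $T_M=-\otimes M$ on $\CC$, and the forgetful functor $U\colon\CM\to\CC$ --- right exact, conservative, with left adjoint $T_M$ --- exhibits this monadicity (cf.\ the proof of Theorem \ref{thm:reconst}); likewise for $\CM'$ over $\CC'$. The $\bk$-bilinear functor $\CM\times\CM'\to\RMod_{M\boxtimes M'}(\CC\boxtimes\CC')$, $(V,V')\mapsto V\boxtimes V'$ (with the evident $M\boxtimes M'$-action), is right exact in each variable, so it factors through a functor $\Phi\colon\CM\boxtimes\CM'\to\RMod_{M\boxtimes M'}(\CC\boxtimes\CC')$; here $\CM\boxtimes\CM'$ is a finite category by Corollary \ref{cor:prod-well-define} and the target is finite by Lemma \ref{lem:mod-alg}. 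To see $\Phi$ is an equivalence I would apply the Barr--Beck theorem to $\CM\boxtimes\CM'$: the forgetful-type functor $\CM\boxtimes\CM'\to\CC\boxtimes\CC'$ induced by $U$ and $U'$ is $U\boxtimes U'$, which is again right exact and conservative, with left adjoint $T_M\boxtimes T_{M'}$; since the monoidal structure of $\CC\boxtimes\CC'$ is the external one, the induced monad is $T_M\boxtimes T_{M'}=(-\otimes M)\boxtimes(-\otimes M')=-\otimes(M\boxtimes M')$, whose algebras are precisely $\RMod_{M\boxtimes M'}(\CC\boxtimes\CC')$. Hence $\CM\boxtimes\CM'\simeq\RMod_{M\boxtimes M'}(\CC\boxtimes\CC')$, and tracing through Barr--Beck identifies this equivalence with $\Phi$. (Equivalently, one may apply Theorem \ref{thm:reconst} in the form of Remark \ref{rem:reconst} to the finite $\CC\boxtimes\CC'$-module $\CM\boxtimes\CM'$ with $P=M\boxtimes M'$, using the evident identity $[x\boxtimes x',y\boxtimes y']\simeq[x,y]\boxtimes[x',y']$ for internal homs of external products and $[M,M]\simeq M$.)

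For part (2), Theorem \ref{thm:tensor-prod}(1)--(2) provides a canonical equivalence $\Fun_\CC(\RMod_M(\CC),\RMod_N(\CC))\simeq\BMod_{M|N}(\CC)$, sending $z$ to $-\otimes_M z$, and similarly over $\CC'$; and, after using part (1) to identify $\CM\boxtimes\CM'$ with $\RMod_{M\boxtimes M'}(\CC\boxtimes\CC')$ and $\CN\boxtimes\CN'$ with $\RMod_{N\boxtimes N'}(\CC\boxtimes\CC')$, the same theorem gives $\Fun_{\CC\boxtimes\CC'}(\CM\boxtimes\CM',\CN\boxtimes\CN')\simeq\BMod_{M\boxtimes M'|N\boxtimes N'}(\CC\boxtimes\CC')$. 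Thus part (2) reduces to the assertion that $\BMod_{M|N}(\CC)\boxtimes\BMod_{M'|N'}(\CC')\simeq\BMod_{M\boxtimes M'|N\boxtimes N'}(\CC\boxtimes\CC')$ via $V\boxtimes V'\mapsto V\boxtimes V'$. This is proved exactly as part (1): the forgetful functor $\BMod_{M|N}(\CC)\to\CC$ is right exact and conservative with left adjoint $X\mapsto M\otimes X\otimes N$ (cf.\ the proof of Lemma \ref{lem:mod-alg}), so $\BMod_{M|N}(\CC)$ is the category of algebras over the monad $X\mapsto M\otimes X\otimes N$; its external product with $X'\mapsto M'\otimes X'\otimes N'$ is the monad $(X\boxtimes X')\mapsto(M\boxtimes M')\otimes(X\boxtimes X')\otimes(N\boxtimes N')$, whose algebras are $\BMod_{M\boxtimes M'|N\boxtimes N'}(\CC\boxtimes\CC')$, and Barr--Beck concludes. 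Finally, unwinding all the equivalences --- and using that $\boxtimes$ commutes with the relative tensor products $-\otimes_M-$, these being coequalizers --- shows that the composite carries $\phi\boxtimes\psi$ to $\phi\boxtimes\psi$, as claimed.

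The main obstacle is the foundational input about $\boxtimes_\bk$ that is invoked repeatedly but not established in the excerpt: that $F\boxtimes F'$ is right exact (resp.\ conservative) whenever $F$ and $F'$ are; that the left adjoint of $F\boxtimes F'$ is the external product of the left adjoints of $F$ and $F'$; hence that the external product of two monadic adjunctions between finite categories is the monadic adjunction of the external-product monad; and the identity $(-\otimes M)\boxtimes(-\otimes M')=-\otimes(M\boxtimes M')$ of monads on $\CC\boxtimes\CC'$, which merely restates that $\CC\boxtimes\CC'$ carries the external monoidal structure. These are soft consequences of the universal property of $\boxtimes_\bk$, but they must be set down with some care; alternatively, one can reduce everything to the classical equivalence $\RMod_A(\bk)\boxtimes\RMod_B(\bk)\simeq\RMod_{A\otimes_k B}(\bk)$ for finite-dimensional $k$-algebras, together with a routine bookkeeping of algebras and their projective generators. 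Against this, the module- and bimodule-category manipulations above are entirely routine.
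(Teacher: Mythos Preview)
Your approach is essentially the paper's: both apply Barr--Beck to the forgetful functor $\CM\boxtimes\CM'\to\CC\boxtimes\CC'$, identify the resulting monad with $-\otimes(M\boxtimes M')$, and for part (2) pass to bimodules via Theorem~\ref{thm:tensor-prod}. The only tactical difference is that the paper factors the forgetful functor as a composite $\CM\boxtimes\CM'\xrightarrow{G'}\CC\boxtimes\CM'\xrightarrow{G''}\CC\boxtimes\CC'$ and asserts that each piece is exact and conservative, whereas you take $U\boxtimes U'$ in one step; the foundational facts about $\boxtimes_\bk$ you carefully flag as obstacles are exactly what the paper sweeps under ``Clearly''.
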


\begin{proof}
$(1)$ In view of Corollary \ref{cor:prod-well-define}, the categories $\CM \boxtimes \CM'$, $\CC\boxtimes\CM'$ are well-defined. Let $G: \RMod_{M\boxtimes M'}(\CC\boxtimes\CC') \to \CC\boxtimes\CC'$ be the forgetful functor. Let $G': \CM\boxtimes\CM' \to \CC\boxtimes\CM'$ and $G'': \CC\boxtimes\CM' \to \CC\boxtimes\CC'$ be the functors canonically induced from the forgetful functors, and let $F,F',F''$ be their left adjoints, respectively. $G$ is exact and conservative. One can show that $G',G''$ are also exact and conservative. Applying the Barr-Beck theorem, we see that the functors $G$ and $G''\circ G'$ exhibit $\RMod_{M\boxtimes M'}(\CC\boxtimes\CC')$ and $\CM\boxtimes\CM'$ monadic over $\CC\boxtimes\CC'$, respectively. Then we obtain the desired equivalence from the isomorphism of monads $G\circ F \simeq (G''\circ G')\circ(F'\circ F'')$.

$(2)$ By using the same argument, we deduce an equivalence $\BMod_{M|N}(\CC) \boxtimes \BMod_{M'|N'}(\CC') \simeq \BMod_{M\boxtimes M'|N\boxtimes N'}(\CC\boxtimes\CC')$.
\end{proof}

\subsection{The center of a monoidal category}

Recall that the {\it center} (or {\it Drinfeld center} or {\it monoidal center}) of a monoidal category $\CC$, denoted by $\FZ(\CC)$, is the category of pairs $(z,\beta_{z,-})$, where $z \in \CC$ and $\beta_{z,-}: z\otimes - \to -\otimes z$ is a half braiding (see for example \cite{js,majid}).
Morphisms in $\FZ(\CC)$ are morphisms between the first components preserving the half-braidings.
The category $\FZ(\CC)$ has a natural structure of a braided monoidal category with the braidings defined by the half-braidings \cite{js2}.
Moreover, $\FZ(\CC)$ can be naturally identified with the category of $\CC$-$\CC$-bimodule functors from $\CC$ to $\CC$.

\void{
Sometimes it is useful to characterize $\FZ(\CC)$ by the following universal property of center. The tensor product of $\CC$ induces a unital monoidal action $\rho:  \CC \times \FZ(\CC)\to \CC$, i.e. $\rho$ is a monoidal functor such that the tensor unit of $\FZ(\CC)$ acts by identity on $\CC$. Given any monoidal category $\CX$ and any unital monoidal action $f: \CC \times \CX \to \CC$, there exists a unique (up to isomorphism) monoidal functor $\underline{f}: \CX \to \FZ(\CC)$ such that the following diagram
\be \label{diag:univ-property}
\xymatrix{
& \CC \times \FZ(\CC)  \ar[dr]^\rho & \\
\CC \times \CX \ar[rr]^f \ar[ur]^{\exists ! \, \Id_\CC\times \underline{f}} & & \CC
}
\ee
is commutative up to isomorphism.
}

\begin{rem} \label{rem:rev-center}
For a monoidal category $\CC$, we use $\CC^\rev$ to denote the monoidal category which has the same underlying category as $\CC$ but equipped with the reversed tensor product $a\otimes^\rev b := b\otimes a$.
For a braided monoidal category $\CC$ with a braiding $\beta_{a,b}: a\otimes b \xrightarrow{\simeq} b\otimes a$ for $a,b\in \CC$, we use $\overline\CC$ to denote the same monoidal category $\CC$ but equipped with the anti-braiding $\bar\beta_{a,b}:=(\beta_{b,a})^{-1}$.
Note that a half-braiding in a monoidal category $\CC$ is identical to the inverse of a half-braiding in $\CC^\rev$. Consequently, we may simply identify
$$\FZ(\CC^\rev)=\overline{\FZ(\CC)}.$$
\end{rem}

\begin{rem}\label{rem:rigid-center}
If $\CC$ is a rigid monoidal category, then $\FZ(\CC)$ is also rigid. Actually, $\Fun_\CC(\CC,\CC)\simeq\CC^\rev$ is rigid, hence closed under taking left and right adjoints. It follows that $\FZ(\CC)\simeq\Fun_{\CC|\CC}(\CC,\CC)$ is rigid (recall Remark\,\ref{rem:adjoint}). If $\CC$ is multi-tensor, so is $\FZ(\CC)$, and the forgetful functor $\FZ(\CC) \to \CC$ is exact. 
\end{rem}

\begin{defn}
Let $\CC$ and $\CD$ be two finite monoidal categories and $\CM$ a finite $\CC$-$\CD$-bimodule. We say that $\CM$ is {\it invertible} if there is a finite $\CD$-$\CC$-bimodule $\CN$ and equivalences $\CM\boxtimes_\CD\CN\simeq\CC$ as $\CC$-$\CC$-bimodules and $\CN\boxtimes_\CC\CM\simeq\CD$ as $\CD$-$\CD$-bimodules. If such an invertible $\CC$-$\CD$-bimodule exists, $\CC$ and $\CD$ are said to be {\it Morita equivalent}.
\end{defn}

The following result was proved in various contexts with different assumptions (see for example \cite{schauenburg,mueger0,eno2009}). For reader's convenience, we sketch a proof.
\begin{prop} \label{prop:inv-bimod-center}
Let $\CC,\CD$ be finite monoidal categories and $\CM$ an invertible $\CC$-$\CD$-bimodule. The evident monoidal functors $\FZ(\CC) \to \Fun_{\CC|\CD}(\CM,\CM) \leftarrow \FZ(\CD)$ are equivalences. Moreover, the induced equivalence $\FZ(\CC)\simeq\FZ(\CD)$ preserves braidings.
\end{prop}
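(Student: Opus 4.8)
The plan is to exhibit the two monoidal functors $\FZ(\CC)\to\Fun_{\CC|\CD}(\CM,\CM)$ and $\FZ(\CD)\to\Fun_{\CC|\CD}(\CM,\CM)$ as equivalences by producing explicit quasi-inverses, using the tensor-product realizations of module categories established in Theorem \ref{thm:tensor-prod} and Corollary \ref{cor:tensor-prod}. The key observation is that an invertible $\CC$-$\CD$-bimodule $\CM$ identifies $\FZ(\CC)$, $\FZ(\CD)$, and $\Fun_{\CC|\CD}(\CM,\CM)$ as the "categorified center" seen from three equivalent vantage points; all three can be computed as $\CC$-$\CD$-bimodule endofunctors of $\CM$. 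Concretely, since $\CM$ is invertible there is a $\CD$-$\CC$-bimodule $\CN$ with $\CM\boxtimes_\CD\CN\simeq\CC$ and $\CN\boxtimes_\CC\CM\simeq\CD$ as bimodules. The monoidal functor $\FZ(\CC)\to\Fun_{\CC|\CD}(\CM,\CM)$ sends a half-braiding $(Z,z)$ to the $\CC$-$\CD$-bimodule functor $x\mapsto Z\odot x$ (the half-braiding supplying the compatibility with the left $\CC$-action and the right $\CD$-action being free since $Z\in\CC$ acts on the left); similarly for $\FZ(\CD)$ on the right.

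First I would set up the relevant identifications. Using $\FZ(\CC)\simeq\Fun_{\CC|\CC}(\CC,\CC)$ (recalled in the excerpt) and the equivalence $\CC\simeq\CM\boxtimes_\CD\CN$ of $\CC$-$\CC$-bimodules, one gets
$$
\FZ(\CC)\simeq\Fun_{\CC|\CC}(\CM\boxtimes_\CD\CN,\,\CM\boxtimes_\CD\CN).
$$
Next I would argue that a $\CC$-$\CC$-bimodule endofunctor of $\CM\boxtimes_\CD\CN$ is the same datum as a $\CC$-$\CD$-bimodule endofunctor of $\CM$: one direction sends $G\colon\CM\to\CM$ to $G\boxtimes_\CD\Id_\CN$, and the other direction recovers $G$ by tensoring with $\CM$ on the right over $\CC$ and using $\CN\boxtimes_\CC\CM\simeq\CD$. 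That these two constructions are mutually inverse is a formal consequence of the bimodule invertibility (the ``cancellation'' $\CN\boxtimes_\CC\CM\simeq\CD$ acting as a unit), together with the fact that $\boxtimes$ is functorial in module functors. This yields $\FZ(\CC)\simeq\Fun_{\CC|\CD}(\CM,\CM)$, and one checks directly that this composite equivalence is the evident monoidal functor named in the statement (it sends $(Z,z)$ to $Z\odot-$). Running the symmetric argument on the other side gives $\FZ(\CD)\simeq\Fun_{\CC|\CD}(\CM,\CM)$ via $(W,w)\mapsto -\odot W$.

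Then I would verify the monoidal structure and the braiding claim. Monoidality of each functor follows from the compatibility of composition of bimodule functors with $\boxtimes$ (composition of $Z\odot-$ and $Z'\odot-$ is $(Z'\otimes Z)\odot-$, matching the tensor product in $\FZ(\CC)$, up to the standard reversal which is absorbed into the conventions). For the braiding: the induced equivalence $\FZ(\CC)\simeq\FZ(\CD)$ factors through $\Fun_{\CC|\CD}(\CM,\CM)$, and on an object it sends $Z\odot-$ to the essentially unique $W$ with $Z\odot-\simeq-\odot W$ as bimodule functors. The half-braiding on $\FZ(\CC)$ and that on $\FZ(\CD)$ both arise from the \emph{same} interchange isomorphism between the left $\CC$-action and the right $\CD$-action on $\CM$ (these actions commute because $\CM$ is a bimodule), so the braidings match under the identification; this is exactly the argument already used in \cite{schauenburg,mueger0}, and I would cite it for the diagrammatic bookkeeping.

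The main obstacle is the second step: proving that ``$-\boxtimes_\CD\Id_\CN$'' and ``$-\boxtimes_\CC\CM$ then cancel'' are genuinely inverse equivalences \emph{of categories of bimodule functors}, i.e. that the $2$-categorical invertibility of $\CM$ (not merely the $1$-categorical equivalences $\CM\boxtimes_\CD\CN\simeq\CC$, $\CN\boxtimes_\CC\CM\simeq\CD$) is enough to transport $\Fun_{\CC|\CC}(\CC,\CC)$ to $\Fun_{\CC|\CD}(\CM,\CM)$ compatibly with all structure. Here I would lean on Theorem \ref{thm:tensor-prod}(2) and Corollary \ref{cor:tensor-prod}, which present all these functor categories as relative tensor products of explicit module categories $\RMod$ of algebras, turning the coherence into a bookkeeping check about bimodules over algebras in $\CC$; the associativity and unit constraints of $\boxtimes$ then do the rest. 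Once the functor-category identifications are in place, monoidality and the braiding compatibility are routine.
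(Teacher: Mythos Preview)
Your approach is correct in outline but differs from the paper's. In the main body the paper does not prove this proposition at all; it cites \cite{schauenburg,mueger0,eno2009}. The draft proof included after \verb|\end{document}| proceeds differently: rather than tensoring with the inverse bimodule $\CN$, it uses the implication ``$\CM$ invertible $\Rightarrow$ $\CC\simeq\Fun_{\CD^\rev}(\CM,\CM)$'' to produce an explicit inverse to the action functor. Namely, the forgetful functor $\Fun_{\CC|\CD}(\CM,\CM)\to\Fun_{\CD^\rev}(\CM,\CM)\simeq\CC$ lands in $\FZ(\CC)$ because its values are still left $\CC$-module functors, and this visibly inverts $Z\mapsto Z\odot-$. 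The braiding check is then a single commutative diagram comparing $\beta_{c,c'}\odot\Id_x$ with $\Id_x\odot\beta_{d',d}$ via the intertwiners $c'\odot-\simeq-\odot d'$.

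Your route via $G\mapsto G\boxtimes_\CD\Id_\CN$ is the standard $2$-categorical argument and is perfectly valid: an invertible $1$-cell in a bicategory induces equivalences on mapping categories by tensoring. What buys you trouble is your plan to justify the coherence by invoking Theorem~\ref{thm:tensor-prod} and Corollary~\ref{cor:tensor-prod}. Those results require $\CC$ rigid and the modules to be of the form $\RMod_M(\CC)$, whereas the proposition is stated for \emph{arbitrary} monoidal categories; appealing to them would restrict the generality and is in any case unnecessary. The cancellation you want follows formally once you upgrade the given equivalences $\CM\boxtimes_\CD\CN\simeq\CC$, $\CN\boxtimes_\CC\CM\simeq\CD$ to an adjoint equivalence (always possible) so that the triangle identities hold; then $(-)\boxtimes_\CD\Id_\CN$ and $(-)\boxtimes_\CC\Id_\CM$ are mutually inverse on the nose, with no need for algebra realizations. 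With that correction your argument is clean and arguably more conceptual than the paper's, while the paper's is shorter and avoids bicategorical bookkeeping. Your braiding sketch agrees with the paper's diagram.
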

\begin{proof}
Let ${}_\CD\CN_\CC$ be an inverse of $\CM$. The evident functor $\CC \to \Fun_{\CD^\rev}(\CM,\CM)$ is a monoidal equivalence with its quasi-inverse given by the composed equivalence $\Fun_{\CD^\rev}(\CM,\CM) \simeq \Fun_{\CC^\rev}(\CM\boxtimes_\CD \CN, \CM\boxtimes_\CD\CN) \simeq \Fun_{\CC^\rev}(\CC,\CC)\simeq \CC$. It follows immediately that the evident monoidal functor $\FZ(\CC) \to \Fun_{\CC|\CD}(\CM,\CM)$ is an equivalence. The proof of the evident monoidal functor $\Fun_{\CC|\CD}(\CM,\CM) \leftarrow \FZ(\CD)$ being an equivalence is similar.

Suppose the induced equivalence $\FZ(\CC) \to \FZ(\CD)$ carries $c,c'$ to $d,d'$, respectively. We have the following commutative diagram for $x\in \CM$:
$$
\xymatrix{
  c\odot (c'\odot x) \ar[r]^\sim \ar[d]_{\beta_{c,c'}\odot\Id_x} & (c'\odot x)\odot d \ar[r]^\sim \ar[d]^\simeq & (x\odot d')\odot d \ar[d]^{\Id_x\odot\beta_{d',d}} \\
  c'\odot (c\odot x) \ar[r]^\sim & c'\odot (x\odot d) \ar[r]^\sim & (x\odot d)\odot d'\, . \\
}
$$
The commutativity of the left square is due to the fact that the isomorphism $c\odot - \simeq -\odot d$ is a left $\CC$-module functor; that of the right square is due to the fact that $c'\odot - \simeq -\odot d'$ is a right $\CD$-module functor. Then the commutativity of the outer square says that the induced equivalence $\FZ(\CC) \to \FZ(\CD)$ preserves braidings.
\end{proof}

\begin{lem}\label{lem:cc-inhom}
Let $\CC$ be a multi-tensor category. Regard $\CC$ as a left $\CC\boxtimes\CC^\rev$-module. We have a canonical isomorphism
$$[a\otimes b,c\otimes d] \simeq (c\boxtimes d)\otimes[\one,\one]\otimes(a^L\boxtimes b^R)$$
in $\CC\boxtimes\CC^\rev$ for $a,b,c,d\in\CC$.
\end{lem}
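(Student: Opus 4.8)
The plan is to compute the internal hom $[a\otimes b, c\otimes d]$ in $\CC\boxtimes\CC^\rev$ directly, using the reconstruction machinery already set up. First I would note that $\CC$ is a multi-tensor category, hence by Proposition \ref{prop:mod-alg} (applied to $\CC$ as a left $\CC\boxtimes\CC^\rev$-module, which is finite by Lemma \ref{lem:mod-alg}-type reasoning) we may write $\CC\simeq\RMod_M(\CC\boxtimes\CC^\rev)$ for a suitable algebra $M$. But the slicker route is to observe that $\CC$ as a $\CC\boxtimes\CC^\rev$-module is exactly $\CC$ viewed as a $\CC$-$\CC$-bimodule, i.e. the "regular" bimodule, so we are in the situation where the action of $a\boxtimes b$ on $c\in\CC$ is $a\otimes c\otimes b$. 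The key structural input is Remark \ref{rem:inhom}: since $\CC\boxtimes\CC^\rev$ is rigid (being a multi-tensor category, tensor product of rigid categories), every object admits a left dual, and we get the canonical isomorphism
$$
x\otimes[p,q]\otimes z^L \simeq [z\odot p,\, x\odot q]
$$
for $x,z\in\CC\boxtimes\CC^\rev$ and $p,q\in\CC$.

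The main computation then reduces to the case $a=b=\one$: it suffices to produce the canonical isomorphism
$$
[a\otimes b,\, c\otimes d] \simeq (c\boxtimes d)\otimes [\one\otimes\one,\, \one\otimes\one]\otimes (a\boxtimes b)^L
$$
and then identify $(a\boxtimes b)^L$ with $a^L\boxtimes b^R$ (this last identification is the standard formula for duals in a tensor product of monoidal categories, where in the second factor $\CC^\rev$ the left dual is computed with the reversed tensor product, hence equals the right dual $b^R$ in $\CC$; also $\one\otimes\one = \one$ is the tensor unit). To get the displayed isomorphism I apply Remark \ref{rem:inhom} with $x = c\boxtimes d$, $z = a\boxtimes b$, $p = q = \one$: noting $(a\boxtimes b)\odot\one = a\otimes\one\otimes b = a\otimes b$ in $\CC$ — more precisely $a\otimes b$ written with the bimodule action — and $(c\boxtimes d)\odot\one = c\otimes d$, we read off
$$
(c\boxtimes d)\otimes[\one,\one]\otimes(a\boxtimes b)^L \simeq [(a\boxtimes b)\odot\one,\, (c\boxtimes d)\odot\one] = [a\otimes b,\, c\otimes d].
$$

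The one point that needs care, and which I expect to be the only real obstacle, is bookkeeping the conventions: namely checking that the left dual of $a\boxtimes b$ in $\CC\boxtimes\CC^\rev$ is indeed $a^L\boxtimes b^R$ and not $a^L\boxtimes b^L$ or $a^R\boxtimes b^R$. This is because duality in $\CC^\rev$ swaps left and right duals (a right dual for the reversed tensor product is a left dual for the original, and conversely), so the "left dual in the $\CC^\rev$-slot" is the right dual $b^R$ computed in $\CC$. Once this sign-like convention is pinned down, and once one observes that $[\one,\one]$ already stands for $[\one\otimes\one,\one\otimes\one]$ with $\one=\one\boxtimes\one$ the monoidal unit of $\CC\boxtimes\CC^\rev$, the lemma follows immediately by a single application of \eqref{eq:a[xy]b}. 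No further computation is required; everything else is the routine unravelling of the bimodule-versus-module identification of $\CC$ over $\CC\boxtimes\CC^\rev$.
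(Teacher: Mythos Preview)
Your proof is correct and essentially identical to the paper's one-line argument: apply \eqref{eq:a[xy]b} from Remark~\ref{rem:inhom} with $x=y=\one$, then identify $(a\boxtimes b)^L=a^L\boxtimes b^R$ using that left duals in $\CC^\rev$ are right duals in $\CC$. One minor point worth noting: the paper more carefully says only that the specific objects $a\boxtimes b$ and $c\boxtimes d$ admit left duals rather than asserting that all of $\CC\boxtimes\CC^\rev$ is rigid---the latter can fail over a non--algebraically-closed field (cf.\ the remark following Example~\ref{exam:rigid-boxtensor})---but since Remark~\ref{rem:inhom} only requires duals for those particular objects, this does not affect the validity of your argument.
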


\begin{proof}
This follows immediately form Remark \ref{rem:inhom} because $a\boxtimes b$ and $c\boxtimes d$ admit left duals in $\CC\boxtimes\CC^\rev$.
\end{proof}

\begin{lem}\label{lem:cc-alg}
Let $\CC$ be a multi-tensor category. Then $\CC\simeq\RMod_{[\one,\one]}(\CC\boxtimes\CC^\rev)$.
\end{lem}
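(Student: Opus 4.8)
The plan is to apply the reconstruction theorem, Theorem~\ref{thm:reconst}, to the left $\CC\boxtimes\CC^\rev$-module $\CC$, with the regular object $P=\one\in\CC$. Since $\CC$ is a multi-tensor category, $\CC\boxtimes\CC^\rev$ is again a multi-tensor category (in particular rigid and it admits all finite colimits, being a finite category by Corollary~\ref{cor:prod-well-define}), and $\CC$ is a finite left $\CC\boxtimes\CC^\rev$-module, hence enriched in $\CC\boxtimes\CC^\rev$ by Lemma~\ref{lem:M-enrich-in-C}. So condition~(1) of Theorem~\ref{thm:reconst} is automatic, and the internal hom $[\one,\one]$ is an algebra in $\CC\boxtimes\CC^\rev$. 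It then remains to check condition~(2): that $[\one,-]\colon\CC\to\CC\boxtimes\CC^\rev$ is conservative and preserves coequalizers (equivalently, since everything is abelian, that it is exact and conservative). Granting this, Theorem~\ref{thm:reconst} delivers $\CC\simeq\RMod_{[\one,\one]}(\CC\boxtimes\CC^\rev)$, which is exactly the claim.

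First I would identify the functor $[\one,-]$ concretely. By Lemma~\ref{lem:cc-inhom} with $a=b=\one$ we have $[\one,c] \simeq [\one\otimes\one,\,c\otimes\one]\simeq (c\boxtimes\one)\otimes[\one,\one]$, so $[\one,-]$ is the composite of the action functor $c\mapsto c\boxtimes\one$ followed by tensoring on the right by the fixed object $[\one,\one]$ in the rigid category $\CC\boxtimes\CC^\rev$. Both of these are exact: tensoring by a fixed object in a rigid monoidal category preserves all colimits and limits by Remark~\ref{rem:rigid-(co)limits}, and $c\mapsto c\boxtimes\one$ is the action of $\CC\boxtimes\CC^\rev$ (applied to objects of the form $c\boxtimes\one$) restricted along $\CC\to\CC\boxtimes\CC^\rev$, which is exact since $\boxtimes$ is a balanced $\bk$-module functor. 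Hence $[\one,-]$ preserves coequalizers.

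For conservativity, I would argue as in the proof of Proposition~\ref{prop:mod-alg}: the composite $\Hom_{\CC\boxtimes\CC^\rev}(\one,[\one,-]) \simeq \Hom_\CC(\one\odot\one,-) = \Hom_\CC(\one,-)$. A cleaner route, avoiding the question of whether $\Hom_\CC(\one,-)$ is conservative on a general multi-tensor category, is to test against a generating set: for $a\in\CC$ one has $\Hom_{\CC\boxtimes\CC^\rev}(a\boxtimes\one,[\one,c]) \simeq \Hom_\CC(a\odot\one,c)\simeq\Hom_\CC(a,c)$, and as $a$ ranges over (say) the simple objects of $\CC$ these functors jointly detect whether $c\simeq 0$; thus if $[\one,c]\simeq 0$ then all $\Hom_\CC(a,c)\simeq 0$ and $c\simeq 0$. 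So $[\one,-]$ is conservative, and Theorem~\ref{thm:reconst} applies.

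The main obstacle is the bookkeeping in the conservativity step — one must make sure the chosen "test objects" in $\CC\boxtimes\CC^\rev$ really do detect the vanishing of $[\one,c]$, and that the adjunction isomorphisms are applied correctly given the specific $\CC\boxtimes\CC^\rev$-module structure on $\CC$ (where $a\boxtimes b$ acts by $a\otimes(-)\otimes b$). Everything else — rigidity of $\CC\boxtimes\CC^\rev$, finiteness, exactness of the relevant functors — is already packaged in the results cited above, so the proof is short once the functor $[\one,-]$ is recognized via Lemma~\ref{lem:cc-inhom}.
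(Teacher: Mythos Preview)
Your overall strategy matches the paper's --- apply Theorem~\ref{thm:reconst} with $P=\one$, using Lemma~\ref{lem:cc-inhom} to control $[\one,-]$ --- and your conservativity argument (testing against $a\boxtimes\one$) is correct and more explicit than what the paper writes. However, there is a genuine gap: you assume $\CC\boxtimes\CC^\rev$ is a multi-tensor category, in particular rigid, in order to invoke Theorem~\ref{thm:reconst} as stated and to justify exactness of $-\otimes[\one,\one]$ via Remark~\ref{rem:rigid-(co)limits}. But Lemma~\ref{lem:cc-alg} is stated in Section~2, \emph{before} the standing assumption that $k$ is algebraically closed, and without that assumption $\CC\boxtimes\CC^\rev$ need not be rigid (see the remark following Example~\ref{exam:rigid-boxtensor}: for $k'/k$ nonseparable, $\bk'\boxtimes\bk'$ has non-semisimple unit, hence is not rigid by Theorem~\ref{thm:structure}(1)).

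The paper sidesteps this by appealing to Remark~\ref{rem:reconst} rather than Theorem~\ref{thm:reconst} directly: one drops rigidity of the ambient category at the cost of checking the monad identity $[\one,-\odot\one]\simeq -\otimes[\one,\one]$ for \emph{all} objects of $\CC\boxtimes\CC^\rev$. This is exactly what Lemma~\ref{lem:cc-inhom} gives (set $a=b=\one$ there): $[\one,(c\boxtimes d)\odot\one]=[\one,c\otimes d]\simeq(c\boxtimes d)\otimes[\one,\one]$. Your computation $[\one,c]\simeq(c\boxtimes\one)\otimes[\one,\one]$ is the special case $d=\one$; you need the general one. Once you make this substitution, the rest of your argument (right exactness from the finite-monoidal-category axioms, conservativity as you wrote it) goes through.
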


\begin{proof}
Note that the functor $[\one,-]:\CC\to\CC\boxtimes\CC^\rev$ is exact and conservative. Moreover, $[\one,-\odot\one] \simeq -\otimes[\one,\one]$ by Lemma \ref{lem:cc-inhom}. Then apply Theorem \ref{thm:reconst} and Remark \ref{rem:reconst}.
\end{proof}

\begin{prop} \label{prop:center-prod}
Let $\CC$ and $\CD$ be multi-tensor categories. We have
\bnu
\item $\FZ(\CC \bigoplus \CD) \simeq \FZ(\CC) \bigoplus \FZ(\CD)$.
\item The evident embeddings $\FZ(\CC),\FZ(\CD)\hookrightarrow\FZ(\CC\boxtimes\CD)$ induces an equivalence of braided monoidal categories $\FZ(\CC)\boxtimes\FZ(\CD) \simeq \FZ(\CC\boxtimes\CD)$.
\enu
\end{prop}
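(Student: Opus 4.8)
The plan is to prove the two statements by reducing both to the reconstruction results already in hand. For part (1), the key observation is that a direct sum decomposition $\CC\bigoplus\CD$ induces a decomposition of the tensor unit $\one = \one_\CC \bigoplus \one_\CD$, and the half-braiding condition forces any object of $\FZ(\CC\bigoplus\CD)$ to respect this splitting. More precisely, I would note that the idempotent endomorphisms of $\one$ in $\CC\bigoplus\CD$ are central, and a half-braiding for $(Z,z)$ must commute with the images of these idempotents under the action; this shows $Z$ decomposes as $Z_\CC \bigoplus Z_\CD$ with the half-braiding restricting to half-braidings on each summand (since $\CC$ and $\CD$ tensor-annihilate each other inside $\CC\bigoplus\CD$). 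The inverse functor is the evident one coming from the block-diagonal embedding $\FZ(\CC)\bigoplus\FZ(\CD)\hookrightarrow\FZ(\CC\bigoplus\CD)$. Checking that this is a braided monoidal equivalence is then a routine verification, so I would state it briefly.

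For part (2), I would use the description $\FZ(\CC)\simeq\Fun_{\CC|\CC}(\CC,\CC)$ together with Lemma \ref{lem:cc-alg}, which gives $\CC\simeq\RMod_{[\one,\one]_\CC}(\CC\boxtimes\CC^\rev)$, and similarly for $\CD$ and for $\CC\boxtimes\CD$. The first step is to identify $(\CC\boxtimes\CD)\boxtimes(\CC\boxtimes\CD)^\rev \simeq (\CC\boxtimes\CC^\rev)\boxtimes(\CD\boxtimes\CD^\rev)$ as monoidal categories (reshuffling tensor factors, using that $(\CC\boxtimes\CD)^\rev = \CC^\rev\boxtimes\CD^\rev$), and under this identification the internal-hom algebra $[\one_{\CC\boxtimes\CD},\one_{\CC\boxtimes\CD}]$ corresponds to $[\one_\CC,\one_\CC]\boxtimes[\one_\CD,\one_\CD]$ — this last point follows from Lemma \ref{lem:cc-inhom} applied in each factor, or directly from the universal property of internal hom and the behavior of $\boxtimes$ under Proposition \ref{prop:prod-fun}. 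Then Proposition \ref{prop:prod-fun}(1) gives
$$
\CC\boxtimes\CD \simeq \RMod_{[\one_\CC,\one_\CC]\boxtimes[\one_\CD,\one_\CD]}\big((\CC\boxtimes\CC^\rev)\boxtimes(\CD\boxtimes\CD^\rev)\big) \simeq \RMod_{[\one_\CC,\one_\CC]}(\CC\boxtimes\CC^\rev)\boxtimes\RMod_{[\one_\CD,\one_\CD]}(\CD\boxtimes\CD^\rev),
$$
so the forgetful functors to $\CC\boxtimes\CC^\rev$ and $\CD\boxtimes\CD^\rev$ are compatible, and Proposition \ref{prop:prod-fun}(2) then yields
$$
\Fun_{\CC|\CC}(\CC,\CC)\boxtimes\Fun_{\CD|\CD}(\CD,\CD) \simeq \Fun_{\CC\boxtimes\CD|\CC\boxtimes\CD}(\CC\boxtimes\CD,\CC\boxtimes\CD),
$$
which is the underlying monoidal equivalence $\FZ(\CC)\boxtimes\FZ(\CD)\simeq\FZ(\CC\boxtimes\CD)$.

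It remains to check that this equivalence is braided and agrees with the one induced by the evident embeddings $\FZ(\CC),\FZ(\CD)\hookrightarrow\FZ(\CC\boxtimes\CD)$. For the latter: the embedding $\FZ(\CC)\hookrightarrow\FZ(\CC\boxtimes\CD)$ sends $(Z,z)$ to $(Z\boxtimes\one_\CD, z\boxtimes\mathrm{id})$, and under the module-functor picture this is exactly the image of $\phi\mapsto\phi\boxtimes\Id_\CD$; since the two embeddings have commuting images that generate (their tensor product being the monoidal equivalence above), naturality of the half-braidings forces agreement with the braiding, because a braided monoidal functor out of $\FZ(\CC)\boxtimes\FZ(\CD)$ is determined by its restrictions to the two factors together with the fact that objects of the first factor half-braid trivially past objects of the second (they live in disjoint tensor-factor directions of $\CC\boxtimes\CD$).

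The main obstacle I expect is the bookkeeping in identifying the internal-hom algebra of $\CC\boxtimes\CD$, regarded as a left $(\CC\boxtimes\CD)\boxtimes(\CC\boxtimes\CD)^\rev$-module, with the external tensor product of the corresponding algebras for $\CC$ and $\CD$ — this requires being careful that the module structure used in Lemma \ref{lem:cc-alg} is the "regular bimodule" one and that $\boxtimes$ genuinely commutes with the formation of internal homs in this setting, which is where Proposition \ref{prop:prod-fun} does the real work. Everything downstream is formal manipulation of the equivalences already established.
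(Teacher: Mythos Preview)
Your proposal is correct and follows essentially the same route as the paper. The paper's proof is a single sentence: part (1) is declared obvious, and part (2) is deduced from Proposition~\ref{prop:prod-fun} and Lemma~\ref{lem:cc-alg} together with the identification $\FZ(\CC)\simeq\Fun_{\CC\boxtimes\CC^\rev}(\CC,\CC)$ --- exactly the ingredients you invoke, though you unpack the intermediate steps (identifying the internal-hom algebra of $\CC\boxtimes\CD$ and checking compatibility with the braiding and the evident embeddings) that the paper leaves implicit.
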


\begin{proof}
(1) is obvious. (2) follows from Proposition \ref{prop:prod-fun} and Lemma
\ref{lem:cc-alg}
(recall that $\FZ(\CC)\simeq\Fun_{\CC\boxtimes\CC^\rev}(\CC,\CC)$).
\end{proof}

\begin{rem}
In view of Lemma \ref{lem:cc-inhom}, there is a canonical isomorphism $\psi_c: (\one\boxtimes c)\otimes [\one,\one] \simeq [\one,c] \simeq (c\boxtimes \one)\otimes [\one,\one]$ for $c\in\CC$. Note that $\psi_c$ induces an isomorphism $([\one,\one]\odot a) \otimes c \to c\otimes ([\one,\one]\odot a)$ for $a\in \CC$, thus equips $[\one,\one]\odot a$ with a half-braiding, so as to promote $[\one,\one]\odot a$ to an object in $\FZ(\CC)$.
\end{rem}

\void{
By the universal property of $[\one,a]\odot \one \to a$, we see that $\psi_a$ is the unique isomorphism rendering the induced diagram
$$
\xymatrix{
  & a  \\
  a\otimes ([\one,\one]\odot\one) \ar[rr] \ar[ru]^{\Id_a\otimes v} && ([\one,\one]\odot\one)\otimes a \ar[lu]_{v\otimes\Id_a} \\
}
$$
commutative. It follows that $\psi$ is compatible with the monoidal structure of $\CC$, so that $\psi$ equips $[\one,\one]\odot a$ with a half braiding for every $a\in\CC$.
}

\begin{prop} \label{prop:center-adjoint}
Let $\CC$ be a multi-tensor category. The forgetful functor $\FZ(\CC)\to\CC$ is right adjoint to $a \mapsto [\one,\one]\odot a$.
\end{prop}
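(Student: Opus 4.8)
The plan is to establish the adjunction by exhibiting unit and counit maps and checking the triangle identities, or — more efficiently — by producing a natural isomorphism $\Hom_{\FZ(\CC)}([\one,\one]\odot a, (Z,z)) \simeq \Hom_\CC(a, Z)$ for $a\in\CC$ and $(Z,z)\in\FZ(\CC)$. I would pursue the second route. First I would unwind the left-hand side: a morphism $[\one,\one]\odot a \to (Z,z)$ in $\FZ(\CC)$ is a morphism $f: [\one,\one]\odot a \to Z$ in $\CC$ that is compatible with the half-braidings, where the half-braiding on $[\one,\one]\odot a$ is the one supplied by the Remark preceding the statement (coming from $\psi_c$ via Lemma \ref{lem:cc-inhom}). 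So I need to show that this space of half-braiding-compatible morphisms is naturally identified with $\Hom_\CC(a,Z)$, with no constraint at all on the underlying morphism once we pass to $a$.

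The key structural input is Lemma \ref{lem:cc-alg}, which identifies $\CC \simeq \RMod_{[\one,\one]}(\CC\boxtimes\CC^\rev)$, together with Lemma \ref{lem:cc-inhom} and the internal-hom adjunction $\Hom_{\CC\boxtimes\CC^\rev}(a, [\one,\one]) \simeq \Hom_\CC(a\odot\one, \one)$ appropriately generalized. Concretely, using $[\one,-] : \CC \to \CC\boxtimes\CC^\rev$ and the fact that $[\one, -\odot \one] \simeq -\otimes[\one,\one]$, I would compute, for $(Z,z)\in\FZ(\CC)=\Fun_{\CC\boxtimes\CC^\rev}(\CC,\CC)$, that the object $[\one,\one]\odot a$ (with its half-braiding) corepresents the functor $(Z,z)\mapsto \Hom_\CC(a, Z)$: indeed $a$ sits inside $\FZ(\CC)$-as-bimodule-functors only through its underlying object, and the half-braiding data on $[\one,\one]\odot a$ is exactly what is needed to promote a bare morphism $a\to Z$ in $\CC$ to a $\CC\boxtimes\CC^\rev$-module-functor morphism. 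The cleanest formulation: the forgetful functor $U:\FZ(\CC)\to\CC$ is $\Hom_{\CC\boxtimes\CC^\rev}([\one,\one], -)$ composed with identifications, and its left adjoint is precomposition-with / tensoring-by $[\one,\one]$, i.e. $a\mapsto [\one,\one]\odot a$, by the standard free-module adjunction $\RMod_{[\one,\one]}(\CC\boxtimes\CC^\rev)$ is monadic over $\CC\boxtimes\CC^\rev$ with free functor $-\otimes[\one,\one]$.

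I expect the main obstacle to be bookkeeping: verifying that the half-braiding on $[\one,\one]\odot a$ defined via $\psi_c$ is precisely the one that makes the free-module/forgetful adjunction of Lemma \ref{lem:cc-alg} land in $\FZ(\CC)$ with its braided structure, rather than in some a priori different incarnation of the center. In other words, one must check the compatibility between (i) the identification $\FZ(\CC)\simeq\Fun_{\CC|\CC}(\CC,\CC) = \Fun_{\CC\boxtimes\CC^\rev}(\CC,\CC)$, (ii) the equivalence $\CC\simeq\RMod_{[\one,\one]}(\CC\boxtimes\CC^\rev)$ of Lemma \ref{lem:cc-alg}, and (iii) the half-braiding produced by $\psi_c$. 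Once these three are matched up, the adjunction is just the Eilenberg--Moore free/forgetful adjunction transported along the equivalences, and naturality in $a$ and in $(Z,z)$ is automatic. A secondary point to be careful about is that $[\one,\one]\odot a$ must be shown to actually lie in $\FZ(\CC)$ (a priori it is only an object of $\CC$ with extra structure), but this is exactly the content of the Remark immediately preceding the proposition, so I would simply cite it.
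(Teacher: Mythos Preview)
Your approach is correct but takes a different route from the paper. The paper proceeds directly: it writes down an explicit unit $a \to [\one,\one]\odot a$ (induced by the algebra unit $u: \one\boxtimes\one \to [\one,\one]$) and an explicit counit $[\one,\one]\odot b \to b$ for $b\in\FZ(\CC)$ (using the half-braiding $\beta_{-,b}$ to move $b$ past $[\one,\one]$, then applying the canonical map $v:[\one,\one]\odot\one\to\one$), and asserts these exhibit the adjunction. Your approach instead transports a free/forgetful adjunction for $[\one,\one]$-modules through the identifications of Lemma~\ref{lem:cc-alg}. One clarification: the relevant adjunction is not quite ``$\RMod_{[\one,\one]}(\CC\boxtimes\CC^\rev)$ monadic over $\CC\boxtimes\CC^\rev$'' as you phrase it, but rather the forgetful/free pair between $\BMod_{[\one,\one]|[\one,\one]}(\CC\boxtimes\CC^\rev)$ (which is $\FZ(\CC)$ via Theorem~\ref{thm:tensor-prod}) and $\RMod_{[\one,\one]}(\CC\boxtimes\CC^\rev)$ (which is $\CC$), with left adjoint $V\mapsto [\one,\one]\otimes V$. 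The paper's route is shorter and concrete; yours is more structural and explains \emph{why} the left adjoint has this particular form, at the cost of the bookkeeping you correctly anticipate --- chiefly, verifying that the half-braiding on $[\one,\one]\odot a$ produced by $\psi_c$ matches the free left $[\one,\one]$-action under these identifications.
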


\begin{proof}
Let $u:\one\boxtimes\one\to [\one,\one]$ be the unit of the algebra, and let $v:[\one,\one]\odot\one\to\one$ be the canonical morphism.
The unit map $a \simeq (\one\boxtimes\one)\odot a \xrightarrow{u\odot\Id_a} [\one,\one]\odot a$, $a\in\CC$
and the counit map $[\one,\one]\odot b \xrightarrow{\beta_{-,b}\otimes\Id} b\otimes([\one,\one]\odot\one) \xrightarrow{\Id_b\otimes v} b$, $b\in\FZ(\CC)$
exhibit the forgetful functor $\FZ(\CC)\to\CC$ right adjoint to $a \mapsto [\one,\one]\odot a$.
\end{proof}

\begin{prop} \label{prop:center-cc}
Let $\CC$ be a multi-tensor category. There is a canonical isomorphism $([a,b]_{\FZ(\CC)}\odot c)^L \simeq [a^L,c^L]_{\CC\boxtimes\CC^\rev}\odot b^L$ for $a,b,c\in\CC$.
\end{prop}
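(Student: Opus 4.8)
The plan is to compare the two sides by a Yoneda argument carried out over the multi-tensor category $\CA:=\CC\boxtimes\CC^\rev$. Recall that $\CC$ is a finite left $\CA$-module via $(x\boxtimes y)\odot m=x\otimes m\otimes y$, enriched in $\CA$ by Lemma~\ref{lem:M-enrich-in-C} with internal hom $[-,-]_\CA$, that $\CC\simeq\RMod_L(\CA)$ for $L:=[\one,\one]_\CA$ by Lemma~\ref{lem:cc-alg}, and that $\FZ(\CC)\simeq\Fun_{\CC|\CC}(\CC,\CC)=\Fun_\CA(\CC,\CC)$ acting on $\CC$ by evaluation of bimodule functors. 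The argument has three steps: (i) identify $[a,b]_{\FZ(\CC)}\odot c$ in terms of $[-,-]_\CA$; (ii) take left duals; (iii) prove a duality identity inside $\CA$.

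For step (i), apply Corollary~\ref{cor:tensor-prod}(1) to the $\CA$-module $\CC=\RMod_L(\CA)$ to get $\FZ(\CC)\simeq\CC^{\op|L}\boxtimes_\CA\CC$ via $x\boxtimes_\CA y\mapsto[-,x]_\CA^R\odot y$, so that $\FZ(\CC)$ acts on $\CC$ by $(x\boxtimes_\CA y)\odot c=[c,x]_\CA^R\odot y$. Using the defining adjunction $-\odot y\dashv[y,-]_\CA$ and the identity $\Hom_\CA(\eta^R,\delta)\simeq\Hom_\CA(\one,\delta\otimes\eta)$, one computes $\Hom_{\FZ(\CC)}(x\boxtimes_\CA y,[a,b]_{\FZ(\CC)})\simeq\Hom_\CC([a,x]_\CA^R\odot y,b)\simeq\Hom_\CA(\one,[y,b]_\CA\otimes[a,x]_\CA)$, which by Corollary~\ref{cor:tensor-prod}(2) is $\Hom_{\FZ(\CC)}(x\boxtimes_\CA y,a\boxtimes_\CA b)$. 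Hence $[a,b]_{\FZ(\CC)}\simeq a\boxtimes_\CA b$, so that $[a,b]_{\FZ(\CC)}\odot c\simeq[c,a]_\CA^R\odot b$.

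For step (ii), since $(x\otimes m\otimes y)^L\simeq y^L\otimes m^L\otimes x^L$ in the rigid category $\CC$, the contravariant functor $\gamma\mapsto(\gamma\odot m)^L$ on $\CA$ (with $m$ fixed) coincides naturally with $\gamma\mapsto\mathsf{D}(\gamma)\odot m^L$, where $\mathsf{D}\colon\CA\to\CA$ is the contravariant equivalence built from left-duality on $\CA$ and the swap of tensor factors, so $\mathsf{D}(p\boxtimes q)=q^L\boxtimes p^L$. Applying this with $m=b$ turns the left-hand side of the proposition into $\mathsf{D}([c,a]_\CA^R)\odot b^L$, so it remains (step (iii)) to show $\mathsf{D}([c,a]_\CA^R)\simeq[a^L,c^L]_\CA$ in $\CA$. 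I would prove this by Yoneda in $\CA$: starting from the rigidity identity $\Hom_\CC(w,\delta\odot a)\simeq\Hom_\CA(\delta^L,[w,a]_\CA)\simeq\Hom_\CA(\one,\delta\otimes[w,a]_\CA)$ (checked on $\delta=p\boxtimes q$ and extended by naturality), one unwinds $\Hom_\CA(\gamma,[a^L,c^L]_\CA)\simeq\Hom_\CC(\gamma\odot a^L,c^L)\simeq\Hom_\CC((\mathsf{D}^{-1}(\gamma)\odot a)^L,c^L)\simeq\Hom_\CC(c,\mathsf{D}^{-1}(\gamma)\odot a)\simeq\Hom_\CA(\one,\mathsf{D}^{-1}(\gamma)\otimes[c,a]_\CA)$ using step (ii) and the equivalence $(\cdot)^L\colon\CC\simeq\CC^\op$; on the other side $\Hom_\CA(\gamma,\mathsf{D}([c,a]_\CA^R))\simeq\Hom_\CA([c,a]_\CA^R,\mathsf{D}^{-1}(\gamma))\simeq\Hom_\CA(\one,\mathsf{D}^{-1}(\gamma)\otimes[c,a]_\CA)$. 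These agree naturally in $\gamma$, which together with steps (i)--(ii) gives the claim.

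I expect the main obstacle to be purely bookkeeping: keeping the four flavours of dual straight (left versus right, taken in $\CC$ versus in $\CA$) together with the slot-swap built into $\CA=\CC\boxtimes\CC^\rev$, and checking that the realization of $\FZ(\CC)$ used in step (i) is compatible with the precise $\FZ(\CC)$-module structure on $\CC$ implicit in the notation $[a,b]_{\FZ(\CC)}\odot c$. Beyond Corollary~\ref{cor:tensor-prod}, Lemma~\ref{lem:cc-alg}, Lemma~\ref{lem:int-hom} and the rigidity of $\CC$ and $\CA$, no new ingredient is needed; each step is a routine adjunction chase.
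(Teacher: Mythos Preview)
Your approach is correct but takes a genuinely different route from the paper. You realize $\FZ(\CC)$ as $\CC^{\op|L}\boxtimes_\CA\CC$ via Corollary~\ref{cor:tensor-prod}(1), identify $[a,b]_{\FZ(\CC)}$ with $a\boxtimes_\CA b$ by a Yoneda argument, and then track how left-duality in $\CC$ interacts with the $\CA$-action through the auxiliary swap-dual $\mathsf{D}$. The paper instead computes $[a,b]_{\FZ(\CC)}^L$ directly: by rigidity of $\FZ(\CC)$ and the defining adjunction of the internal hom one finds $\Hom_{\FZ(\CC)}([a,b]_{\FZ(\CC)}^L,x)\simeq\Hom_\CC(b^L\otimes a,x)$ for $x\in\FZ(\CC)$, whence Proposition~\ref{prop:center-adjoint} (the forgetful functor $\FZ(\CC)\to\CC$ is right adjoint to $m\mapsto[\one,\one]_\CA\odot m$) gives $[a,b]_{\FZ(\CC)}^L\simeq[\one,\one]_\CA\odot(b^L\otimes a)$; a single application of Lemma~\ref{lem:cc-inhom} then finishes. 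The paper's proof is five lines, never invokes the tensor-product realization of $\FZ(\CC)$, and avoids the functor $\mathsf{D}$ altogether. Your argument, while sound, trades the one clean use of Proposition~\ref{prop:center-adjoint} for a longer chain of adjunctions and the bookkeeping you correctly flagged as the main obstacle; what it buys is independence from Proposition~\ref{prop:center-adjoint}, so it would survive in settings where that particular adjoint is not already available.
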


\begin{proof}
We have $\Hom_{\FZ(\CC)}([a,b]_{\FZ(\CC)}^L,x) \simeq \Hom_{\FZ(\CC)}(x^R,[a,b]_{\FZ(\CC)}) \simeq \Hom_\CC(a\otimes x^R,b) \simeq \Hom_\CC(b^L\otimes a,x)$ for $x\in\FZ(\CC)$. So, $[a,b]_{\FZ(\CC)}^L \simeq [\one,\one]_{\CC\boxtimes\CC^\rev}\odot(b^L\otimes a)$ by Proposition \ref{prop:center-adjoint}. It follows that
$([a,b]_{\FZ(\CC)}\odot c)^L
\simeq ([\one,\one]_{\CC\boxtimes\CC^\rev}\odot(b^L\otimes a)) \otimes c^L
\simeq ((\one\boxtimes c^L)\otimes[\one,\one]_{\CC\boxtimes\CC^\rev}\otimes(\one\boxtimes a))\odot b^L
\simeq [a^L,c^L]_{\CC\boxtimes\CC^\rev}\odot b^L$,
where we used Lemma \ref{lem:cc-inhom} in the last step.
\end{proof}

\subsection{Structure of multi-tensor categories}


\begin{thm} \label{thm:structure}
Let $\CC$ be a multi-tensor category. We have the following assertions:
\begin{enumerate}
  \item[$(1)$] The unit object $\one$ of $\CC$ is semisimple. Assume $\one=\bigoplus_{i\in\Lambda} e_i$ with $e_i$ simple. Then $e_i\otimes e_i\simeq e_i\simeq e_i^R$ and $e_i\otimes e_j\simeq0$ for $i\neq j$.
  \item[$(2)$] Let $\CC_{i j}=e_i\otimes\CC\otimes e_j$. Then $\CC \simeq \bigoplus_{i,j\in\Lambda} \CC_{i j}$ and $\CC_{i j}\otimes\CC_{j l}\subset\CC_{i l}$.
\end{enumerate}
If, in addition, $\CC$ is indecomposable, then we have
\begin{enumerate}
  \item[$(3)$] $\CC_{i j}\not\simeq0$ for all $i,j\in \Lambda$.
  \item[$(4)$] The tensor product of $\CC$ gives a $\CC_{ii}$-$\CC_{ll}$-bimodule equivalence $\CC_{i j}\boxtimes_{\CC_{j j}}\CC_{j l} \simeq \CC_{i l}$.
  \item[$(5)$] $\CC_{i j}$ is an invertible $\CC_{i i}$-$\CC_{j j}$-bimodule.
  \item[$(6)$] For each $i\in \Lambda$, $\bigoplus_{j\in\Lambda} \CC_{i j}$ is an invertible $\CC_{ii}$-$\CC$-bimodule.
  \item[$(7)$] For each $i\in \Lambda$, we have a canonical braided monoidal equivalence $\FZ(\CC)\simeq\FZ(\CC_{i i})$.
\end{enumerate}
\end{thm}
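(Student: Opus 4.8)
The plan is to prove the seven assertions in order. Assertions $(1)$--$(2)$ produce the block (``Peirce'') decomposition of $\CC$, and for indecomposable $\CC$ the remaining assertions are deduced from the behaviour of the relative tensor product over a corner $\CC_{jj}$, together with Proposition~\ref{prop:inv-bimod-center}. For $(1)$ I would deduce the semisimplicity of the unit object of a multi-tensor category (cf.\ \cite{egno}) from rigidity: since $\CC$ is rigid, $\otimes$ is exact (Remark~\ref{rem:rigid-(co)limits}), which forces every subobject of $\one$ to be a direct summand; hence $\one\simeq\bigoplus_{i\in\Lambda}e_i$ with the $e_i$ simple and pairwise non-isomorphic, the unit constraints together with the fact that $e_i\otimes e_j\hookrightarrow\one\otimes\one\simeq\one$ is a subobject give $e_i\otimes e_j\simeq\delta_{ij}e_i$, and applying rigidity to $\one\simeq\one^R$ yields $e_i^R\simeq e_i$ (hence also $e_i^L\simeq e_i$). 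Assertion $(2)$ is then formal: $X\simeq(\bigoplus_ie_i)\otimes X\otimes(\bigoplus_je_j)$ gives $\CC\simeq\bigoplus_{i,j}\CC_{ij}$, while $e_j\otimes e_j\simeq e_j$ and $e_j\otimes e_{j'}\simeq0$ ($j\ne j'$) give $\CC_{ij}\otimes\CC_{jl}\subset\CC_{il}$ and $\CC_{ij}\otimes\CC_{j'l}\simeq0$ for $j\ne j'$; note that each $\CC_{ii}$ is then a finite monoidal category with simple unit $e_i$, and by the same dual computation it is rigid, i.e.\ a tensor category.

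Assume now that $\CC$ is indecomposable. For $(3)$, declare $i\sim j$ when $\CC_{ij}\ne0$; this is reflexive ($e_i\in\CC_{ii}$) and symmetric (if $0\ne X\in\CC_{ij}$ then $0\ne X^R\in\CC_{ji}$). For transitivity, let $0\ne X\in\CC_{ij}$ and $0\ne Y\in\CC_{jl}$; the counit $Y\otimes Y^R\to\one$ is nonzero by the zig-zag identity and, since $Y\otimes Y^R\in\CC_{jj}$, factors through $e_j$, so right exactness of $X\otimes-$ yields a surjection $X\otimes Y\otimes Y^R\twoheadrightarrow X\otimes e_j\simeq X\ne0$; hence $0\ne X\otimes Y\in\CC_{il}$ and $\CC_{il}\ne0$. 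Thus $\sim$ is an equivalence relation, and if it had more than one class, grouping $\Lambda$ into two nonempty unions of classes would exhibit $\CC$ as a direct sum of two nonzero multi-tensor subcategories (each closed under $\otimes$ and under duals, with its own partial unit), contradicting indecomposability; hence $\CC_{ij}\ne0$ for all $i,j$.

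For $(4)$, by $(2)$ the functor $\otimes\colon\CC_{ij}\times\CC_{jl}\to\CC_{il}$ is balanced over the tensor category $\CC_{jj}$ (associativity of $\otimes$), hence factors through a $k$-linear $\CC_{ii}$-$\CC_{ll}$-bimodule functor $\underline\otimes\colon\CC_{ij}\boxtimes_{\CC_{jj}}\CC_{jl}\to\CC_{il}$, whose source is a finite category by Corollary~\ref{cor:prod-well-define}. I would show $\underline\otimes$ is an equivalence in two steps. \emph{Full faithfulness}: realizing the corners as module categories over algebras in $\CC_{jj}$ (Proposition~\ref{prop:mod-alg}, Remark~\ref{rem:lm-mod}), Corollary~\ref{cor:tensor-prod}(2) rewrites $\Hom(x\boxtimes y,x'\boxtimes y')$ in $\CC_{ij}\boxtimes_{\CC_{jj}}\CC_{jl}$ as a $\Hom$ in $\CC_{jj}$ between suitable tensor products of $x,x',y,y'$ with their $\CC$-duals; since internal homs over $\CC_{jj}$ between corner objects are computed by tensoring with $\CC$-duals (Lemma~\ref{lem:int-hom}, Remark~\ref{rem:inhom}), a direct rigidity computation in $\CC$ identifies this group with $\Hom_\CC(x\otimes y,x'\otimes y')=\Hom_{\CC_{il}}(\underline\otimes(x\boxtimes y),\underline\otimes(x'\boxtimes y'))$. \emph{Essential surjectivity}: fixing $0\ne Y\in\CC_{jl}$ (which exists by $(3)$), the counit $Y^L\otimes Y\twoheadrightarrow e_l$ gives, for each $Z\in\CC_{il}$, a surjection $(Z\otimes Y^L)\otimes Y=\underline\otimes\bigl((Z\otimes Y^L)\boxtimes Y\bigr)\twoheadrightarrow Z\otimes e_l\simeq Z$; since $\underline\otimes$ is right exact and fully faithful, this forces it to be essentially surjective, hence an equivalence. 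Assertion $(5)$ is then immediate: $(4)$ with $l=i$ gives $\CC_{ij}\boxtimes_{\CC_{jj}}\CC_{ji}\simeq\CC_{ii}$ as $\CC_{ii}$-bimodules, and with $(i,j,l)$ replaced by $(j,i,j)$ gives $\CC_{ji}\boxtimes_{\CC_{ii}}\CC_{ij}\simeq\CC_{jj}$ as $\CC_{jj}$-bimodules, so $\CC_{ij}$ is invertible with inverse $\CC_{ji}$.

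For $(6)$, set $\CM_i:=\bigoplus_{j}\CC_{ij}$ and $\CN_i:=\bigoplus_{j}\CC_{ji}$; a direct check identifies $\CM_i\simeq\LMod_{e_i}(\CC)$ as a right $\CC$-module and $\CN_i\simeq\RMod_{e_i}(\CC)$ as a left $\CC$-module, each carrying the evident left, resp.\ right, $\CC_{ii}$-action. On one hand, Theorem~\ref{thm:tensor-prod}(1) (with $M=N=e_i$) gives $\CM_i\boxtimes_\CC\CN_i\simeq\BMod_{e_i|e_i}(\CC)$ via $\otimes$, and since $e_i\otimes e_i\simeq e_i$ the forgetful functor identifies $\BMod_{e_i|e_i}(\CC)\simeq\CC_{ii}$; this is an equivalence of $\CC_{ii}$-bimodules. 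On the other hand, the $\CC_{ii}$-action preserves each summand of $\CN_i=\bigoplus_j\CC_{ji}$ and of $\CM_i=\bigoplus_l\CC_{il}$, so $\CN_i\boxtimes_{\CC_{ii}}\CM_i\simeq\bigoplus_{j,l}(\CC_{ji}\boxtimes_{\CC_{ii}}\CC_{il})\simeq\bigoplus_{j,l}\CC_{jl}=\CC$ by $(4)$ applied termwise, an equivalence of $\CC$-bimodules. Hence $\CM_i$ is an invertible $\CC_{ii}$-$\CC$-bimodule, and $(7)$ follows from $(6)$ and Proposition~\ref{prop:inv-bimod-center}: $\CM_i$ yields braided monoidal equivalences $\FZ(\CC_{ii})\simeq\Fun_{\CC_{ii}|\CC}(\CM_i,\CM_i)\simeq\FZ(\CC)$. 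The main obstacle is the full-faithfulness half of $(4)$ --- performing the $\Hom$-computation in $\CC_{ij}\boxtimes_{\CC_{jj}}\CC_{jl}$ and matching it, via the rigidity of $\CC$, with $\Hom_{\CC_{il}}$; once $(4)$ is established, $(5)$, $(6)$ and $(7)$ are formal consequences of it together with the machinery already developed.
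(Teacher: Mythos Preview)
Your outline is correct and the logical structure $(1)$--$(7)$ matches the paper's. The treatments of $(2)$, $(3)$, $(5)$, and $(7)$ are essentially the same as in the paper. Your argument for $(1)$ is a bit loose, however: the implication ``$\otimes$ is exact, hence every subobject of $\one$ is a direct summand'' is not immediate from exactness alone. The paper gives a direct rigidity argument showing that for any simple subobject $e\hookrightarrow\one$ one has $e\simeq e^R$ and that the composite $e\hookrightarrow\one\twoheadrightarrow e^R$ is an isomorphism (since tensoring with $e$ makes it one). Your citation of \cite{egno} covers this, but you should not present exactness of $\otimes$ as the mechanism.

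The genuine methodological difference is in $(4)$ and $(6)$. For $(4)$ the paper does not attempt your full-faithfulness/essential-surjectivity argument; instead it identifies $\CC_{ij}\boxtimes_{\CC_{jj}}\CC_{jl}$ with $\BMod_{M|N}(\CC_{jj})$ via Theorem~\ref{thm:tensor-prod}, constructs an exact conservative functor $G':\CC_{il}\to\CC_{jj}$, $y\mapsto A\otimes y\otimes B^L$ (for suitable generators $A\in\CC_{ji}$, $B\in\CC_{jl}$), and concludes by Barr--Beck that both sides are monadic over $\CC_{jj}$ with isomorphic monads $x\mapsto M\otimes x\otimes N$. This sidesteps the Hom computation you sketch---which is correct in principle but requires care in matching the internal-hom formula of Corollary~\ref{cor:tensor-prod}(2) (stated for $\CM^{\op|L}\boxtimes_\CC\CN$, not for $\CC_{ij}\boxtimes_{\CC_{jj}}\CC_{jl}$ directly) with the ambient $\CC$-duals. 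Your essential-surjectivity step is fine once full faithfulness is known: with $F=\underline{\otimes}$ right exact and fully faithful and every object a quotient of something in the image, choose $F(a)\twoheadrightarrow Z$, cover the kernel by some $F(a')$, and then $Z\simeq F(\mathrm{coker}(a'\to a))$ by right exactness---but note this needs two applications of the covering hypothesis, not one. For $(6)$ the paper simply reuses the Barr--Beck argument of $(4)$, whereas your route via $\CM_i\simeq\LMod_{e_i}(\CC)$, Theorem~\ref{thm:tensor-prod}(1), and distributivity of $\boxtimes_{\CC_{ii}}$ over the direct-sum decomposition together with $(4)$ is a clean alternative. The paper's approach buys a single uniform machine (monadicity) that handles both $(4)$ and $(6)$ at once; yours is more elementary but makes the Hom-matching in $(4)$ the crux, and that step is precisely the part you have left as a sketch.
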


\begin{proof}
$(1)$ Let $e$ be a simple subobject of $\one$. Then $e^R$ is a simple quotient of $\one$. Since the counit map $e\otimes e^R\to\one$ is nonzero, $e\otimes e^R$ is nonzero. So, the induced injective map $e\otimes e^R \hookrightarrow \one\otimes e^R \simeq e^R$ must be an isomorphism. Similarly, $e^R\otimes e\simeq e^R$. By considering the unit map $\one\to e^R\otimes e$, we obtain $e\simeq e^R\otimes e\simeq e\otimes e^R$. Consequently, $e\otimes e\simeq e\simeq e^R$. Moreover, the composed map $e\hookrightarrow\one\twoheadrightarrow e^R$ is nonzero, because tensoring with $e$ gives an isomorphism. This shows that $e$ is a direct summand of $\one$, thus $\one$ is semisimple. Note that tensoring with $e$ annihilates all simple summands of $\one$ other than $e$. We obtain $e_i\otimes e_j\simeq0$ for $i\neq j$.

$(2)$ is a consequence of $(1)$.

$(3)$ We introduce a binary relation on $\Lambda$ defined by $i\sim j$ if $\CC_{i j}\not\simeq0$. We first show that $\sim$ is an equivalence relation. Clearly, $\sim$ is reflexive. Moreover, $\sim$ is symmetric, because $\delta^R$ induces an equivalence $\CC_{i j}\simeq\CC_{j i}$. Suppose $i\sim j$ and $j\sim l$. Choose nonzero $a\in\CC_{i j}$ and $b\in\CC_{j l}$. Then $\Hom_\CC(a\otimes b,a\otimes b) \simeq \Hom_\CC(a^L\otimes a,b\otimes b^L)$, which contains the nonzero composed morphism $a^L\otimes a\to e_j\to b\otimes b^L$. Thus $a\otimes b\not\simeq0$, i.e. $i\sim l$. This shows that $\sim$ is transitive.

Since $\CC$ is indecomposable, $\Lambda$ can only have a single equivalence class. Therefore, $\CC_{ij}\not\simeq 0$ for all $i,j\in \Lambda$.

$(4)$ Assume $\CC_{j i}=\RMod_A(\bk)$ and $\CC_{j l}=\RMod_B(\bk)$. We define $M=[A,A]_{\CC_{j j}}\simeq A\otimes A^L$ and $N=[B,B]_{\CC_{j j}}\simeq B\otimes B^L$. Then $\CC_{i j}\simeq\LMod_M(\CC_{j j})$, $\CC_{j l}\simeq\RMod_N(\CC_{j j})$, and $\CC_{i j}\boxtimes_{\CC_{j j}}\CC_{j l} \simeq \BMod_{M|N}(\CC_{j j})$. The forgetful functor $G: \BMod_{M|N}(\CC_{j j}) \to \CC_{j j}$ is right adjoint to $F:x\mapsto M\otimes x\otimes N$. The functor $G': \CC_{i l} \to \CC_{j j}$, $y\mapsto A\otimes y\otimes B^L$ is right adjoint to $F': x\mapsto A^L\otimes x\otimes B$. Clearly, both $G,G'$ are exact and conservative. Invoking the Barr-Beck theorem, we see that the functors $G$ and $G'$ exhibit $\BMod_{M|N}(\CC_{j j})$ and $\CC_{i l}$ monadic over $\CC_{j j}$, respectively. From the evident isomorphism of monads $G\circ F\simeq G'\circ F'$, we conclude that $\BMod_{M|N}(\CC_{j j}) \simeq \CC_{il} $.

Moreover, the composed equivalence $\CC_{i j}\boxtimes_{\CC_{j j}}\CC_{j l} \simeq \BMod_{M|N}(\CC_{j j})\simeq \CC_{il}$ carries $x\boxtimes_{\CC_{jj}} y$ to $x\otimes y$, which is clearly a $\CC_{ii}$-$\CC_{ll}$-bimodule functor.

$(5)$ follows from $(4)$.

$(6)$ Using a similar argument as the proof of $(4)$, we deduce the equivalences $(\bigoplus_{j\in\Lambda} \CC_{i j}) \boxtimes_\CC (\bigoplus_{k\in\Lambda} \CC_{k i}) \simeq \CC_{i i}$ and $(\bigoplus_{k\in\Lambda} \CC_{k i}) \boxtimes_{\CC_{i i}} (\bigoplus_{j\in\Lambda} \CC_{i j}) \simeq \CC$.

$(7)$ follows from $(6)$ and Proposition \ref{prop:inv-bimod-center}.
\end{proof}

\begin{rem}
The parts (1)(2) of Theorem \ref{thm:structure} were proved in \cite{egno} under a weaker condition.
\end{rem}

\void{
\begin{prop} \label{prop:surjective}
Let $\CC$ be a tensor category such that $\Hom_\CC(\one,\one)\simeq k$. The forgetful functor $F:\FZ(\CC)\to\CC$ is surjective in the sense that every object $a\in\CC$ is a subobject of some $F(b)$.
\end{prop}

\begin{proof}
Since $\Hom_\CC(\one,\one)\simeq k$, the unit $u:\one\boxtimes\one\to [\one,\one]$ is injective. So, $a$ is a subobject of $[\one,\one]\odot a$ for $a\in\CC$ by Lemma \ref{lem:cc-exact}.
\end{proof}
}

\begin{cor}  \label{cor:matrix}
Let $\CC$ be a multi-tensor category over an algebraically closed field $k$. We have $\FZ(\CC)\simeq\bk$ if and only if $\CC\simeq\Fun_\bk(\bk^n,\bk^n)$ for some integer $n\ge1$.
\end{cor}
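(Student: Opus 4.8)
The plan is to prove both implications, the reverse one being quick and the forward one (from triviality of the center to being a matrix category) carrying all the weight.

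\textbf{The easy direction.} If $\CC\simeq\Fun_\bk(\bk^n,\bk^n)$, then $\bk^n$ is an invertible $\CC$-$\bk$-bimodule (standard Morita theory for finite $\bk$-linear categories), so Proposition~\ref{prop:inv-bimod-center} gives a braided equivalence $\FZ(\CC)\simeq\FZ(\bk)$, and $\FZ(\bk)\simeq\Fun_{\bk|\bk}(\bk,\bk)\simeq\BMod_{k|k}(\bk)\simeq\bk$. (One can equally note that $\Fun_\bk(\bk^n,\bk^n)$ is an indecomposable multi-tensor category with every diagonal block $\simeq\bk$, so Theorem~\ref{thm:structure}(7) gives $\FZ(\CC)\simeq\FZ(\bk)\simeq\bk$.)

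\textbf{Reduction of the hard direction.} Assume $\FZ(\CC)\simeq\bk$. Then $\CC\neq0$, and $\CC$ is indecomposable: a nontrivial splitting $\CC\simeq\CC_1\oplus\CC_2$ would give $\FZ(\CC)\simeq\FZ(\CC_1)\oplus\FZ(\CC_2)$ with both summands nonzero (each contains a nonzero unit), contradicting indecomposability of $\bk$. So Theorem~\ref{thm:structure} applies: with $\one=\bigoplus_{i\in\Lambda}e_i$ and $\CC_{ij}=e_i\otimes\CC\otimes e_j$ we get $\CC\simeq\bigoplus_{i,j}\CC_{ij}$, each $\CC_{ij}$ an invertible $\CC_{ii}$-$\CC_{jj}$-bimodule, and a braided equivalence $\FZ(\CC_{ii})\simeq\FZ(\CC)\simeq\bk$ for every $i$, where each $\CC_{ii}$ is a \emph{tensor} category (its unit $e_i$ is simple). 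Thus it suffices to prove the \emph{key claim}: a tensor category $\CD$ over an algebraically closed field $k$ with $\FZ(\CD)\simeq\bk$ is equivalent to $\bk$. Granting this, all $\CC_{ii}\simeq\bk$, hence each $\CC_{ij}$ is an invertible $\bk$-$\bk$-bimodule and therefore $\CC_{ij}\simeq\bk$; feeding this into Theorem~\ref{thm:structure}(2),(4) presents $\CC$ as $\bigoplus_{i,j\in\Lambda}\CC_{ij}$ with all blocks $\simeq\bk$ and multiplication $\CC_{ij}\boxtimes_\bk\CC_{jl}\xrightarrow{\sim}\CC_{il}$, i.e.\ as the matrix category $\Fun_\bk(\bk^n,\bk^n)$ with $n=|\Lambda|$ (identifying such a category with $\Fun_\bk(\bk^n,\bk^n)$ is routine, as the relevant associator obstruction for the pair groupoid on $\Lambda$ vanishes).

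\textbf{Proof of the key claim.} Let $\CD$ be a tensor category over algebraically closed $k$ with $\FZ(\CD)\simeq\bk$. Then $\End(\one_\CD)=k$ (as $\one_\CD$ is simple), and $\FZ(\CD)$ is semisimple with a single simple object, which must be $\one_{\FZ(\CD)}$ since $\End$ of the unit is commutative; hence every object of $\FZ(\CD)$ is a direct sum of copies of $\one_{\FZ(\CD)}$. By Proposition~\ref{prop:center-adjoint}, $L:=([\one,\one]\odot-):\CD\to\FZ(\CD)$ (internal hom in $\CD\boxtimes\CD^\rev$) is left adjoint to the forgetful functor $U$, and $U(\one_{\FZ(\CD)})=\one_\CD$; the adjunction then gives $[\one,\one]\odot a=UL(a)\simeq\one_\CD^{\oplus\dim_k\Hom_\CD(a,\one_\CD)}$, so for $a=\one_\CD$ we obtain $[\one,\one]\odot\one_\CD\simeq\one_\CD$. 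Applying $[\one_\CD,-]$ and Remark~\ref{rem:inhom} (valid because $\CD\boxtimes\CD^\rev$ is rigid) yields $[\one,\one]\otimes[\one,\one]\simeq[\one_\CD,[\one,\one]\odot\one_\CD]\simeq[\one_\CD,\one_\CD]=[\one,\one]$ in $\CD\boxtimes\CD^\rev$. Now the multiplication $m\colon[\one,\one]\otimes[\one,\one]\to[\one,\one]$ is split epi (unit axiom), so $[\one,\one]\otimes[\one,\one]\simeq[\one,\one]\oplus\ker m$; comparing with the isomorphism just obtained in the finite-length category $\CD\boxtimes\CD^\rev$ forces $\ker m=0$, i.e.\ $m$ is invertible. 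Since $\one_\CD$ is simple and $k$ algebraically closed, $\one_{\CD\boxtimes\CD^\rev}=\one_\CD\boxtimes\one_\CD$ is simple, so the nonzero unit $u\colon\one_{\CD\boxtimes\CD^\rev}\to[\one,\one]$ is monic; hence $[\one,\one]\otimes-$ is conservative, and from $m\circ(\Id\otimes u)=\Id$ with $m$ invertible we conclude that $u$ is an isomorphism, i.e.\ $[\one,\one]\simeq\one_{\CD\boxtimes\CD^\rev}$. Finally, by Lemma~\ref{lem:cc-alg} together with the identification $\FZ(\CD)\simeq\Fun_{\CD\boxtimes\CD^\rev}(\CD,\CD)\simeq\BMod_{[\one,\one]|[\one,\one]}(\CD\boxtimes\CD^\rev)$ (Theorem~\ref{thm:tensor-prod}), we get $\bk\simeq\FZ(\CD)\simeq\BMod_{\one|\one}(\CD\boxtimes\CD^\rev)=\CD\boxtimes\CD^\rev$. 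But $\CD\boxtimes\CD^\rev\simeq\bk$ forces $\CD\simeq\bk$: over algebraically closed $k$ the simple objects of $\CD\boxtimes\CD^\rev$ are the $S\boxtimes T$ with $S,T$ simple in $\CD$, so $\CD$ has a unique simple object, and semisimplicity of $\CD\boxtimes\CD^\rev$ forces $\CD$ semisimple (writing $\CD\simeq\RMod_A(\bk)$, semisimplicity of $A\otimes_kA$ forces that of $A$); a semisimple finite category with one simple object of endomorphism algebra $k$ is $\bk$.

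\textbf{Main obstacle.} The real work is the key claim, and within it the step upgrading $[\one,\one]\odot\one_\CD\simeq\one_\CD$ to $[\one,\one]\simeq\one_{\CD\boxtimes\CD^\rev}$. Both hypotheses — $k$ algebraically closed and $\one_\CD$ simple (so that $\CD\boxtimes\CD^\rev$ has simple unit and $u$ is monic) — are used essentially there; this is precisely why one must first descend from $\CC$ to its diagonal blocks $\CC_{ii}$ rather than argue with $\CC$ directly.
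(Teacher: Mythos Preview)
Your proof is correct. The overall architecture---showing $\CC$ is indecomposable, reducing to the diagonal tensor categories $\CC_{ii}$ via Theorem~\ref{thm:structure}(7), proving these are $\bk$, and reassembling---matches the paper's. The genuine difference is in the \emph{key claim} (a tensor category $\CD$ with $\FZ(\CD)\simeq\bk$ satisfies $\CD\simeq\bk$). The paper dispatches this in one line by citing \cite[Proposition~3.39]{eo}: the forgetful functor $\FZ(\CD)\to\CD$ is surjective (every object of $\CD$ is a subquotient of something in its image), and since the image consists of sums of the simple object $\one_\CD$, every object of $\CD$ is a sum of copies of $\one_\CD$. Your route is instead internal to the paper: you use the left adjoint of Proposition~\ref{prop:center-adjoint} to get $[\one,\one]\odot\one_\CD\simeq\one_\CD$, promote this via Lemma~\ref{lem:cc-inhom} and a length argument in the rigid category $\CD\boxtimes\CD^\rev$ (Example~\ref{exam:rigid-boxtensor}) to $[\one,\one]\simeq\one_{\CD\boxtimes\CD^\rev}$, and then read off $\CD\boxtimes\CD^\rev\simeq\FZ(\CD)\simeq\bk$ from Lemma~\ref{lem:cc-alg} and Theorem~\ref{thm:tensor-prod}. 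This is longer but self-contained, and it makes transparent that triviality of the center is exactly the condition $[\one,\one]\simeq\one$; the paper's route is much shorter but outsources the substance to \cite{eo}. One stylistic point: for the final reassembly the paper writes down the explicit monoidal functor $\CC\to\Fun_\bk(\bigoplus_i\CC_{i1},\bigoplus_i\CC_{i1})$, $c\mapsto c\otimes-$, and observes it is an equivalence---this is cleaner than your appeal to a vanishing associator obstruction for the pair groupoid, and sidesteps that discussion entirely.
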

\begin{proof}
We only need to prove the necessity. Note that $\CC$ is indecomposable by Proposition \ref{prop:center-prod}(1). Suppose $\Hom_\CC(\one,\one)\simeq k^n$.
If $\CC$ is a tensor category, then the forgetful functor $F:\FZ(\CC)\to\CC$ is surjective in the sense that every object $a\in\CC$ is a subquotient of some $F(b)$ by Proposition 3.39 in \cite{eo}. This proves the $n=1$ case. For general $n$, by Theorem \ref{thm:structure} and the $n=1$ case, we obtain $\CC_{ij}\simeq \bk$ for all $1\le i,j \le n$. Then observe that the monoidal functor $\CC\to\Fun_\bk(\bigoplus_{i=1}^n\CC_{i 1},\bigoplus_{i=1}^n\CC_{i 1})$ given by the tensor product is an equivalence.
\end{proof}

\subsection{Monoidal modules over a braided monoidal category} \label{sec:relative-over-BTC}

The following notions generalize those of a tensor category over a symmetric tensor category introduced in \cite[Definition\,4.16]{dgno}.
\begin{defn} \label{def:monoidal-modules}
Let $\CC$ and $\CD$ be finite braided monoidal categories.
\bnu
\item A {\it monoidal left $\CC$-module} is a finite monoidal category $\CM$ equipped with a $\bk$-linear braided monoidal functor $\phi_\CM:\overline\CC\to\FZ(\CM)$.
\item A {\it monoidal right $\CD$-module} is a finite monoidal category $\CM$ equipped with a $\bk$-linear braided monoidal functor $\phi_\CM:\CD\to\FZ(\CM)$.
\item A {\it monoidal $\CC$-$\CD$-bimodule} is a finite monoidal category $\CM$ equipped with a $\bk$-linear braided monoidal functor $\phi_\CM:\overline\CC\boxtimes \CD\to\FZ(\CM)$.
\enu
A monoidal $\CC$-$\CD$-bimodule is said to be {\it closed} if $\phi_\CM$ is an equivalence.
\end{defn}

\begin{rem} \label{rem:monoidal-left-right-bi}
A monoidal left $\CC$-module is precisely a monoidal right $\overline{\CC}$-module. A monoidal $\CC$-$\CD$-bimodule is precisely a monoidal right $\overline{\CC}\boxtimes \CD$-module. If $\CM$ is a monoidal $\CC$-$\CD$-bimodule, then $\CM^\rev$ is automatically a monoidal $\CD$-$\CC$-bimodule.
\end{rem}

\begin{exam}
If $\CC,\CD$ are multi-tensor categories and $\CM$ is a finite $\CC$-$\CD$-bimodule, then the category $\Fun_{\CC|\CD}(\CM,\CM)$ is naturally a monoidal $\FZ(\CC)$-$\FZ(\CD)$-bimodule (not a monoidal $\FZ(\CD)$-$\FZ(\CC)$-bimodule).
\end{exam}

\begin{rem}
In the language of $E_n$-algebras (see for example \cite{lurie2}), a monoidal category is an $E_1$-algebra in the symmetric monoidal $\infty$-category of categories, while a braided monoidal category is an $E_2$-algebra, and a symmetric monoidal category is an $E_3$-algebra. The monoidal module defined here is an $E_1$-algebra over an $E_2$-algebra. Moreover, the Drinfeld center of a monoidal category is the center of an $E_1$-algebra, and
the M\"uger center of a braided monoidal category is the center of an $E_2$-algebra.
\end{rem}

\begin{rem}
A monoidal right $\CD$-module $\CM$ is automatically equipped with a unital monoidal action $\CM \times \CD \to \CM$ induced by $\CM \times \FZ(\CM) \to \CM$. Therefore, $\CM$ is an ordinary right $\CD$-module, when we forget the braidings on $\CD$ and the monoidal structure on $\CM$.

It is also useful to encode the data $\phi_\CM: \CD \to \FZ(\CM)$ by the induced {\it central} functor $f_\CM: \CD\to \CM$ \cite{roman,dmno}. That is, $f_\CM$ is equipped with a half-braiding $f_\CM(a) \otimes y \xrightarrow{c_{a,y}} y\otimes f_\CM(a)$ for $a\in \CD,y\in \CM$ such that the following two diagrams:
$$
\xymatrix{
f_\CM(a) \otimes f_\CM(b) \otimes y \ar[rr]^-{\Id_{f_\CM(a)} \otimes c_{b,y}} & & f_\CM(a) \otimes y\otimes f_\CM(b) \ar[d]^{c_{a,y}\otimes \Id_{f_\CM(b)}} \\
f_\CM(a\otimes b) \otimes y \ar[r]^-{c_{a\otimes b, y}} \ar[u]^\sim & y \otimes f_\CM(a\otimes b) & y \otimes f_\CM(a) \otimes f_\CM(b) \ar[l]_\sim
}
$$
$$
\xymatrix{
f_\CM(a) \otimes f_\CM(b) \ar[r]^-\sim \ar[d]_{c_{a,f_\CM(b)}} & f_\CM(a\otimes b) \ar[d]^{f_\CM(\beta_{a,b})} \\
f_\CM(b) \otimes f_\CM(a) \ar[r]^-\sim & f_\CM(b\otimes a)\,
}
$$
are commutative.
\end{rem}

The following definition generalizes Definition\,2.7 in \cite{dno}.
\begin{defn} \label{def:monoidal-module-map}
Let $\CD$ be a finite braided monoidal category and $\CM,\CN$ monoidal right $\CD$-modules. A {\it monoidal $\CD$-module functor $F:\CM\to\CN$} is a $\bk$-linear monoidal functor equipped with an isomorphism of monoidal functors $F\circ f_\CM\simeq f_\CN: \CD\to\CN$ such that the evident diagram
\be  \label{diag:m-m-map}
\xymatrix{
  F(f_\CM(a)\otimes x) \ar[r]^\sim \ar[d]_\sim & F(x\otimes f_\CM(a)) \ar[d]^\sim \\
  f_\CN(a)\otimes F(x) \ar[r]^\sim & F(x)\otimes f_\CN(a) \\
}
\ee
is commutative for $a\in\CD$ and $x\in\CM$. Two right monoidal $\CD$-modules are said to be {\it equivalent} if there is an invertible monoidal $\CD$-module functor $F:\CM\to\CN$. The notions of a left and a bimodule functor are automatically defined as special cases of right module functors (recall Remark\,\ref{rem:monoidal-left-right-bi}).
\end{defn}


\begin{exam} \label{exam:invertible=braided-auto}
We give an example of monoidal bimodule equivalence. Let $\CC,\CD$ be multi-tensor categories and $\CM$ an invertible $\CC$-$\CD$-bimodule. By Proposition\,\ref{prop:inv-bimod-center}, the canonical monoidal functors $\FZ(\CC) \xrightarrow{L} \Fun_{\CC|\CD}(\CM,\CM) \xleftarrow{R} \FZ(\CD)$ are equivalences and $R^{-1}\circ L: \FZ(\CC) \to \FZ(\CD)$ preserves braidings. In this case,
\bnu
\item the monoidal functors $\FZ(\CC) \xrightarrow{L} \Fun_{\CC|\CD}(\CM,\CM) \xleftarrow{R} \FZ(\CD)$ define a monoidal $\FZ(\CC)$-$\FZ(\CD)$-bimodule structure on $\Fun_{\CC|\CD}(\CM,\CM)$;

\item the monoidal functors $\FZ(\CC) \xrightarrow{R^{-1}\circ L} \FZ(\CD) \xleftarrow{\Id} \FZ(\CD)$ define a monoidal $\FZ(\CC)$-$\FZ(\CD)$-bimodule structure on $\FZ(\CD)$.
\enu
Then the monoidal equivalence $R: \FZ(\CD) \to \Fun_{\CC|\CD}(\CM,\CM)$ defines a monoidal $\FZ(\CC)$-$\FZ(\CD)$-bimodule equivalence.
\end{exam}

\subsection{Criterion of rigidity}  \label{sec:rigidity}

Let $\CC$ be a braided multi-tensor category, $\CM$ and $\CN$ monoidal right and left $\CC$-modules, respectively. It is standard to show that $\CM\boxtimes_\CC\CN$ has a canonical structure of a monoidal category with the tensor unit given by $\one_\CM\boxtimes_\CC\one_\CN$ such that the canonical functor $\CM \boxtimes\CN \to \CM \boxtimes_\CC\CN$ is monoidal (see \cite{green}).

\begin{defn}[\cite{BD}]
An {\it r-category} is a monoidal category $\CC$ with a tensor unit $\one$ such that $\CC$ is enriched in itself and that the functor $[-,\one]:\CC\to\CC^\op$ is an equivalence. In this case, we will denote the functor $[-,\one]$ by $\FD$.
\end{defn}

\begin{rem}
Let $\CC$ be an r-category. Given an object $a\in\CC$, we have canonical morphisms $u:\one\to[a,a]$ and $v_a:\FD a\otimes a = [a,\one]\otimes a \to \one$. A morphism $a\otimes b\to\one$ induces a morphism $a\to[b,\one]=\FD b$.
\end{rem}

The following criterion of rigidity was proved in \cite{BD}.
\begin{prop}[\cite{BD}] \label{prop:rigid}
Let $\CC$ be an r-category. Then $\CC$ is rigid if and only if the composed morphism $(\FD b\otimes\FD a)\otimes(a\otimes b) \xrightarrow{\Id_{\FD b}\otimes v_a\otimes\Id_b} \FD b\otimes\one\otimes b \simeq \FD b\otimes b \xrightarrow{v_b} \one$ induces an isomorphism $\phi_{a,b}: \FD b\otimes\FD a \to \FD(a\otimes b)$ for $a,b\in\CC$.
\end{prop}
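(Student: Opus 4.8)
The plan is to prove both directions by constructing the (co)units of duality explicitly in terms of the structure maps $u$, $v_a$, and the comparison morphisms $\phi_{a,b}$. First I would settle the easy direction: suppose $\CC$ is rigid. Then for each $a$ the counit $v_a:\FD a\otimes a\to\one$ together with a unit $\one\to a\otimes\FD a$ realizes $\FD a$ as a left dual of $a$ (one checks that the r-category map $v_a$ coincides, up to the canonical identification, with the counit of the abstract dual $a^L$: both $\FD a=[a,\one]$ and $a^L$ corepresent $\Hom_\CC(-\otimes a,\one)$, so they agree canonically). Since left duals are unique up to coherent isomorphism and satisfy $(a\otimes b)^L\simeq b^L\otimes a^L$ via the composite described in the statement, the induced map $\FD b\otimes\FD a\to\FD(a\otimes b)$ is precisely $\phi_{a,b}$, and it is an isomorphism. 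That establishes necessity.

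For sufficiency, assume each $\phi_{a,b}$ is an isomorphism; I want to produce, for every object $a$, both a left dual and a right dual. The natural candidate for the left dual of $a$ is $\FD a=[a,\one]$, with counit $v_a:\FD a\otimes a\to\one$ already in hand. The work is to build a compatible unit $\coev_a:\one\to a\otimes\FD a$ and verify the two triangle identities. The idea is to use that $\CC$ is enriched in itself: the morphism $u:\one\to[a,a]$ plus the canonical evaluation-type map $[a,a]\to a\otimes\FD a$ — which I would obtain from the adjunction isomorphism $\Hom_\CC(-,[a,a])\simeq\Hom_\CC(-\odot a,a)$ applied to the image of $\Id$ under $\Hom_\CC(-\odot a,a)\simeq\Hom_\CC(-\odot a\odot\FD a,\one)$, where this last step uses exactly that $v_a$ is a counit — assemble into $\coev_a$. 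Here the hypothesis that $\phi$ is an isomorphism enters: it guarantees that the internal-hom object $[a,a]$ can be identified with $a\otimes\FD a\,(=a\otimes[a,\one])$, because $[a,a]\simeq[a,\one\otimes a]$ and by the defining property of an r-category together with the isomorphism $\phi_{a,\one}$ (or more precisely the iterated comparison maps) one can rewrite the internal hom as a tensor product with $\FD a$. Once $\coev_a$ is constructed, the triangle identities $(\Id_a\otimes v_a)\circ(\coev_a\otimes\Id_a)=\Id_a$ and $(v_a\otimes\Id_{\FD a})\circ(\Id_{\FD a}\otimes\coev_a)=\Id_{\FD a}$ reduce, via the representability of morphisms out of tensor products with $a$ into $\one$ and the fact that $\FD=[-,\one]$ is an equivalence (so $\Hom_\CC(x,y)\simeq\Hom_\CC(\FD y,\FD x)$ is faithful), to identities among the structure maps $u$, $v_a$, $v_{\FD a}$ and the $\phi$'s; these I would verify by diagram chasing.

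It remains to produce a right dual for each $a$. Since $\FD:\CC\to\CC^\op$ is an equivalence, every object has the form $\FD b$ for some $b$, and the left dual $(\FD b)^L$ constructed above will serve — more precisely, I would show $b$ is right dual to $\FD b$: the counit $v_b:\FD b\otimes b\to\one$ and a unit $\one\to b\otimes\FD b$ (the $\coev$ built above, read in the other order) exhibit the duality, again checking triangle identities by the same faithfulness argument. Applying this to an arbitrary object written as $\FD b$ gives it a right dual, so $\CC$ is rigid.

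The main obstacle is the sufficiency direction, and within it the construction of $\coev_a$: the subtle point is to see that the comparison morphisms $\phi_{a,b}$ being isomorphisms is exactly what is needed to identify the internal hom $[a,a]$ (whose existence and algebra structure we have for free from the r-category axioms) with $a\otimes\FD a$, so that the unit $u:\one\to[a,a]$ can be transported to a genuine coevaluation. Everything afterward is a diagram chase made rigorous by the faithfulness of $\FD$ and the Yoneda-style representability built into the internal-hom adjunction, but getting the identification of $[a,a]$ with $a\otimes\FD a$ — and checking it is compatible with $u$ and $v_a$ in the way the triangle identities demand — is where the real content sits.
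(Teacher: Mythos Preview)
Your overall strategy matches the paper's: for necessity, $\FD a\simeq a^L$ and $\phi_{a,b}$ is the canonical isomorphism $b^L\otimes a^L\simeq(a\otimes b)^L$; for sufficiency, transport the unit $u:\one\to[a,a]$ across an identification $[a,a]\simeq a\otimes\FD a$ to obtain a coevaluation, then check the triangle identities via the internal-hom structure. Your handling of right duals (every object is $\FD b$, and $b$ is right dual to $\FD b$) is also correct.

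However, the mechanism you propose for the key identification $[a,a]\simeq a\otimes\FD a$ does not work. The map $\phi_{a,\one}$ is an isomorphism $\FD\one\otimes\FD a\to\FD(a\otimes\one)\simeq\FD a$, which tells you nothing about $[a,a]$; and there is no formula of the shape $[a,c]\simeq c\otimes\FD a$ available in an r-category before rigidity has been established. The paper's remedy is to invoke the equivalence $\FD$ \emph{already at this step}, not only later for right duals: write $a=\FD b$. Then
\[
\Hom_\CC(x,[a,a])\simeq\Hom_\CC(x\otimes a,\FD b)\simeq\Hom_\CC(x\otimes a\otimes b,\one)\simeq\Hom_\CC(x,\FD(a\otimes b)),
\]
so $[a,a]\simeq\FD(a\otimes b)$, and \emph{now} $\phi_{a,b}^{-1}$ yields $\FD(a\otimes b)\simeq\FD b\otimes\FD a=a\otimes\FD a$. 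With the unit defined as $u_a:\one\xrightarrow{u}[a,a]\simeq\FD(a\otimes b)\xrightarrow{\phi_{a,b}^{-1}}a\otimes\FD a$, the first triangle identity becomes the evaluation $[a,a]\otimes a\to a$ applied to $u$, and the second becomes the composition $[a,\one]\otimes[a,a]\to[a,\one]$ applied to $\Id_{\FD a}\otimes u$; both are the identity by the defining property of $u$. So once you move the use of ``every object is $\FD b$'' forward to the construction of the coevaluation, your plan goes through exactly as the paper's does.
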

\void{
\begin{proof}
Necessity. We have $\FD a\simeq a^L$. The induced morphism $\phi_{a,b}$ coincides with the canonical isomorphism $b^L\otimes a^L \simeq (a\otimes b)^L$.

Sufficiency. We need to show that $\FD a$ is left dual to $a$. Assume $a=\FD b$. We have a morphism $u_a: \one \xrightarrow{u} [a,a] \simeq \FD(a\otimes b) \xrightarrow{\phi_{a,b}^{-\one}} a\otimes\FD a$.
The composed morphism
$$a \simeq \one\otimes a \xrightarrow{u_a\otimes\Id_a} a\otimes\FD a\otimes a \xrightarrow{\Id_a\otimes v_a} a\otimes\one \simeq a$$
coincides with the composed morphism
$$a \simeq \one\otimes a \xrightarrow{u\otimes\Id_a} [a,a]\otimes a \to a$$
which is the identity. The composed morphism
$$\FD a \simeq \FD a\otimes\one \xrightarrow{\Id_{\FD a}\otimes u_a} \FD a\otimes a\otimes\FD a \xrightarrow{v_a\otimes\Id_{\FD a}} \one\otimes\FD a \simeq \FD a$$
coincides with the composed morphism
$$\FD a \simeq \FD a\otimes\one \xrightarrow{\Id_{\FD a}\otimes u} [a,\one]\otimes[a,a] \to [a,\one] = \FD a$$
which is also the identity. This proves that $\CC$ is rigid.
\end{proof}
}

\begin{prop} \label{prop:rigidity}
Let $\CC$ be a braided multi-tensor category, $\CM$ and $\CN$ monoidal right and left $\CC$-modules, respectively. Suppose that $\CM,\CN$ are rigid. Then the finite monoidal category $\CM\boxtimes_\CC\CN$ is an r-category. Moreover, the following conditions are equivalent:
\begin{enumerate}
  \item The monoidal category $\CM\boxtimes_\CC\CN$ is rigid.
  \item The tensor product of $\CM\boxtimes_\CC\CN$ is left exact separately in each variable.
  \item The following equality holds for $a,b\in\CM\boxtimes_\CC\CN$
\begin{equation}\label{eqn:mt-rigid}
\mathrm{length}(a\otimes b) = \mathrm{length}(s(a)\otimes s(b))
\end{equation}
where $s(a)$ is the direct sum of the composition factors of $a$ (i.e. the simple objects associated to a composition series of $a$).
  \item Equation \eqref{eqn:mt-rigid} holds for $a=a'\boxtimes_\CC a''$, $b=b'\boxtimes_\CC b''$ where $a',b'\in\CM$ and $a'',b''\in\CN$ are simple.
\end{enumerate}
\end{prop}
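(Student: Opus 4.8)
Write $\CP:=\CM\boxtimes_\CC\CN$ for brevity. By Corollary \ref{cor:prod-well-define} and the construction of the relative tensor product of monoidal modules (\cite{green}), $\CP$ is a finite monoidal category; viewed as the regular left module over itself it is enriched in itself by Lemma \ref{lem:M-enrich-in-C}, so to see that $\CP$ is an r-category it remains to check that $\FD=[-,\one]\colon\CP\to\CP$ is an equivalence. The plan is to exploit the rigidity of $\CM$ and $\CN$: since the canonical functor $\boxtimes_\CC\colon\CM\times\CN\to\CP$ is monoidal, for $p\in\CM$ and $q\in\CN$ the object $p\boxtimes_\CC q$ has left dual $p^L\boxtimes_\CC q^L$ and right dual $p^R\boxtimes_\CC q^R$ in $\CP$, the unit/counit morphisms and triangle identities being obtained by applying the monoidal functor $\boxtimes_\CC$ to those in $\CM$ and $\CN$. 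Hence $\FD(p\boxtimes_\CC q)\simeq p^L\boxtimes_\CC q^L$, and dually the right internal hom $\FD'=[-,\one]'$ (coming from the regular right module structure on $\CP$, which exists again by Lemma \ref{lem:M-enrich-in-C}) satisfies $\FD'(p\boxtimes_\CC q)\simeq p^R\boxtimes_\CC q^R$. Both $\FD$ and $\FD'$ are $\bk$-linear and right exact; since every projective of $\CP$ is a direct summand of a finite direct sum of objects of the form $p\boxtimes_\CC q$ (these generate $\CP$ under finite colimits, by the universal property of $\boxtimes_\CC$), a standard presentation argument propagates the isomorphisms $\FD'\FD(p\boxtimes_\CC q)\simeq p\boxtimes_\CC q\simeq\FD\FD'(p\boxtimes_\CC q)$ to natural isomorphisms $\FD'\FD\simeq\Id_\CP\simeq\FD\FD'$. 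Thus $\FD$ is an equivalence and $\CP$ is an r-category.

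\textbf{The equivalences $(1)\Leftrightarrow(2)\Leftrightarrow(3)$.} Since $\CP$ is a finite monoidal category, $\otimes$ is right exact in each variable; running along composition series of $a$ and then of $b$ and using right exactness gives $length(a\otimes b)\le length(s(a)\otimes s(b))$ for all $a,b$, with equality for all $a,b$ if and only if $\otimes$ is moreover left exact in each variable. This is exactly $(2)\Leftrightarrow(3)$. For $(1)\Rightarrow(2)$: if $\CP$ is rigid, then by Remark \ref{rem:rigid-(co)limits} the tensor product preserves all limits in each variable, in particular it is left exact. For $(2)\Rightarrow(1)$: by the first paragraph $\FD$ is an equivalence, hence exact, and under $(2)$ the tensor product is exact in each variable, so for fixed $y$ both sides of the natural transformation $\phi_{-,y}\colon\FD y\otimes\FD(-)\Rightarrow\FD(-\otimes y)$ of Proposition \ref{prop:rigid} are exact functors $\CP\to\CP$. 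On elementary tensors $\phi$ is an isomorphism: for $x=p\boxtimes_\CC q$ and $y=r\boxtimes_\CC s$ the map $\phi_{x,y}$ is obtained by applying $\boxtimes_\CC$ to the comparison maps $\phi^\CM_{p,r}$ and $\phi^\CN_{q,s}$, which are isomorphisms by Proposition \ref{prop:rigid} applied to the rigid categories $\CM$ and $\CN$. Since exact functors are determined by their restriction to projectives and every projective of $\CP$ is a summand of a finite sum of elementary tensors, two applications of this observation (first varying $x$, then $y$) show that $\phi_{x,y}$ is an isomorphism for all $x,y\in\CP$; by Proposition \ref{prop:rigid}, $\CP$ is rigid.

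\textbf{The implications $(3)\Rightarrow(4)\Rightarrow(1)$.} The implication $(3)\Rightarrow(4)$ is the specialization to $a=a'\boxtimes_\CC a''$ and $b=b'\boxtimes_\CC b''$. For $(4)\Rightarrow(1)$ I would upgrade $(4)$ to $(3)$ (equivalently to $(2)$). Every object of $\CP$ is a quotient of a finite direct sum of elementary tensors, and both $\boxtimes_\CC$ and $\otimes$ are right exact; combining this with the equivalence $\FD$ and the duality isomorphisms of the first paragraph, one rewrites the two sides of the identity $length(a\otimes b)=length(s(a)\otimes s(b))$, and the missing inequality $\ge$ reduces — by bookkeeping of composition lengths across such presentations — to the case $a=a'\boxtimes_\CC a''$, $b=b'\boxtimes_\CC b''$ with $a',b'\in\CM$ and $a'',b''\in\CN$ simple, which is precisely $(4)$.

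\textbf{Main obstacle.} The delicate step is $(4)\Rightarrow(1)$: propagating the length identity from \emph{simple} elementary tensors (where $s(a'\boxtimes_\CC a'')$ is in general \emph{not} semisimple, so $(4)$ has genuine content) to all objects of $\CP$. This requires controlling how composition length behaves under the right-exact functors $\boxtimes_\CC$ and $\otimes$ and under the equivalence $\FD$, and it is exactly here that the simplicity hypothesis in $(4)$ is used. A secondary, more routine, piece of bookkeeping is the verification in the first paragraph that the right internal hom $\FD'$ is well defined and that $\FD,\FD'$ have the stated exactness, together with the identification of the comparison maps $\phi^{\CP}$ on elementary tensors with $\boxtimes_\CC$ of $\phi^\CM$ and $\phi^\CN$.
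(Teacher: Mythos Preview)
Your treatment of $(1)\Leftrightarrow(2)\Leftrightarrow(3)$ and $(3)\Rightarrow(4)$ matches the paper's. There are, however, two genuine gaps.

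\textbf{The r-category claim.} Your argument that $\FD$ is an equivalence relies on knowing that the composite $\FD'\FD$ (and $\FD\FD'$) is right exact, so that agreement with $\Id$ on elementary tensors propagates. But each of $\FD,\FD'$, as a contravariant endofunctor of $\CP$, sends right exact sequences to \emph{left} exact sequences; composing two such gives a covariant functor with no exactness in either direction a priori, so the propagation step is unjustified. The paper avoids this by producing an explicit equivalence $E:\CP\to\CP^\op$, $x\boxtimes_\CC y\mapsto x^L\boxtimes_\CC y^L$, from Corollary~\ref{cor:tensor-prod}(3), and then using Corollary~\ref{cor:tensor-prod}(2) to compute $\FD(x\boxtimes_\CC y)\simeq x^L\boxtimes_\CC y^L$. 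Since $E$ is an equivalence (hence right exact $\CP\to\CP^\op$) and $\FD$ is right exact, agreement on elementary tensors forces $\FD\simeq E$.

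\textbf{The step $(4)\Rightarrow(3)$.} Your ``bookkeeping'' sketch misses the two ideas that actually make this work, and in particular never invokes the rigidity of $\CM,\CN$ beyond the r-category paragraph. The paper's argument has a specific two-stage structure. First, a downward closure lemma: if \eqref{eqn:mt-rigid} holds for $(a,b)$ then it holds for any subquotient of $a$ (and of $b$). This follows from the chain $length(s(a)\otimes s(b)) = length(a\otimes b) \le length(a_1\otimes b)+length(a_2\otimes b) \le length(s(a_1)\otimes s(b))+length(s(a_2)\otimes s(b)) = length(s(a)\otimes s(b))$ for any short exact sequence $0\to a_1\to a\to a_2\to 0$. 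Second, since the tensor products of $\CM$ and $\CN$ are \emph{exact} in each variable (Remark~\ref{rem:rigid-(co)limits}), condition $(4)$ bootstraps to \eqref{eqn:mt-rigid} for all $a=a'\boxtimes_\CC a''$, $b=b'\boxtimes_\CC b''$ with $a',a'',b',b''$ arbitrary: one filters $a',a'',b',b''$ by simples and uses that exactness in $\CM,\CN$ turns these into honest filtrations of $a'\otimes b'$ and $a''\otimes b''$. Since every object of $\CP$ is a quotient of an elementary tensor, the downward closure lemma then gives \eqref{eqn:mt-rigid} in general. Your outline has neither the downward closure lemma nor the use of exactness of $\otimes$ in $\CM$ and $\CN$; without the latter there is no mechanism to climb from simple $a',a'',b',b''$ to arbitrary ones.
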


\begin{proof}
We have a balanced $\CC$-module functor
$$\CM\times\CN \to \CN^{\op|L}\boxtimes_\CC\CM^{L|\op}, \quad (x,y)\mapsto y^L\boxtimes_\CC x^L$$
where the isomorphism $y^L\boxtimes_\CC(x\odot a)^L \simeq (a\odot y)^L\boxtimes_\CC x^L$ is the composition
$$y^L\boxtimes_\CC(x\odot a)^L
= y^L\boxtimes_\CC(x\otimes\phi_\CM(a))^L
\simeq y^L\boxtimes_\CC(\phi_\CM(a)\otimes x)^L
= y^L\boxtimes_\CC(a\odot^L x^L)
$$
$$
\simeq (y^L\odot^L a)\boxtimes_\CC x^L
= (y\otimes\phi_\CN(a))^L\boxtimes_\CC x^L
\simeq (\phi_\CN(a)\otimes y)^L\boxtimes_\CC x^L
= (a\odot y)^L\boxtimes_\CC x^L.$$
It induces an equivalence $\CM\boxtimes_\CC\CN \simeq \CN^{\op|L}\boxtimes_\CC\CM^{L|\op}$, $x\boxtimes_\CC y\mapsto y^L\boxtimes_\CC x^L$. Therefore, the formula $x\boxtimes_\CC y\mapsto x^L\boxtimes_\CC y^L$ defines an equivalence $\CM\boxtimes_\CC\CN \simeq (\CM\boxtimes_\CC\CN)^\op$ by Corollary \ref{cor:tensor-prod}(3).

Moreover, we have $\FD(x\boxtimes_\CC y) = [x\boxtimes_\CC y, \one_\CM\boxtimes_\CC\one_\CN]_{\CM\boxtimes_\CC\CN} \simeq x^L \boxtimes_\CC y^L$. Indeed, set $\CM=\RMod_M(\CC)^{\op|L}$ for some algebra $M$ in $\CC$. We have
\begin{align*}
&\Hom_{\CM\boxtimes_\CC\CN}(m\boxtimes_\CC n, [x\boxtimes_\CC y,\one_\CM\boxtimes_\CC\one_\CN]_{\CM\boxtimes_\CC\CN})  \\
&\hspace{2cm}\simeq \Hom_{\CM\boxtimes_\CC\CN}((m\otimes x)\boxtimes_\CC (n\otimes y), \one_\CM\boxtimes_\CC\one_\CN) \\
&\hspace{2cm}\simeq \Hom_\CC(\one_\CC, [n\otimes y, \one_\CN]_\CC \otimes [\one_\CM, m\otimes x]_\CC) \\
&\hspace{2cm}\simeq \Hom_\CC(\one_\CC, [n,y^L]_\CC \otimes [x^L,m]_\CC)  \\
&\hspace{2cm}\simeq \Hom_{\CM\boxtimes_\CC\CN}(m\boxtimes_\CC n, x^L\boxtimes_\CC y^L),
\end{align*}
where the second and fourth steps are due to Corollary \ref{cor:tensor-prod}(2) and the third step follows from the following identities, for $c\in \CC$,
\begin{align*}
\Hom_\CC(c, [\one_\CM, m\otimes x]_\CC) &\simeq \Hom_{\RMod_M(\CC)}(c\otimes \one_\CM, m\otimes x) \simeq \Hom_\CM(m\otimes x, c\otimes \one_\CM) \\
&\simeq \Hom_\CM(m, c\otimes x^L)  \simeq \Hom_{\RMod_M(\CC)}(c\otimes x^L, m) \\
&\simeq \Hom_\CC(c,[x^L,m]_\CC).
\end{align*}
Therefore, $\CM\boxtimes_\CC\CN$ is an r-category.

$(1)\Rightarrow(2)\Leftrightarrow(3)\Rightarrow(4)$ are clear.

$(2)\Rightarrow(1)$. Since $\CM,\CN$ are rigid, the induced morphism $\phi_{a,b}: \FD b\otimes\FD a \to \FD(a\otimes b)$ is an isomorphism if $a,b$ lie in the essential image of the canonical functor $\CM\times\CN \to \CM\boxtimes_\CC\CN$. Since the tensor product of $\CM\boxtimes_\CC\CN$ is exact separately in each variable, both $\FD b\otimes\FD a$ and $\FD(a\otimes b)$ are exact in each variable $a,b$. So, $\phi_{a,b}$ is an isomorphism for general $a,b$. By Proposition \ref{prop:rigid}, $\CM\boxtimes_\CC\CN$ is rigid.

$(4)\Rightarrow(3)$. Since the tensor product of $\CM\boxtimes_\CC\CN$ is right exact separately in each variable, the direction $\le$ of \eqref{eqn:mt-rigid} is always true. Hence when \eqref{eqn:mt-rigid} holds, it still holds if we replace $a,b$ by their subquotients. That is, it suffices to consider the case where $a,b$ lie in the essential image of the canonical functor $\CM\times\CN \to \CM\boxtimes_\CC\CN$, say, $a=a'\boxtimes_\CC a''$, $b=b'\boxtimes_\CC b''$. We obtain the following identities:
\begin{align*}
\mathrm{length}(a\otimes b)&=\mathrm{length}\left( (s(a'\otimes b') \boxtimes_\CC s(a''\otimes b'') \right) \\
&=\mathrm{length}\left( (s(a')\otimes s(b')) \boxtimes_\CC (s(a'')\otimes s(b'')) \right) \\
&=\mathrm{length}\left( (s(a')\boxtimes_\CC s(a''))\otimes (s(b')\boxtimes_\CC s(b'')) \right) \\
&=\mathrm{length}\left( s(a) \otimes s(b) \right),
\end{align*}
where the first identity is due to the exactness of $\boxtimes_\CC$ in each variable (recall Remark\,\ref{rem:tensor-product-exact}), the second identity is due to the exactness of $\otimes$ in $\CM$ and in $\CN$, and the last identity is due to Condition $(4)$.
\end{proof}

\begin{cor} \label{cor:rigidity}
Let $\CC$ be a braided multi-tensor category, $\CM$ and $\CN$ monoidal right and left $\CC$-modules, respectively. Suppose that $\CM,\CN$ are rigid. Then the finite monoidal category $\CM\boxtimes_\CC\CN$ is rigid if $x\boxtimes_\CC y$ is semisimple for simple objects $x\in\CM$ and $y\in\CN$.
\end{cor}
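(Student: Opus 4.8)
The plan is to deduce this immediately from Proposition~\ref{prop:rigidity}, so essentially all the substantive work is already available. First I would invoke the opening assertion of that proposition: since $\CM$ and $\CN$ are rigid, $\CM\boxtimes_\CC\CN$ is an r-category. It then remains only to verify Condition~(4) of Proposition~\ref{prop:rigidity}, namely that equation~\eqref{eqn:mt-rigid} holds for $a=a'\boxtimes_\CC a''$ and $b=b'\boxtimes_\CC b''$ with $a',b'\in\CM$ and $a'',b''\in\CN$ simple; by that proposition this is enough to conclude rigidity.

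The key observation is that the hypothesis ``$x\boxtimes_\CC y$ is semisimple for simple $x\in\CM$ and $y\in\CN$'' applies verbatim to exactly these building blocks: both $a=a'\boxtimes_\CC a''$ and $b=b'\boxtimes_\CC b''$ are semisimple objects of the finite category $\CM\boxtimes_\CC\CN$. For a semisimple object one has $s(a)\simeq a$, since the direct sum of the simple grading pieces of a composition series of a semisimple object is the object itself; likewise $s(b)\simeq b$. Hence $s(a)\otimes s(b)\simeq a\otimes b$ and equation~\eqref{eqn:mt-rigid} becomes a tautological equality of lengths. Thus Condition~(4) is satisfied, and Proposition~\ref{prop:rigidity} gives that $\CM\boxtimes_\CC\CN$ is rigid.

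There is no genuine obstacle in this argument; the entire content sits in Proposition~\ref{prop:rigidity}, and the corollary is just the remark that its most computational-looking criterion, Condition~(4), is automatic under a semisimplicity assumption. The only thing worth flagging is a presentational one: rather than attempting to check Condition~(2) (left-exactness of the tensor product) by hand, it is cleaner to verify Condition~(4), whose hypothesis coincides precisely with the given semisimplicity of the $x\boxtimes_\CC y$, and then let Proposition~\ref{prop:rigidity} return exactness and rigidity as a consequence.
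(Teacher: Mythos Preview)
Your argument is correct and is exactly the intended one: the paper states the corollary immediately after Proposition~\ref{prop:rigidity} without proof, and the deduction is precisely that the semisimplicity hypothesis makes $s(a)\simeq a$ and $s(b)\simeq b$ for $a=a'\boxtimes_\CC a''$, $b=b'\boxtimes_\CC b''$, so Condition~(4) holds trivially.
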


\begin{exam}\label{exam:rigid-boxtensor}
If $\CC,\CD$ are multi-tensor categories over an algebraically closed field $k$, then $\CC\boxtimes\CD$ is also a multi-tensor category.
Actually, since $k$ is algebraically closed, $x\boxtimes y$ is simple for simple objects $x\in\CC$, $y\in\CD$. So, $\CC\boxtimes\CD$ is rigid by Corollary \ref{cor:rigidity}.
\end{exam}

\begin{rem}
Even when $\CC,\CD$ are multi-tensor categories, the finite monoidal category $\CC\boxtimes\CD$ may be not rigid. For example, let $k'$ be a nonseparable extension of $k$. The finite monoidal category $\bk'\boxtimes\bk'$ is not rigid due to Theorem \ref{thm:structure} and the fact that the unit object of $\bk'\boxtimes\bk'$ is not semisimple. 
\end{rem}

\section{The center functor} \label{sec:center-functor}

In this section, we assume that $k$ is an algebraically closed field.

\subsection{Functoriality of center} \label{sec:fun-center}

Given a multi-tensor category $\CC$ with a tensor unit $\one$, we use $I_\CC$ to denote the right dual of $[\one,\one]_{\CC^\rev\boxtimes\CC}\in\CC^\rev\boxtimes\CC$.

Given a left module $\CM$ over a rigid monoidal category $\CC$, we use ${}^{LL}_{~\CC}\CM$ to denote the left $\CC$-module which has the same underlying category as $\CM$ but equipped with the action $a\odot^{LL} x = a^{LL}\odot x$. Given a right module $\CN$ over a rigid monoidal category $\CC$, we use $\CN^{RR}_\CC$  to denote ${}^{~LL}_{\CC^\rev}\CN$.

\begin{lem}  \label{lem:fun:lem1}
Let $\CC$ be a multi-tensor category and $\CM,\CM'$ finite left $\CC$-modules. There are an equivalence
\begin{equation}\label{eq:CMLLM1}
\CC^{\op|L} \boxtimes_{\CC^\rev\boxtimes\CC} \Fun_\bk(\CM,\CM') \simeq \Fun_\CC(\CM,\CM'),
\end{equation}
which carries $\one_\CC \boxtimes_{\CC^\rev\boxtimes\CC} f$ to $I_\CC \odot f$, and an equivalence
\be  \label{eq:CMLLM2}
\CC \boxtimes_{\CC^\rev\boxtimes\CC} \Fun_\bk({}^{LL}_{~\CC}\CM,\CM') \simeq \Fun_\CC(\CM,\CM'),
\ee
which carries $\one_\CC \boxtimes_{\CC^\rev\boxtimes\CC} f$ to $I_\CC \odot f$.
\end{lem}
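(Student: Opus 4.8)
The plan is to realize $\Fun_\CC(\CM,\CM')$ as a relative tensor product and then identify both sides. First I would observe that $\Fun_\bk(\CM,\CM')$ carries a natural $\CC^\rev\boxtimes\CC$-module structure: for $a,b\in\CC$ and $f\in\Fun_\bk(\CM,\CM')$, set $(a\boxtimes b)\odot f := a\odot f(b^{RR}\odot -)$ — the $b$-variable acts through precomposition by $b^{RR}\odot-$, which converts a right $\CC$-action into a left one in a way compatible with the rigid duals (this is the reason the $LL$/$RR$ twists appear). The category $\CC^{\op|L}$ is a right $\CC^\rev\boxtimes\CC$-module via the tensor product and the dual functor $\delta^L$, matching the conventions of Section \ref{sec:tensor-prod}. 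With these structures fixed, a $\CC$-module functor $\CM\to\CM'$ is exactly the ``balanced'' data of an element of $\Fun_\bk(\CM,\CM')$ together with a trivialization of the two $\CC$-actions, which is precisely what tensoring over $\CC^\rev\boxtimes\CC$ with $\CC$ (or with $\CC^{\op|L}$) records.

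More concretely, I would reduce to the reconstruction framework already set up. By Proposition \ref{prop:mod-alg}, write $\CM=\RMod_M(\CC)$ and $\CM'=\RMod_{M'}(\CC)$ for algebras $M,M'\in\Alg(\CC)$; by Lemma \ref{lem:cc-alg} we have $\CC\simeq\RMod_{[\one,\one]}(\CC^\rev\boxtimes\CC)$ with $I_\CC=[\one,\one]^R$. Now apply Theorem \ref{thm:tensor-prod}: the relative tensor product $\CC\boxtimes_{\CC^\rev\boxtimes\CC}\Fun_\bk({}^{LL}_{~\CC}\CM,\CM')$ is computed as a category of bimodules (or equivalently of module functors) over $\CC^\rev\boxtimes\CC$, and then one checks that the monadic description of this bimodule category coincides with the monadic description of $\Fun_\CC(\RMod_M(\CC),\RMod_{M'}(\CC))$ — both are monadic over the appropriate base with the same monad, via the Barr--Beck argument used repeatedly above (e.g.\ in Proposition \ref{prop:prod-fun} and Theorem \ref{thm:structure}(4)). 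The explicit formula $\one_\CC\boxtimes_{\CC^\rev\boxtimes\CC} f\mapsto I_\CC\odot f$ then drops out of tracing $\one_\CC$ through the equivalence $\CC\simeq\RMod_{[\one,\one]}(\CC^\rev\boxtimes\CC)$: the class of $\one_\CC$ corresponds to the free module $[\one,\one]$, and the balanced functor sends it to $[\one,\one]^R\odot f=I_\CC\odot f$ after passing to right adjoints/duals as in Lemma \ref{lem:int-hom} and Remark \ref{rem:inhom}.

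For the equivalence \eqref{eq:CMLLM1}, I would deduce it from \eqref{eq:CMLLM2} by the canonical identification $\CC^{\op|L}\simeq{}^{LL}_{~\CC}\CC$ as right (resp.\ appropriately twisted) $\CC^\rev\boxtimes\CC$-modules — equivalently, one repeats the same argument with $\delta^L$ in place of the identity, using Remark \ref{rem:lm-mod} to move between $\RMod$ and $\LMod$ descriptions. The two formulas for $\one_\CC\boxtimes_{\CC^\rev\boxtimes\CC}f$ agree because under $\CC^{\op|L}\simeq{}^{LL}_{~\CC}\CC$ the distinguished object $\one_\CC$ is sent to $\one_\CC$.

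The main obstacle I anticipate is purely bookkeeping: getting the four-way interaction of ``op'', ``$L$'', ``$LL$'', ``rev'', and ``$RR$'' decorations consistent, so that the $\CC^\rev\boxtimes\CC$-actions on $\Fun_\bk({}^{LL}_{~\CC}\CM,\CM')$ and on $\CC$ (or $\CC^{\op|L}$) are exactly the ones making the balanced functor $(z,f)\mapsto$ (the induced module functor) well-defined and an equivalence. The conceptual content — ``$\CC$-linear functors $=$ $\bk$-linear functors balanced against the two residual $\CC$-actions, i.e.\ tensored down over $\CC^\rev\boxtimes\CC$'' — is forced; the work is verifying that the specific twists written in the statement are the correct ones and that the resulting monads match on the nose.
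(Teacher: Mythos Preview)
Your approach is correct in outline, but it is considerably more laborious than the paper's. The paper proves \eqref{eq:CMLLM1} in two lines: apply Corollary~\ref{cor:tensor-prod}(1) directly with base $\CC^\rev\boxtimes\CC$, left module $\CC$ (which is $\RMod_{[\one,\one]}(\CC^\rev\boxtimes\CC)$ by Lemma~\ref{lem:cc-alg}), and $\CN=\Fun_\bk(\CM,\CM')$, to obtain
\[
\CC^{\op|L}\boxtimes_{\CC^\rev\boxtimes\CC}\Fun_\bk(\CM,\CM')\;\simeq\;\Fun_{\CC^\rev\boxtimes\CC}\bigl(\CC,\Fun_\bk(\CM,\CM')\bigr),
\]
and then observe that the right-hand side is tautologically $\Fun_\CC(\CM,\CM')$: a $\CC$-$\CC$-bimodule functor out of $\CC$ is determined by its value on $\one_\CC$, and the bimodule compatibility forces that value to be $\CC$-linear. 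The explicit formula falls out immediately from the formula in Corollary~\ref{cor:tensor-prod}(1): $\one_\CC\boxtimes_{\CC^\rev\boxtimes\CC}f\mapsto[-,\one_\CC]^R\odot f$, and evaluating at $\one_\CC$ yields $[\one_\CC,\one_\CC]^R\odot f=I_\CC\odot f$ by definition of $I_\CC$. The paper then deduces \eqref{eq:CMLLM2} from \eqref{eq:CMLLM1} via $\delta^L:\CC\simeq\CC^\op$ --- the reverse order from yours.

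Your route --- writing everything in terms of $\RMod$ and running a fresh Barr--Beck comparison of monads --- would work, but it re-proves the content of Corollary~\ref{cor:tensor-prod}(1) in this specific instance rather than invoking it. The paper's argument has the advantage that all the monadic work is already packaged into that corollary; the only new ingredient is the genuinely trivial identification $\Fun_{\CC^\rev\boxtimes\CC}(\CC,\Fun_\bk(\CM,\CM'))\simeq\Fun_\CC(\CM,\CM')$. This also dissolves the bookkeeping worry you flagged: the twist conventions on $\CC^{\op|L}$ are inherited wholesale from Corollary~\ref{cor:tensor-prod}(1) and need not be re-verified.
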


\begin{proof}
By Corollary \ref{cor:tensor-prod}(1) and the definition of $I_\CC$, we see that the composed equivalence
$$
\CC^{\op|L} \boxtimes_{\CC^\rev\boxtimes\CC} \Fun_\bk(\CM,\CM')
\simeq \Fun_{\CC^\rev\boxtimes\CC}(\CC,\Fun_\bk(\CM,\CM'))
\simeq \Fun_\CC(\CM,\CM')
$$
carries $\one_\CC \boxtimes_{\CC^\rev\boxtimes\CC} f \mapsto [-,\one_\CC]^R \odot f \mapsto I_\CC \odot f$. Thus we obtain the first equivalence (\ref{eq:CMLLM1}).
Then using the equivalence $\CC\simeq\CC^\op$, $a\mapsto a^L$, we obtain (\ref{eq:CMLLM2}) from (\ref{eq:CMLLM1}). Note that (\ref{eq:CMLLM2}) carries $\one_\CC \boxtimes_{\CC^\rev\boxtimes\CC}$ to $I_\CC \odot f$.
\end{proof}


\begin{rem} \label{rem:fun:rem1}
If $\CN, \CN'$ are finite right $\CC$-modules, we have
\be
\CC^\rev \boxtimes_{\CC\boxtimes \CC^\rev} \Fun_\bk({}^{~LL}_{\CC^{\rev}}\CN, \CN') \simeq \Fun_{\CC^\rev}(\CN, \CN'),
\ee
which maps $\one_{\CC^\rev} \boxtimes_{\CC\boxtimes \CC^\rev} g \mapsto I_{\CC^\rev} \odot g$, or equivalently,
\be \label{eq:NrrNC}
\Fun_\bk(\CN_\CC^{RR}, \CN') \boxtimes_{\CC^\rev\boxtimes \CC} \CC \simeq \Fun_{\CC^\rev}(\CN, \CN')
\ee
which maps $g\boxtimes_{\CC^\rev\boxtimes \CC} \one_\CC \mapsto I_{\CC^\rev} \odot g$.
\end{rem}

\begin{lem}  \label{lem:fun:lem2}
Let $\CC,\CD$ be multi-tensor categories, $\CM$ a finite left $\CC$-module, $\CN$ a finite $\CC$-$\CD$-bimodule and $\CP$ a finite left $\CD$-module. There is an equivalence
\begin{equation*}  
\Fun_{\CC}(\CM, \CN)\boxtimes_\CD\CP \simeq \Fun_{\CC}(\CM, \CN\boxtimes_\CD\CP), \quad f\boxtimes_\CD y \mapsto f(-)\boxtimes_\CD y.
\end{equation*}
\end{lem}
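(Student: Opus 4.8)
The plan is to exhibit $\Fun_\CC(\CM,-)$ as a category of modules over an algebra in $\CC$ — a description that commutes with $-\boxtimes_\CD\CP$ in an evident way — and then to check that the resulting equivalence has the stated form. Using Proposition \ref{prop:mod-alg} write $\CM\simeq\RMod_A(\CC)$ and $\CP\simeq\RMod_B(\CD)$ for algebras $A\in\Alg(\CC)$ and $B\in\Alg(\CD)$; let $A_0\in\CM$ be the regular right $A$-module, equipped with the left $A$-action $A\odot A_0\to A_0$ coming from the multiplication of $A$. For a finite left $\CC$-module $\CK$ write $\LMod_A(\CK)$ for the category of objects of $\CK$ carrying an action of the algebra $A$ via the $\CC$-action, and similarly $\RMod_B(\CL)$ for a finite right $\CD$-module $\CL$.

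The input is two natural equivalences. First, evaluation at $A_0$, i.e. $F\mapsto F(A_0)$ with $F(A_0)$ carrying the $A$-action obtained by applying $F$ to $A\odot A_0\to A_0$, is an equivalence $\Fun_\CC(\CM,\CK)\simeq\LMod_A(\CK)$ for every finite left $\CC$-module $\CK$, natural in $\CK$: when $\CK=\RMod_N(\CC)$ this is exactly the equivalence $\Fun_\CC(\CM,\CK)\simeq\BMod_{A|N}(\CC)=\LMod_A(\RMod_N(\CC))$, $F\mapsto F(A_0)$, extracted from the proof of Theorem \ref{thm:tensor-prod}(2), and every finite left $\CC$-module has this form. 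If $\CK=\CN$ is a $\CC$-$\CD$-bimodule the right $\CD$-action is ``act on $\CK$'' on both sides, so the equivalence is right $\CD$-linear; in particular $\Fun_\CC(\CM,\CN)\simeq\BMod_{A|N}(\CC)$ is a finite right $\CD$-module by Lemma \ref{lem:mod-alg}. Second, for every finite right $\CD$-module $\CL$, writing $\CL\simeq\LMod_X(\CD)$ (Proposition \ref{prop:mod-alg}) and combining Theorem \ref{thm:tensor-prod}(1) with $\BMod_{X|B}(\CD)=\RMod_B(\LMod_X(\CD))$ gives an equivalence $\CL\boxtimes_\CD\CP\simeq\RMod_B(\CL)$, natural in $\CL$, under which $\ell\boxtimes_\CD y$ goes to the evident $B$-module built from $\ell$ and $y$; when $\CL$ is a $\CC$-$\CD$-bimodule, naturality with respect to the $\CC$-action shows this equivalence is also left $\CC$-linear.

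Granting these, the lemma follows by chaining natural equivalences,
\begin{align*}
\Fun_\CC(\CM,\CN)\boxtimes_\CD\CP
&\simeq\LMod_A(\CN)\boxtimes_\CD\CP
\simeq\RMod_B\!\big(\LMod_A(\CN)\big)
\simeq\LMod_A\!\big(\RMod_B(\CN)\big)\\
&\simeq\LMod_A(\CN\boxtimes_\CD\CP)
\simeq\Fun_\CC(\CM,\CN\boxtimes_\CD\CP),
\end{align*}
where the third equivalence is the obvious interchange of the commuting $A$- and $B$-actions on objects of $\CN$, and the last two use the second and first equivalences of the previous paragraph applied to the finite right $\CD$-module $\CN$ and the finite left $\CC$-module $\CN\boxtimes_\CD\CP$. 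To identify the composite with $f\boxtimes_\CD y\mapsto(x\mapsto f(x)\boxtimes_\CD y)$, I would chase $f\boxtimes_\CD y$: it is sent to $f(A_0)\boxtimes_\CD y$, viewed as an $A$-module in $\CN\boxtimes_\CD\CP$, and the inverse of the first equivalence carries this $A$-module to the functor $x\mapsto x\otimes_A\big(f(A_0)\boxtimes_\CD y\big)$; since $-\boxtimes_\CD y$ is a right-exact $\CC$-module functor it commutes with the relative tensor product over $A$, so $x\otimes_A\big(f(A_0)\boxtimes_\CD y\big)\simeq\big(x\otimes_A f(A_0)\big)\boxtimes_\CD y$, and since $f$ is a right-exact $\CC$-module functor, $x\otimes_A f(A_0)\simeq f(x\otimes_A A_0)\simeq f(x)$, which yields the stated formula. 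The only nonformal point — hence the main (and rather mild) obstacle — is to establish the naturality of the two equivalences, i.e. that $\Fun_\CC(\CM,-)$ is genuinely the functor $\LMod_A(-)$ and that $-\boxtimes_\CD\RMod_B(\CD)$ is genuinely the functor ``take $B$-modules for the $\CD$-action'', both obtained by unwinding the equivalences of Theorem \ref{thm:tensor-prod} (with Proposition \ref{prop:mod-alg} supplying the presentations $\RMod_A(\CC)$, $\RMod_B(\CD)$, $\LMod_X(\CD)$); the remaining ingredients — the Fubini interchange of iterated module categories, the $\CC$- and $\CD$-linearity of the equivalences, and preservation of the coequalizer realizing $-\otimes_A A_0\simeq\Id$ — are routine.
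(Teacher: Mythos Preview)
Your argument is correct, but the paper's proof is considerably shorter and avoids the bookkeeping you flag as ``the main (and rather mild) obstacle''. Rather than fixing algebra presentations of $\CM$ and $\CP$ and working with $\LMod_A(-)$ and $\RMod_B(-)$, the paper invokes Corollary~\ref{cor:tensor-prod}(1) to rewrite $\Fun_\CC(\CM,\CN)\simeq\CM^{\op|L}\boxtimes_\CC\CN$, so that the desired equivalence becomes an instance of associativity of the relative tensor product:
\[
\Fun_\CC(\CM,\CN)\boxtimes_\CD\CP \;\simeq\; \CM^{\op|L}\boxtimes_\CC\CN\boxtimes_\CD\CP \;\simeq\; \Fun_\CC(\CM,\CN\boxtimes_\CD\CP).
\]
The formula then follows by tracking the generators $[-,x]^R\odot x'$ through the two applications of Corollary~\ref{cor:tensor-prod}(1) and using the universal property of $\boxtimes_\CC$, $\boxtimes_\CD$. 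Your approach is of course equivalent in content --- the identification $\Fun_\CC(\RMod_A(\CC),\CK)\simeq\LMod_A(\CK)$ is essentially Corollary~\ref{cor:tensor-prod}(1) unpacked --- but by staying at the level of relative tensor products the paper sidesteps the naturality checks, the Fubini interchange, and the auxiliary presentation $\CL\simeq\LMod_X(\CD)$, all of which you correctly identify as routine but which still need to be said. The upshot: both proofs work, but the paper's packaging via $\CM^{\op|L}\boxtimes_\CC-$ makes the associativity structure manifest and yields a two-line argument.
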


\begin{proof}
Using Corollary \ref{cor:tensor-prod}(1) twice, we obtain a composed equivalence
\begin{align} \label{eq:M-N6}
\Fun_{\CC}(\CM, \CN)\boxtimes_\CD\CP
&\simeq \CM^{\op|L} \boxtimes_{\CC} \CN \boxtimes_{\CD} \CP
\simeq \Fun_{\CC}(\CM, \CN\boxtimes_\CD\CP)   \\
([-,x]^R\odot x')\boxtimes_\CD y &\mapsto x\boxtimes_\CC x' \boxtimes_\CD y \mapsto
[-,x]^R \odot (x'\boxtimes_\CD y). \nonumber
\end{align}
By the universal property of $\boxtimes_\CC$ and $\boxtimes_\CD$, $(\ref{eq:M-N6})$ carries $f\boxtimes_\CD y$ to $f(-)\boxtimes_\CD y$.
\end{proof}

\begin{lem}  \label{lem:fun:lem3}
Let $\CA, \CB, \CC,\CD$ be multi-tensor categories and ${}_{\CA}\CM_\CC$, ${}_\CA\CM'_\CD$, ${}_\CC\CN_\CB$, ${}_\CD\CN'_\CB$ finite bimodules. There is an equivalence
\begin{equation}  \label{eq:AMMCDNNB}
\Fun_\CA(\CM, \CM') \boxtimes_{\CC^\rev\boxtimes\CD} \Fun_{\CB^\rev}({}^{LL}_{~\CC}\CN, \CN') \simeq \Fun_{\CA|\CB}(\CM\boxtimes_\CC \CN, \CM'\boxtimes_\CD \CN')
\end{equation}
defined by $f\boxtimes_{\CC^\rev\boxtimes\CC}g \mapsto I_\CC\odot (f(-)\boxtimes_\CD g(-))$.
\end{lem}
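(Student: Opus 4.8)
The plan is to realize the equivalence as a composite of canonical equivalences that have already been established --- those of Corollary \ref{cor:tensor-prod}, Lemma \ref{lem:fun:lem1}, Remark \ref{rem:fun:rem1} and Lemma \ref{lem:fun:lem2} (and, for recombining box products of functor categories, Proposition \ref{prop:prod-fun}) --- and then to identify the image of the distinguished object $f\boxtimes g$ by chasing it through each step. Concretely, I would first dispose of the two ``outer'' module categories $\CA$ and $\CB$. Viewing the $\CA$-$\CB$-bimodules $\CM\boxtimes_\CC\CN$ and $\CM'\boxtimes_\DD\CN'$ as left $\CA\boxtimes\CB^\rev$-modules, Lemma \ref{lem:fun:lem1} (with its base category taken to be $\CA\boxtimes\CB^\rev$) rewrites $\Fun_{\CA|\CB}(\CM\boxtimes_\CC\CN,\CM'\boxtimes_\DD\CN')$ as a relative tensor product of $\CA\boxtimes\CB^\rev$ with the purely $\bk$-linear functor category $\Fun_\bk(\CM\boxtimes_\CC\CN,\CM'\boxtimes_\DD\CN')$. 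Applying the same manoeuvre (together with Remark \ref{rem:fun:rem1}) to $\Fun_\CA(\CM,\CM')$ and to $\Fun_{\CB^\rev}({}^{LL}_{~\CC}\CN,\CN')$ reduces the whole identity to the case $\CA=\CB=\bk$, at the cost of carrying along and matching up the extra copies of $\CA$ and $\CB$ --- this last part being mechanical.

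For the reduced ($\bk$-linear) statement I would use Corollary \ref{cor:tensor-prod}(2) (with base $\bk$) to turn every $\Fun_\bk$ into a plain box product $\Fun_\bk(\CX,\CY)\simeq\CX^\op\boxtimes\CY$, use Corollary \ref{cor:tensor-prod}(3) to rewrite $(\CM\boxtimes_\CC\CN)^\op$ as a relative box product of $\CM^\op$ and $\CN^\op$ over $\CC$ (up to the standard ${}^{\op|L}$/${}^{\op|R}$-twists), and then reorganize the resulting iterated box product --- pairing the two ``$\CM$-factors'' together and the two ``$\CN$-factors'' together --- using associativity of $\boxtimes$ and the fact that $\boxtimes_\CC$ and $\boxtimes_\DD$ commute with one another; reassembling via Corollary \ref{cor:tensor-prod} and Proposition \ref{prop:prod-fun} in the reverse direction then reproduces the left-hand side. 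The factor $I_\CC\odot(-)$ appearing in the claimed formula for $f\boxtimes g$ is precisely the image, under \eqref{eq:CMLLM1}--\eqref{eq:CMLLM2} and \eqref{eq:NrrNC}, of the generator $\one_\CC\boxtimes(-)$ used to resolve the copy of $\CC$ sitting inside the source $\CM\boxtimes_\CC\CN$, so that tracking generators through the chain above yields $f\boxtimes g\mapsto I_\CC\odot(f(-)\boxtimes_\DD g(-))$ on the nose.

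I expect the main obstacle to be purely organizational rather than conceptual: every category in sight carries several left/right $\CA$-, $\CB$-, $\CC$-, $\DD$- and $\bk$-module structures, and one must check that each equivalence in the chain is simultaneously an equivalence of all the relevant module categories. In particular one has to keep precise track of the ${}^{LL}$-twists --- they are forced by the requirement that the right $\CC^\rev$-module structure on $\Fun_\CA(\CM,\CM')$ (coming from the $\CC$-action on $\CM$) be compatible, under the pairing over $\CC^\rev\boxtimes\DD$, with the $\CC$-module structure on $\Fun_{\CB^\rev}({}^{LL}_{~\CC}\CN,\CN')$. These are exactly the twists already appearing in Lemma \ref{lem:fun:lem1} and Remark \ref{rem:fun:rem1}, which is the reason it is natural to invoke those two results at the very first step: they make the twists bookkeep themselves. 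Once the module structures are pinned down, verifying the asserted formula on $f\boxtimes g$ is a direct diagram chase with the explicit formulas recorded in the cited statements.
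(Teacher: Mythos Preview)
Your overall architecture --- strip off $\CA$ and $\CB$ using Lemma~\ref{lem:fun:lem1} and Remark~\ref{rem:fun:rem1}, then handle the $\bk$-linear core --- matches the paper, and the reduction step is correct. The difference is in how the $\bk$-linear case is established. The paper does \emph{not} unravel everything into plain box products via Corollary~\ref{cor:tensor-prod}; instead it runs a short chain
\[
\Fun_\bk(\CM,\CM') \boxtimes_{\CC^\rev\boxtimes\CD} \Fun_\bk({}^{LL}_{~\CC}\CN,\CN')
\;\simeq\; \CC \boxtimes_{\CC\boxtimes\CC^\rev} \bigl(\Fun_\bk(\CM,\CM') \boxtimes_\CD \Fun_\bk({}^{LL}_{~\CC}\CN,\CN')\bigr)
\]
and then applies Lemma~\ref{lem:fun:lem2} twice (to pull the tensor factors inside a $\Fun$) and Lemma~\ref{lem:fun:lem1} once (to collapse the outer $\CC\boxtimes_{\CC^\rev\boxtimes\CC}(-)$). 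The advantage of this route is that the factor $I_\CC$ appears for free at exactly one step --- the application of Lemma~\ref{lem:fun:lem1} --- so the formula $f\boxtimes g \mapsto I_\CC\odot(f(-)\boxtimes_\CD g(-))$ is read off directly from the object-chase recorded there.

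Your box-product route (decompose each $\Fun_\bk$ via Corollary~\ref{cor:tensor-prod}(1), flip $(\CM\boxtimes_\CC\CN)^\op$ via Corollary~\ref{cor:tensor-prod}(3), and reshuffle) could in principle produce an equivalence, but there is a mismatch in your justification of the formula: you invoke \eqref{eq:CMLLM1}--\eqref{eq:CMLLM2} and \eqref{eq:NrrNC} to explain where $I_\CC$ comes from, yet in your $\bk$-linear argument those equations are never used --- you only use them in the outer reduction for $\CA,\CB$, which contributes $I_\CA$ and $I_{\CB^\rev}$, not $I_\CC$. In the box-product picture the copy of $\CC$ is hidden in the relative tensor $\boxtimes_\CC$ on the source, and extracting $I_\CC$ from Corollary~\ref{cor:tensor-prod}(3) and the reshuffling is not automatic; you would effectively have to reprove the content of Lemma~\ref{lem:fun:lem1} inside your chain. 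So the equivalence is plausible, but the asserted object-level formula is not yet justified by the argument you sketch.
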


\begin{proof}
We first prove the Lemma for the special case $\CA=\bk=\CB$. We have the following composed equivalence:
\begin{align}
\Fun_\bk(\CM,\CM') &\boxtimes_{\CC^\rev\boxtimes\CD} \Fun_\bk({}^{LL}_{~\CC}\CN,\CN') \nn
&\simeq \CC \boxtimes_{\CC\boxtimes\CC^\rev} (\Fun_\bk(\CM,\CM') \boxtimes_\CD \Fun_\bk({}^{LL}_{~\CC}\CN,\CN')) \nn
&\simeq \CC \boxtimes_{\CC^\rev\boxtimes\CC} \Fun_\bk({}^{LL}_{~\CC}\CN,\,\Fun_\bk(\CM,\CM')\boxtimes_\CD \CN') \nn
&\simeq \Fun_\CC(\CN,\Fun_\bk(\CM,\CM')\boxtimes_\CD \CN') \nn
&\simeq \Fun_\CC(\CN,\Fun_\bk(\CM,\CM'\boxtimes_\CD\CN')) \nn
&\simeq \Fun_\bk(\CM\boxtimes_\CC\CN, \CM'\boxtimes_\CD\CN'), \nonumber
\end{align}
which carries objects as follows:
\begin{align*}
 &f\boxtimes_{\CC^\rev\boxtimes\CD}g \mapsto \one_\CC \boxtimes_{\CC\boxtimes\CC^\rev} (f\boxtimes_\CD g)  
\mapsto \one_\CC \boxtimes_{\CC^\rev\boxtimes\CC} (f \boxtimes_\CD g(-)) \nn
& \mapsto I_\CC \odot (f\boxtimes_\CD g(-)) \mapsto I_\CC \odot (f(-)\boxtimes_\CD g(-)) \mapsto I_\CC \odot (f(-)\boxtimes_\CD g(-)).
\end{align*}
Here, we used Lemma \ref{lem:fun:lem2} in the second and fourth steps, and used Lemma \ref{lem:fun:lem1} in the third step.

By using the canonical equivalences $\Fun_\CA(\CM, \CM')\simeq \CA\boxtimes_{\CA^\rev\boxtimes \CA}\Fun_\bk({}^{LL}_{~\CA}\CM, \CM')$ and $\Fun_{\CB^\rev}(\CN, \CN') \simeq\Fun_\bk(\CN^{RR}_\CB, \CN')\boxtimes_{\CB^\rev\boxtimes \CB} \CB$ given by Lemma \ref{lem:fun:lem1} and Remark \ref{rem:fun:rem1}, we can reduce the general case to the above special case, thus complete the proof.
\end{proof}

\begin{rem}
The natural equivalence in (\ref{eq:AMMCDNNB}) induces a natural equivalence:
\be \label{eq:AMMCDNNB-2}
\Fun_\CA(\CM^{RR}_\CC, \CM') \boxtimes_{\CC^\rev\boxtimes\CD} \Fun_{\CB^\rev}(\CN, \CN') \simeq \Fun_{\CA|\CB}(\CM\boxtimes_\CC \CN, \CM'\boxtimes_\CD \CN')
\ee
defined by $f\boxtimes_{\CC^\rev\boxtimes\CD} g\mapsto I_{\CC^\rev}\odot(f(-)\boxtimes_\CD g(-))$. Actually, it can be obtained from (\ref{eq:AMMCDNNB}) by replacing $\CA,\CB,\CC,\CD$ by their $(-)^\rev$, respectively, exchanging the letter $\CA$ with $\CB$ and exchanging the letter $\CM$ with $\CN$.
\void{
Replacing $\CA,\CB,\CC,\CD$ in (\ref{eq:AMMCDNNB}) by their $(-)^\rev$, respectively, we obtain a natural equivalence:
\be \label{eq:AMMCDNNB-2}
\Fun_\CB(\CN^{RR}, \CN') \boxtimes_{\CC^\rev\boxtimes\CD} \Fun_{\CA^\rev}(\CM, \CM') \simeq \Fun_{\CB|\CA}(\CN\boxtimes_\CC \CM, \CN'\boxtimes_\CD \CM')
\ee
defined by $g\boxtimes_{\CC^\rev\boxtimes\CD}f \mapsto I_{\CC^\rev}\odot (f(-)\boxtimes_{\CD^\rev} g(-))=I_\CC\odot(g(-)\boxtimes_\CD f(-))$.
}
\end{rem}


\begin{lem}  \label{lem:fun:lem4}
Let $\CC$ be an indecomposable multi-tensor category. The formula $a\boxtimes_{\FZ(\CC)}b \mapsto a\otimes - \otimes b$ defines an equivalence $\CC \boxtimes_{\FZ(\CC)} \CC \simeq \Fun_\bk(\CC,\CC)$ as $\CC^\rev\boxtimes\CC$-$\CC^\rev\boxtimes\CC$-bimodules.
\end{lem}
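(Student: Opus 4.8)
The plan is to realize the functor $a \boxtimes_{\FZ(\CC)} b \mapsto a \otimes - \otimes b$ as a composite of already-established equivalences, exploiting the fact that $\FZ(\CC) \simeq \Fun_{\CC^\rev\boxtimes\CC}(\CC,\CC)$ and the reconstruction result for $\CC$ as a module over $\CC^\rev\boxtimes\CC$. First I would set $\CB := \CC^\rev\boxtimes\CC$ and recall from Lemma~\ref{lem:cc-alg} that $\CC \simeq \RMod_{[\one,\one]_\CB}(\CB)$ as a left $\CB$-module. The key observation is that $\CC \boxtimes_{\FZ(\CC)} \CC$ should be computed by viewing $\CC$ as a right $\FZ(\CC)$-module and a left $\FZ(\CC)$-module via the action coming from $\CC \simeq \RMod_{I}(\FZ(\CC))$ for the appropriate algebra $I$ (this is where $I_\CC$, the right dual of $[\one,\one]_{\CC^\rev\boxtimes\CC}$, enters, via Proposition~\ref{prop:center-adjoint} which identifies the left adjoint of the forgetful functor $\FZ(\CC)\to\CC$ with $a \mapsto [\one,\one]\odot a$). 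Using Theorem~\ref{thm:tensor-prod}(2), $\CC\boxtimes_{\FZ(\CC)}\CC \simeq \Fun_{\FZ(\CC)}(\CC', \CC)$ for the corresponding opposite module $\CC'$, and then the point is to identify this with $\Fun_\bk(\CC,\CC)$.

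The cleanest route, I expect, is to run Lemma~\ref{lem:fun:lem1} in the case where the ambient rigid monoidal category is $\FZ(\CC)$ acting on $\CC$: the equivalence (\ref{eq:CMLLM2}) there is precisely of the shape $\CA \boxtimes_{\CA^\rev\boxtimes\CA}\Fun_\bk(-,-)$, and dually one wants an expression $\CC\boxtimes_{\FZ(\CC)}\CC$ on the other side. Concretely, I would invoke Lemma~\ref{lem:fun:lem3} (or rather the mechanism behind it) with $\CA=\CB=\bk$ and the roles of $\CC,\CD$ both played by $\FZ(\CC)$, together with the module structures on $\CC$ over $\FZ(\CC)$; this produces
$$\CC\boxtimes_{\FZ(\CC)}\CC \simeq \Fun_\bk(\CC\boxtimes_{\FZ(\CC)}??, \ldots),$$
but more to the point, it is the indecomposability of $\CC$ that should force $\CC$ to be an \emph{invertible} $\FZ(\CC)$-module in the appropriate two-sided sense, or at least a module whose internal endomorphism algebra recovers $\Fun_\bk(\CC,\CC)$. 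I would therefore actually prefer the argument: by Theorem~\ref{thm:structure}(7) the center is unchanged under restricting to a corner $\CC_{ii}$, and by Corollary~\ref{cor:matrix} together with Proposition~\ref{prop:center-prod}, the statement for general indecomposable $\CC$ reduces — via the invertible bimodule $\bigoplus_j \CC_{ij}$ of Theorem~\ref{thm:structure}(6) and Proposition~\ref{prop:inv-bimod-center} — to a cleaner situation; but one still has to handle the tensor-category case directly.

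So the core computation I would carry out is: $\CC \boxtimes_{\FZ(\CC)} \CC$, with left/right $\FZ(\CC)$-actions on $\CC$, is equivalent to $\Fun_\bk(\CC,\CC)$ by tracking $a\boxtimes_{\FZ(\CC)}b$. Using $\CC\simeq\RMod_{[\one,\one]}(\CC^\rev\boxtimes\CC)$ and the fact that the forgetful functor $\CC^\rev\boxtimes\CC \to \CC$ is (after the reconstruction) governed by the algebra $[\one,\one]$, one gets $\CC\boxtimes_{\FZ(\CC)}\CC \simeq \BMod_{I|I}(\FZ(\CC))$ by Theorem~\ref{thm:tensor-prod}(1), and then the claim is $\BMod_{I|I}(\FZ(\CC)) \simeq \Fun_\bk(\CC,\CC)$. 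This last equivalence is exactly a relative version of Lemma~\ref{lem:fun:lem1}: apply the Barr--Beck/monadicity argument (as in the proof of Theorem~\ref{thm:reconst} and Lemma~\ref{lem:fun:lem3}) to the forgetful functor $\BMod_{I|I}(\FZ(\CC))\to\FZ(\CC)$ versus the forgetful functor $\Fun_\bk(\CC,\CC) \to \FZ(\CC)$ (the latter being right adjoint to $f\mapsto I_\CC \odot f$, where $I_\CC$ acts on $\Fun_\bk(\CC,\CC)$ through the central functor $\FZ(\CC)\to\Fun_\bk(\CC,\CC)$, i.e.\ an endofunctor is multiplied by $I_\CC$ via the half-braiding). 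Both functors are exact and conservative, and the induced monads agree because $G'\circ F' = - \otimes (I_\CC) = G\circ F$ by Remark~\ref{rem:inhom}-type manipulations; hence the equivalence, and unwinding it sends $a\boxtimes_{\FZ(\CC)}b$ to $I_\CC\odot(a\otimes -\otimes b)$, which — after absorbing $I_\CC$ correctly (here $I_\CC$ is the \emph{right dual} of $[\one,\one]$, chosen exactly so that this normalization gives the unit) — is $a\otimes-\otimes b$. Finally, the whole construction is manifestly natural in the left and right $\CC^\rev\boxtimes\CC$-actions (the left action on the outer $\CC$ factors, the right action on the inner), so the equivalence is one of $\CC^\rev\boxtimes\CC$-$\CC^\rev\boxtimes\CC$-bimodules.

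The main obstacle, I expect, is not any single computation but rather bookkeeping the module structures and, in particular, justifying where indecomposability of $\CC$ is actually needed. Indecomposability guarantees (via Corollary~\ref{cor:matrix} and Theorem~\ref{thm:structure}) that the "reconstruction" of $\CC$ over $\FZ(\CC)$ behaves well — concretely, that $\CC$ is a faithful/invertible-enough module over $\FZ(\CC)$ so that the internal-hom algebra $I_\CC$ exists and the monadicity argument applies; without it the functor $\FZ(\CC)\to\CC$ may fail the surjectivity needed (compare Proposition~\ref{prop:surjective} in the voided remark and Corollary~\ref{cor:matrix}'s use of \cite[Prop.~3.39]{eo}). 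I would make sure to pin down that the comparison of monads $G\circ F \simeq (G''\circ G')\circ(F'\circ F'')$ uses only rigidity of $\FZ(\CC)$ (Remark~\ref{rem:rigid-center}) plus the explicit form of the central action of $I_\CC$ on $\Fun_\bk(\CC,\CC)$, and that the tracking of $a\boxtimes_{\FZ(\CC)}b$ through all intermediate equivalences produces the stated formula on the nose, not merely up to some unspecified twist.
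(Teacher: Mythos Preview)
Your proposal has a genuine gap, and it is precisely at the point where indecomposability of $\CC$ is used. The paper's proof does \emph{not} get the equivalence by an internal Barr--Beck comparison of monads over $\FZ(\CC)$; instead it imports a nontrivial external result, Theorem~3.27 of \cite{eo}, which says that for an indecomposable multi-tensor category the canonical monoidal functor $\CC^\rev\boxtimes\CC \to \Fun_{\FZ(\CC)}(\CC,\CC)$ is an equivalence (the ``double centralizer'' theorem). Once this is in hand, the paper simply chains
\[
\CC \boxtimes_{\FZ(\CC)} \CC \xrightarrow{\delta^R\boxtimes\Id} \CC^{\op|R}\boxtimes_{\FZ(\CC)}\CC \simeq \Fun_{\FZ(\CC)}(\CC,\CC) \simeq \CC^\rev\boxtimes\CC \xrightarrow{\delta^R\boxtimes\Id} \CC^\op\boxtimes\CC \simeq \Fun_\bk(\CC,\CC),
\]
using Proposition~\ref{prop:tensor-prod2} for the second arrow (its hypothesis holds because $\Fun_{\FZ(\CC)}(\CC,\CC)\simeq\CC^\rev\boxtimes\CC$ is rigid), and then tracks $a\boxtimes_{\FZ(\CC)}b$ through via Proposition~\ref{prop:center-cc} and a direct Hom computation to land on $a\otimes-\otimes b$.

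Your attempted route tries to bypass \cite{eo} by a monadicity comparison $\BMod_{I|I}(\FZ(\CC))\simeq\Fun_\bk(\CC,\CC)$, but the ``forgetful functor $\Fun_\bk(\CC,\CC)\to\FZ(\CC)$'' you invoke is not defined: $\Fun_\bk(\CC,\CC)$ is a $\CC^\rev\boxtimes\CC$-bimodule, not naturally monadic over $\FZ(\CC)$ in the way you describe, and $I_\CC$ lives in $\CC^\rev\boxtimes\CC$, not in $\FZ(\CC)$, so the phrase ``$I_\CC$ acts through the central functor'' does not type-check. What you are implicitly assuming --- that $\CC$ as a $\FZ(\CC)$-module is ``invertible enough'' so that its endofunctor category over $\FZ(\CC)$ recovers $\CC^\rev\boxtimes\CC$ --- is exactly the content of the \cite{eo} theorem, and it genuinely fails without indecomposability (cf.\ Proposition~\ref{prop:center-prod}(1)). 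So you should cite that result directly rather than try to reprove it in situ; the rest of your bookkeeping about bimodule structures is then straightforward, as the paper's final sentence notes.
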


\begin{proof}
Theorem 7.12.11 in \cite{egno} says that, for a faithful exact module $\CM$ over a multi-tensor category $\CD$, the canonical functor $\CD \to \Fun_{\Fun_\CD(\CM,\CM)}(\CM,\CM)$ is a monoidal equivalence. When $\CD=\CC^\rev\boxtimes\CC$ and $\CM=\CC$,  since $\FZ(\CC)=\Fun_{\CC^\rev\boxtimes\CC}(\CC,\CC)$, the canonical functor $\CC^\rev\boxtimes\CC \to \Fun_{\FZ(\CC)}(\CC,\CC)$ is a monoidal equivalence. Here, $\CC$ is an exact $\CC^\rev\boxtimes\CC$-module (see Example\,7.5.5 in \cite{egno}), and we require $\CC$ to be indecomposable to ensure that the left $\CC^\rev\boxtimes\CC$-module $\CC$ is faithful \cite[Sec.\,7.12]{egno}.
The composed equivalence (as categories)
$$
\CC \boxtimes_{\FZ(\CC)} \CC
\xrightarrow{\delta^R\boxtimes\Id} \CC^{\op|R} \boxtimes_{\FZ(\CC)} \CC
\simeq \Fun_{\FZ(\CC)}(\CC,\CC)
\simeq \CC^\rev\boxtimes\CC
$$
carries $a\boxtimes_{\FZ(\CC)}b \mapsto a^R\boxtimes_{\FZ(\CC)}b \mapsto [a^R,-]_{\FZ(\CC)}\odot b \mapsto [a,b^L]^R$, where we have used Proposition \ref{prop:tensor-prod2} in the second step and Proposition \ref{prop:center-cc} in the last step.

The following composed equivalence
\begin{align*}
\CC^\rev\boxtimes\CC
&\xrightarrow{\delta^R\boxtimes\Id} \CC^\op \boxtimes \CC
\xrightarrow{\simeq} \Fun_\bk(\CC,\CC) \\
c\boxtimes d \quad &\,\,\,\,\, \mapsto \quad  \,\, c^R\boxtimes d \,\,\, \mapsto \,\,\, \Hom_\CC(-,c^R)^R\odot d
\end{align*}
maps $[a,b^L]^R$ to a functor $f\in \Fun_\bk(\CC,\CC)$. Note that
$\Hom_\CC(\Hom_\CC(x,c^R)^R\odot d, y) \simeq \Hom_\CC(c,x^L)\otimes_k \Hom_\CC(d,y) \simeq \Hom_{\CC^\rev\boxtimes \CC}(c\boxtimes d, x^L\boxtimes y)$, which implies
$$
\Hom_\CC(f(x),y) \simeq \Hom_{\CC^\rev\boxtimes\CC}([a,b^L]^R,x^L\boxtimes y) \simeq \Hom_\CC(a\otimes x\otimes b,y),
$$
i.e. $f\simeq a\otimes-\otimes b$. Note that $a\boxtimes_{\FZ(\CC)}b \mapsto a\otimes - \otimes b$ is a $\CC^\rev\boxtimes\CC$-$\CC^\rev\boxtimes\CC$-bimodule functor.
\end{proof}

\begin{thm}  \label{thm:M-N}
Let $\CC,\CD,\CE$ be multi-tensor categories, and $\CM,\CM'$ be finite $\CC$-$\CD$-bimodules and $\CN,\CN'$ be finite $\CD$-$\CE$-bimodules. Assume $\CD$ is indecomposable. The assignment $f\boxtimes_{\FZ(\CD)}g \mapsto f\boxtimes_\CD g$ determines an equivalence of $\FZ(\CC)$-$\FZ(\CE)$-bimodules
\begin{equation} \label{eq:M-N}
\Fun_{\CC|\CD}(\CM, \CM') \boxtimes_{\FZ(\CD)} \Fun_{\CD|\CE}(\CN, \CN') \simeq \Fun_{\CC |\CE}(\CM\boxtimes_\CD \CN, \CM'\boxtimes_\CD \CN').
\end{equation}
Moreover, when $\CM=\CM'$ and $\CN=\CN'$, \eqref{eq:M-N} is an equivalence of monoidal $\FZ(\CC)$-$\FZ(\CE)$-bimodules.
\end{thm}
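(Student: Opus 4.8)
The plan is to derive Theorem \ref{thm:M-N} from Lemma \ref{lem:fun:lem3} together with the Morita equivalence packaged in Lemma \ref{lem:fun:lem4}. First, specialize Lemma \ref{lem:fun:lem3} by taking there $\CA=\CC$, $\CB=\CE$, and \emph{both} of its inner multi-tensor categories equal to $\CD$, so that its $\CM,\CM'$ become our $\CC$-$\CD$-bimodules and its $\CN,\CN'$ become our $\CD$-$\CE$-bimodules. This produces a canonical equivalence of $\FZ(\CC)$-$\FZ(\CE)$-bimodules
\begin{equation*}
\Fun_\CC(\CM,\CM')\boxtimes_{\CD^\rev\boxtimes\CD}\Fun_{\CE^\rev}({}^{LL}_{~\CD}\CN,\CN')\ \simeq\ \Fun_{\CC|\CE}(\CM\boxtimes_\CD\CN,\CM'\boxtimes_\CD\CN'),\qquad f\boxtimes_{\CD^\rev\boxtimes\CD}g\mapsto I_\CD\odot\bigl(f(-)\boxtimes_\CD g(-)\bigr).
\end{equation*}
It therefore remains to rewrite the left-hand side, replacing $\boxtimes_{\CD^\rev\boxtimes\CD}$ by $\boxtimes_{\FZ(\CD)}$ and the two one-sided functor categories by the two-sided ones $\Fun_{\CC|\CD}(\CM,\CM')$ and $\Fun_{\CD|\CE}(\CN,\CN')$.

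For this, recall that since $\CD$ is indecomposable one has $\CD^\rev\boxtimes\CD\simeq\Fun_{\FZ(\CD)}(\CD,\CD)$ (Theorem 3.27 of \cite{eo}, as used in the proof of Lemma \ref{lem:fun:lem4}); equivalently, $\CD$ is an invertible bimodule between the multi-tensor category $\CD^\rev\boxtimes\CD$ and the braided multi-tensor category $\FZ(\CD)$, the two halves of invertibility being $\CD\boxtimes_{\FZ(\CD)}\CD\simeq\Fun_\bk(\CD,\CD)$ (the regular $\CD^\rev\boxtimes\CD$-bimodule, by Lemma \ref{lem:fun:lem4}) and $\CD^{\op|L}\boxtimes_{\CD^\rev\boxtimes\CD}\CD\simeq\Fun_{\CD^\rev\boxtimes\CD}(\CD,\CD)\simeq\FZ(\CD)$ (by Corollary \ref{cor:tensor-prod}(1)). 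Consequently, relative tensor product with the invertible bimodule $\CD$ identifies module categories over $\CD^\rev\boxtimes\CD$ with module categories over $\FZ(\CD)$, and under this identification a $\CD$-$\CD$-bimodule $\CX$ corresponds to its ``$\CD$-central part'' $\Fun_{\CD^\rev\boxtimes\CD}(\CD,\CX)$. Applying this to $\CX=\Fun_\CC(\CM,\CM')$ — which is a $\CD$-$\CD$-bimodule through the right $\CD$-actions on $\CM$ and $\CM'$ — and unwinding definitions, its $\CD$-central part is precisely the category of $\CC$-$\CD$-bimodule functors: $\Fun_{\CD^\rev\boxtimes\CD}(\CD,\Fun_\CC(\CM,\CM'))\simeq\Fun_{\CC|\CD}(\CM,\CM')$, compatibly with the residual $\FZ(\CC)$-action; and likewise the $\CD$-central part of $\Fun_{\CE^\rev}({}^{LL}_{~\CD}\CN,\CN')$ is $\Fun_{\CD|\CE}(\CN,\CN')$, compatibly with $\FZ(\CE)$.

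Putting these together, insert the regular $\FZ(\CD)$-bimodule $\FZ(\CD)\simeq\CD^{\op|L}\boxtimes_{\CD^\rev\boxtimes\CD}\CD$ between the two factors and regroup by associativity of the relative tensor product, and then apply the Morita identifications and, finally, the specialized Lemma \ref{lem:fun:lem3}:
\begin{align*}
\Fun_{\CC|\CD}(\CM,\CM')\boxtimes_{\FZ(\CD)}\Fun_{\CD|\CE}(\CN,\CN') &\simeq \bigl(\Fun_{\CC|\CD}(\CM,\CM')\boxtimes_{\FZ(\CD)}\CD^{\op|L}\bigr)\boxtimes_{\CD^\rev\boxtimes\CD}\bigl(\CD\boxtimes_{\FZ(\CD)}\Fun_{\CD|\CE}(\CN,\CN')\bigr)\\
&\simeq \Fun_\CC(\CM,\CM')\boxtimes_{\CD^\rev\boxtimes\CD}\Fun_{\CE^\rev}({}^{LL}_{~\CD}\CN,\CN')\\
&\simeq \Fun_{\CC|\CE}(\CM\boxtimes_\CD\CN,\CM'\boxtimes_\CD\CN').
\end{align*}
Tracking a pure tensor $f\boxtimes_{\FZ(\CD)}g$ along this chain — on which the unit $\one\boxtimes_{\FZ(\CD)}\one$ is sent to the identity functor via Lemma \ref{lem:fun:lem4}, which cancels the auxiliary factor $I_\CD$ appearing in Lemma \ref{lem:fun:lem3} — shows that the composite equivalence is $f\boxtimes_{\FZ(\CD)}g\mapsto f\boxtimes_\CD g$, which is manifestly $\FZ(\CC)$-$\FZ(\CE)$-bilinear. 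When $\CM=\CM'$ and $\CN=\CN'$, both sides carry their canonical monoidal structures (composition of functors on the right; the monoidal structure of Section \ref{sec:rigidity} on the relative tensor product of monoidal modules over the braided category $\FZ(\CD)$ on the left), and the equality on pure tensors $(f_1\boxtimes_{\FZ(\CD)}g_1)\otimes(f_2\boxtimes_{\FZ(\CD)}g_2)=(f_1\circ f_2)\boxtimes_{\FZ(\CD)}(g_1\circ g_2)\mapsto(f_1\circ f_2)\boxtimes_\CD(g_1\circ g_2)=(f_1\boxtimes_\CD g_1)\circ(f_2\boxtimes_\CD g_2)$ shows the equivalence respects the monoidal structures; hence it is an equivalence of monoidal $\FZ(\CC)$-$\FZ(\CE)$-bimodules.

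The step I expect to be the main obstacle is the second one: tracking all the $(-)^\rev$'s and the $LL$-twist (including the implicit twist on $\CD$ carried along in Lemma \ref{lem:fun:lem3}) so that the three factorizations displayed above are genuinely $\FZ(\CC)$-, $\FZ(\CD)$- and $\FZ(\CE)$-linear, and proving the ``$\CD$-central part'' identification $\Fun_{\CD^\rev\boxtimes\CD}(\CD,\Fun_\CC(\CM,\CM'))\simeq\Fun_{\CC|\CD}(\CM,\CM')$ (and its $\CN$-analogue) respecting those extra symmetries. This is exactly where indecomposability of $\CD$ enters, via Lemma \ref{lem:fun:lem4}, and where the bookkeeping is heaviest.
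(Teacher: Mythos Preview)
Your proposal is correct and takes essentially the same approach as the paper: both reduce to Lemma \ref{lem:fun:lem3} via the Morita equivalence between $\CD^\rev\boxtimes\CD$ and $\FZ(\CD)$ supplied by Lemma \ref{lem:fun:lem4}, together with Lemma \ref{lem:fun:lem1} (and its variant in Remark \ref{rem:fun:rem1}) to pass between one-sided and two-sided functor categories. The paper's proof writes out the chain explicitly --- rewriting $\Fun_{\CC|\CD}(\CM,\CM')\simeq\Fun_\CC(\CM^{RR}_\CD,\CM')\boxtimes_{\CD^\rev\boxtimes\CD}\CD$ and the analogous formula for $\CN$, contracting $\CD\boxtimes_{\FZ(\CD)}\CD$ to $\Fun_\bk(\CD,\CD)$, then absorbing this via \eqref{eq:AMMCDNNB-2} --- and tracks the $I_\CD$, $I_{\CD^\rev}$ factors and $LL/RR$ twists on objects, which is exactly the bookkeeping you flag as the main obstacle.
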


\begin{proof}
We have the following composed equivalence:
\begin{align*}
\Fun&_{\CC|\CD}(\CM, \CM') \boxtimes_{\FZ(\CD)} \Fun_{\CD|\CE}(\CN, \CN') \nn
& \simeq
\Fun_\CC(\CM^{RR}_\CD,\CM') \boxtimes_{\CD^\rev\boxtimes\CD} \CD \boxtimes_{\FZ(\CD)}
\CD \boxtimes_{\CD^\rev\boxtimes\CD} \Fun_{\CE^\rev}({}^{LL}_{~\CD}\CN,\CN')  \nn
& \simeq \Fun_\CC(\CM^{RR}_\CD,\CM') \boxtimes_{\CD^\rev\boxtimes\CD} \Fun_\bk(\CD,\CD) \boxtimes_{\CD^\rev\boxtimes\CD} \Fun_{\CE^\rev}({}^{LL}_{~\CD}\CN,\CN') \nn
&\simeq \Fun_\CC(\CM\boxtimes_\CD \CD,\CM'\boxtimes_\CD \CD) \boxtimes_{\CD^\rev\boxtimes\CD} \Fun_{\CE^\rev}({}^{LL}_{~\CD}\CN,\CN') \nn
&\simeq \Fun_\CC(\CM,\CM') \boxtimes_{\CD^\rev\boxtimes\CD} \Fun_{\CE^\rev}({}^{LL}_{~\CD}\CN,\CN') \nn
& \simeq \Fun_{\CC|\CE}(\CM \boxtimes_\CD \CN, \CM' \boxtimes_\CD \CN'),
\end{align*}
where we have used (\ref{eq:NrrNC}) and (\ref{eq:CMLLM2}) in the first step; Lemma \ref{lem:fun:lem4} in the second step;  (\ref{eq:AMMCDNNB-2}) in the third step; (\ref{eq:AMMCDNNB}) in the last step. For $f'\in\Fun_\CC(\CM,\CM')$, $g'\in\Fun_{\CE^\rev}(\CN,\CN')$, this composed equivalence carries
\begin{align*}
(I_{\CD^\rev} &\odot f') \boxtimes_{\FZ(\CD)} (I_\CD \odot g')  \\
&\mapsto
(f' \boxtimes_{\CD^\rev\boxtimes\CD} \one_\CD) \boxtimes_{\FZ(\CD)} (\one_\CD\boxtimes_{\CD^\rev\boxtimes \CD} g')  \\
&\mapsto
f' \boxtimes_{\CD^\rev\boxtimes\CD} \Id_\CD
\boxtimes_{\CD^\rev\boxtimes \CD} \, g' \\
&\mapsto  I_{\CD^\rev} \odot (f'(-)\boxtimes_\CD \Id_\CD(-)) \boxtimes_{\CD^\rev\boxtimes \CD} g' \\
&\mapsto  (I_{\CD^\rev} \odot f')  \boxtimes_{\CD^\rev\boxtimes \CD} g' \\
&\mapsto  I_\CD \odot ((I_{\CD^\rev} \odot f')(-) \boxtimes_\CD g'(-)) \\
& =  (I_{\CD^\rev} \odot f') \boxtimes_\CD (I_\CD \odot g')\, .
\end{align*}
Therefore, it must carry $f\boxtimes_{\FZ(\CD)}g$ to $f\boxtimes_\CD g$. It is clear that it is also a $\FZ(\CC)$-$\FZ(\CE)$-bimodule equivalence.

When $\CM=\CM'$ and $\CN=\CN'$, the formula $f\boxtimes_{\FZ(\CD)}g \mapsto f\boxtimes_\CD g$ clearly defines a monoidal equivalence. Moreover, it is routine to check that the diagram (\ref{diag:m-m-map}) is commutative in this case. Therefore, the isomorphism defined in (\ref{eq:M-N}) is a monoidal $\FZ(\CC)$-$\FZ(\CE)$-bimodule equivalence.
\end{proof}

We introduce two categories $\mtc$ and $\bmtc$ as follows:
\bnu
\item The category $\mtc$ of indecomposable multi-tensor categories over $k$ with morphisms given by the equivalence classes of finite bimodules.
\item The category $\bmtc$ of braided tensor categories over $k$ with morphisms given by the equivalence classes of monoidal bimodules.
\enu
In view of Example \ref{exam:rigid-boxtensor}, both categories are symmetric monoidal categories with respect to $\boxtimes$.

\begin{thm} \label{thm:functorial}
The assignment $\CC \mapsto \FZ(\CC)$, ${}_\CC\CM_\CD \mapsto \Fun_{\CC|\CD}(\CM,\CM)$ defines a symmetric monoidal functor $\FZ: \mtc \to \bmtc$.
\end{thm}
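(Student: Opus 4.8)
**The plan is to verify the functor axioms using the preparatory results, with the composition law being the substantial point.**

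First I would check that $\FZ$ is well-defined on objects and morphisms. On objects: if $\CC$ is an indecomposable multi-tensor category over the algebraically closed field $k$, then $\FZ(\CC)$ is a braided monoidal category, it is rigid by Remark \ref{rem:rigid-center}, and by Proposition \ref{prop:center-prod}(1) together with the hypothesis that $\CC$ is indecomposable we see $\FZ(\CC)$ has a simple tensor unit, hence is a braided tensor category; it lies in $\bmtc$. On morphisms: given a finite $\CC$-$\CD$-bimodule $\CM$, the category $\FZ(\CM):=\Fun_{\CC|\CD}(\CM,\CM)$ is a finite monoidal category carrying the braided monoidal functor $\FZ(\CC)^\flp\boxtimes\FZ(\CD)\to\FZ(\FZ(\CM))$ coming from the evident central actions of $\FZ(\CC)$ and $\FZ(\CD)$ on $\Fun_{\CC|\CD}(\CM,\CM)$ (as recorded in the Example following Definition \ref{def:monoidal-module-map}); one must also note that $\FZ(\CM)$ is rigid. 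Next I would check well-definedness on equivalence classes: if $\CM\simeq\CM'$ as $\CC$-$\CD$-bimodules then $\Fun_{\CC|\CD}(\CM,\CM)\simeq\Fun_{\CC|\CD}(\CM',\CM')$ as monoidal $\FZ(\CC)$-$\FZ(\CD)$-bimodules, which is immediate.

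The heart of the argument is functoriality, i.e. compatibility with composition. In $\mtc$, the composite of ${}_\CC\CM_\CD$ and ${}_\CD\CN_\CE$ is $\CM\boxtimes_\CD\CN$. So I must produce a coherent isomorphism
\begin{equation*}
\FZ(\CM)\boxtimes_{\FZ(\CD)}\FZ(\CN)\;\simeq\;\FZ(\CM\boxtimes_\CD\CN)
\end{equation*}
of monoidal $\FZ(\CC)$-$\FZ(\CE)$-bimodules — but this is exactly the content of Theorem \ref{thm:M-N} in the case $\CM=\CM'$, $\CN=\CN'$ (using that $\CD$, being a morphism object in $\mtc$, is indecomposable). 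Likewise, I must check that $\FZ$ sends the identity morphism ${}_\CC\CC_\CC$ to the identity morphism of $\FZ(\CC)$ in $\bmtc$; that is, $\Fun_{\CC|\CC}(\CC,\CC)\simeq\FZ(\CC)$ as a monoidal $\FZ(\CC)$-$\FZ(\CC)$-bimodule, which is the standard identification recalled in Section 2.4 (and which, as an identity $1$-morphism in the target, means the tautological closed bimodule). The associativity and unitality coherences then reduce to the naturality statement in Theorem \ref{thm:M-N}: one checks that the equivalences of \eqref{eq:M-N} for a triple composite $\CM\boxtimes_\CD\CN\boxtimes_\CE\CP$ fit into a commuting pentagon, and this follows because on objects both composites send $f\boxtimes g\boxtimes h$ to $f\boxtimes_\CD g\boxtimes_\CE h$, and a $1$-morphism in $\bmtc$ is determined up to contractible choice by an equivalence class of monoidal bimodules.

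Finally I would address the symmetric monoidal structure. The monoidal structures on $\mtc$ and $\bmtc$ are both $\boxtimes$ (over $\bk$), with the tensor unit $\bk$ on each side, and these are symmetric monoidal since $k$ is algebraically closed (Example \ref{exam:rigid-boxtensor}). The lax structure map is the braided monoidal equivalence $\FZ(\CC)\boxtimes\FZ(\CC')\simeq\FZ(\CC\boxtimes\CC')$ of Proposition \ref{prop:center-prod}(2), with the unit constraint $\bk\simeq\FZ(\bk)$; on morphisms one needs $\Fun_{\CC|\CD}(\CM,\CM)\boxtimes\Fun_{\CC'|\CD'}(\CM',\CM')\simeq\Fun_{\CC\boxtimes\CC'|\CD\boxtimes\CD'}(\CM\boxtimes\CM',\CM'\boxtimes\CM')$, which is Proposition \ref{prop:prod-fun}(2), and one checks this is compatible with the central $\FZ$-actions and with the symmetry. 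The main obstacle is bookkeeping: verifying that the long chain of equivalences assembled in Theorem \ref{thm:M-N} really does respect all the coherence data (pentagon for composition, and the interchange between composition and $\boxtimes$) rather than merely matching objects; but since every $1$-morphism in $\bmtc$ is an equivalence class, once the underlying bimodules are identified compatibly with their central functors, the coherences are automatic, so the proof is essentially an assembly of Theorem \ref{thm:M-N}, Proposition \ref{prop:center-prod} and Proposition \ref{prop:prod-fun}.
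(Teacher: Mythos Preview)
Your proposal is correct and follows essentially the same approach as the paper: the paper's proof simply cites Remark~\ref{rem:rigid-center} and Theorem~\ref{thm:M-N} for well-definedness and functoriality, and Proposition~\ref{prop:prod-fun}(2) together with Proposition~\ref{prop:center-prod} for the symmetric monoidal structure, which is exactly the assembly you describe. One small imprecision: invoking Proposition~\ref{prop:center-prod}(1) alone does not show that $\FZ(\CC)$ has simple tensor unit for indecomposable $\CC$; the clean argument is via Theorem~\ref{thm:structure}(7), which reduces to $\FZ(\CC_{ii})$ whose unit is simple because the forgetful functor to $\CC_{ii}$ is faithful.
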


\begin{proof}
By Remark \ref{rem:rigid-center} and Theorem \ref{thm:M-N}, the functor $\FZ$ is well-defined. By Proposition \ref{prop:prod-fun}(2) and Proposition \ref{prop:center-prod}, $\FZ$ is a symmetric monoidal functor.
\end{proof}

\begin{rem}
The domain of $\FZ$ can not be generalized to decomposable multi-tensor categories. For example, let $\CC=\bk\oplus\bk$, regarded as a $\bk$-$\CC$-bimodule and a $\CC$-$\bk$-bimodule. Note that $\Fun_{\bk|\CC}(\CC,\CC) \boxtimes_{\FZ(\CC)} \Fun_{\CC|\bk}(\CC,\CC) \simeq \CC$ and $\Fun_{\bk|\bk}(\CC\boxtimes_\CC\CC,\CC\boxtimes_\CC\CC) \simeq \CC\oplus\CC$ do not match.
\end{rem}


\subsection{An alternative approach to a monoidal functor}  \label{sec:monoidal-functor}

In this subsection, we provide an equivalent definition of a $\bk$-linear monoidal functor. It is inspired by the notion of a morphism between two topological orders introduced in \cite{kong-wen-zheng}.

\medskip
Let $f:\CC\to\CD$ be a $\bk$-linear monoidal functor between two multi-tensor categories $\CC$ and $\CD$. It endows the category $\CD$ with a canonical $\CC$-$\CC$-bimodule structure, denoted by ${}_f\CD_f$. The category $\Fun_{\CC|\CC}(\CC, \CD)$ has a natural structure of a monoidal category \cite{gnn} which can be described as follows.
An object is a pair:
$$
(d\in\CD, \beta_{d,-}=\{d\otimes f(c) \xrightarrow{\beta_{d,c}} f(c) \otimes d\}_{c\in \CC})
$$
where $\beta_{d,-}$ is a half-braiding (i.e. a natural isomorphism in the variable $c\in \CC$ and satisfying $\beta_{d,c'} \circ \beta_{d,c} = \beta_{d,c\otimes c'}$). A morphism $(d,\beta_{d,-})\to(d',\beta_{d',-})$ is defined by a morphism $\psi: d \to d'$ preserving half-braidings. The monoidal structure is given by the formula $(d,\beta_{d,-})\otimes(d',\beta_{d',-})=(d\otimes d',\beta_{d,-}\circ\beta_{d',-})$.

It is not hard to see that an object in $\Fun_{\CC|\CD}({}_f\CD, {}_f\CD)$ is also such a pair. Therefore, we simply identify these two monoidal categories:
$$
\Fun_{\CC|\CC}(\CC, {}_f\CD_f)=\Fun_{\CC|\CD}({}_f\CD, {}_f\CD).
$$

\begin{lem}   \label{lem:M-N}
Let $f:\CC\to\CD$ be a $\bk$-linear monoidal functor from an indecomposable multi-tensor category $\CC$ to a finite monoidal category $\CD$. Then the evaluation functor $\CC\times\Fun_{\CC|\CC}(\CC,\CD) \to \CD$, $(c,j)\mapsto j(c)$ induces an equivalence of monoidal right $\FZ(\CD)$-modules
\begin{equation}   \label{eq:C-fun-CD=D}
\CC\boxtimes_{\FZ(\CC)}\Fun_{\CC|\CC}(\CC,{}_f\CD_f) \simeq \CD.
\end{equation}
\end{lem}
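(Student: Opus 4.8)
The plan is to show that the evaluation functor descends to the asserted functor and then that it is an equivalence, using the module-category calculus of Section \ref{sec:fun-center}.

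First I would set up the descent. Regard ${}_f\CD$ as a finite $\CC$-$\CD$-bimodule and use the identification $\Fun_{\CC|\CC}(\CC,{}_f\CD_f)=\Fun_{\CC|\CD}({}_f\CD,{}_f\CD)$ from the text. The claim to check is that $ev\colon\CC\times\Fun_{\CC|\CC}(\CC,{}_f\CD_f)\to\CD$, $(c,j)\mapsto j(c)$, is a balanced $\FZ(\CC)$-module functor: $\FZ(\CC)$ acts on $\CC$ from the right by $c\mapsto c\otimes z_0$ for $z=(z_0,\gamma)\in\FZ(\CC)$, and on $\Fun_{\CC|\CC}(\CC,{}_f\CD_f)$ from the left by post-composition with the central functor $f$, i.e.\ $j\mapsto f(z_0)\otimes j(-)$; the balancing datum $ev(c\otimes z_0,j)\cong ev(c,z\cdot j)$ is the isomorphism $j(c\otimes z_0)=j(c)\otimes f(z_0)\xrightarrow{\ \sim\ }f(z_0)\otimes j(c)$ furnished by $\gamma$ together with the bimodule-functor structure of $j$, and the balanced-functor coherences reduce to the half-braiding axioms for $\gamma$. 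One also verifies that $ev$ is compatible with the monoidal structures and the right $\FZ(\CD)$-actions. Hence $ev$ induces a monoidal, right-$\FZ(\CD)$-linear functor $\overline{ev}\colon\CC\boxtimes_{\FZ(\CC)}\Fun_{\CC|\CC}(\CC,{}_f\CD_f)\to\CD$, and it remains to see that $\overline{ev}$ is an equivalence of categories.

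For this I would express both sides through module categories over $\CC^{e}:=\CC\boxtimes\CC^{\rev}$, which is an indecomposable multi-tensor category since $k$ is algebraically closed. A $\CC$-$\CC$-bimodule functor is exactly a $\CC^{e}$-module functor, so $\Fun_{\CC|\CC}(\CC,{}_f\CD_f)=\Fun_{\CC^{e}}(\CC,{}_f\CD_f)$, where $\CC\simeq\RMod_{[\one,\one]}(\CC^{e})$ (Lemmas \ref{lem:cc-inhom}, \ref{lem:cc-alg}), ${}_f\CD_f\simeq\RMod_{B}(\CC^{e})$ for some $B\in\Alg(\CC^{e})$ (Proposition \ref{prop:mod-alg}), and $\FZ(\CC)=\Fun_{\CC^{e}}(\CC,\CC)$. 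Using the explicit realisations of relative tensor products in Corollary \ref{cor:tensor-prod} to trade $\Fun_{\CC^{e}}(\CC,-)$ for a tensor product against $\CC$ over $\CC^{e}$, the associativity of $\boxtimes$, and Lemma \ref{lem:fun:lem4} (which computes $\CC\boxtimes_{\FZ(\CC)}\CC\simeq\Fun_\bk(\CC,\CC)$ and is where the indecomposability of $\CC$ enters), one collapses $\CC\boxtimes_{\FZ(\CC)}\Fun_{\CC^{e}}(\CC,{}_f\CD_f)$ down to $\RMod_{B}(\CC^{e})={}_f\CD_f=\CD$. Finally I would track the object $c\boxtimes_{\FZ(\CC)}j$ along this chain of equivalences and confirm that its image in $\CD$ is $j(c)$, so that the equivalence so produced is $\overline{ev}$; its monoidality and right-$\FZ(\CD)$-linearity are then inherited from the first step.

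The hard part will be the bookkeeping in the last paragraph: one must carefully reconcile the several left/right $\FZ(\CC)$- and $\CC^{e}$-module structures involved — in particular the left $\FZ(\CC)$-action on $\Fun_{\CC^{e}}(\CC,{}_f\CD_f)$ coming from post-composition with $f$ versus the one coming from its description as a relative tensor product against $\CC$ — justify that the chain of tensor-product identifications is well-defined and associative, and check that the resulting equivalence is monoidal, not merely $k$-linear. Note that one cannot shortcut this by invoking Theorem \ref{thm:M-N} with $\bk,\CC,\CD$ in place of its $\CC,\CD,\CE$, because $\CD$ is not assumed rigid; the argument above sidesteps that hypothesis by unfolding only the rigid category $\CC$ and treating $\CD$ as a passenger in the outermost tensor product.
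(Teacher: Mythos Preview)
Your proposal could likely be made to work, but your last paragraph contains a mistaken premise, and this leads you to reinvent machinery that is already available. You assert that one cannot invoke Theorem~\ref{thm:M-N} because $\CD$ is not rigid; the paper's proof \emph{does} invoke Theorem~\ref{thm:M-N}, but with $\bk,\CC,\CC$ (not $\bk,\CC,\CD$) in place of its $\CC,\CD,\CE$. The trick you missed is that $\CD$ never needs to play the role of one of the three multi-tensor categories: it enters only as the finite $\CC$-$\CC$-bimodule ${}_f\CD_f$. Taking $\CM=\CM'=\CC$ as $\bk$-$\CC$-bimodules and $\CN=\CC$, $\CN'={}_f\CD_f$ as $\CC$-$\CC$-bimodules, Theorem~\ref{thm:M-N} gives
\[
\Fun_{\bk|\CC}(\CC,\CC)\boxtimes_{\FZ(\CC)}\Fun_{\CC|\CC}(\CC,{}_f\CD_f)\ \simeq\ \Fun_{\bk|\CC}(\CC\boxtimes_\CC\CC,\ \CC\boxtimes_\CC{}_f\CD_f)\ \simeq\ \Fun_{\bk|\CC}(\CC,\CD_f),
\]
and one finishes with the obvious monoidal equivalences $\Fun_{\bk|\CC}(\CC,\CC)\simeq\CC$ and $\Fun_{\bk|\CC}(\CC,\CD_f)\simeq\CD$ (both via $g\mapsto g(\one_\CC)$). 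No rigidity of $\CD$ is used anywhere. The monoidal and right-$\FZ(\CD)$-module compatibilities are then read off from the explicit formula $c\boxtimes_{\FZ(\CC)} j\mapsto j(c)$, just as you proposed in your first step.

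Your alternative route---unfolding over $\CC^e=\CC\boxtimes\CC^\rev$ via Corollary~\ref{cor:tensor-prod} and Lemma~\ref{lem:fun:lem4}---is in effect re-deriving the proof of Theorem~\ref{thm:M-N} in this one special case: the steps you sketch are precisely what Lemmas~\ref{lem:fun:lem1}--\ref{lem:fun:lem3} package. So nothing is gained, and you take on exactly the bookkeeping hazards you flag (reconciling the $\FZ(\CC)$-action by post-composition with the one coming from the tensor-product realisation, and the $\op|L$ twist when passing from $\Fun_{\CC^e}(\CC,-)$ to $\CC^{\op|L}\boxtimes_{\CC^e}-$), which the paper has already handled once and for all in Section~\ref{sec:fun-center}.
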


\begin{proof}
This follows from Theorem \ref{thm:M-N} and the obvious monoidal equivalences $\Fun_{\bk|\CC}(\CC,\CC)\simeq\CC$ and $\Fun_{\bk|\CC}(\CC,\CD_f) \simeq \CD$.
\end{proof}


Let $\CC$ and $\CD$ be finite monoidal categories. We define two groupoids.
One is the groupoid $\Fun^\otimes(\CC, \CD)$ of $\bk$-linear monoidal functors from $\CC$ to $\CD$ and isomorphisms between them. The other one $\Fun^\ph(\CC, \CD)$ is defined below.
\begin{defn}
The groupoid $\Fun^\ph(\CC, \CD)$ consists of
\begin{itemize}
\item objects: an object $f\in \Fun^\ph(\CC, \CD)$ is a pair $f=(f^{(0)}, f^{(1)})$ where $f^{(0)}$ is a monoidal $\FZ(\CC)$-$\FZ(\CD)$-bimodule, and $f^{(1)}: \CC \boxtimes_{\FZ(\CC)} f^{(0)} \xrightarrow{\simeq} \CD$ is an equivalence of monoidal right $\FZ(\CD)$-modules (recall Definition \ref{def:monoidal-module-map}).

\item isomorphisms: an isomorphism $\phi: f\to g$ in $\Fun^\ph(\CC, \CD)$ is an equivalence class of pairs $\phi=(\phi^{(0)}, \phi^{(1)})$, where $\phi^{(0)}: f^{(0)} \to g^{(0)}$ is an equivalence of monoidal $\FZ(\CC)$-$\FZ(\CD)$-bimodules and
$$
\phi^{(1)}: g^{(1)} \circ (\Id_{\CC} \boxtimes_{\FZ(\CC)} \phi^{(0)}) \xrightarrow{\simeq} f^{(1)}
$$
is a monoidal natural isomorphism such that the following diagram:
\be  \label{diag:phi-(1)}
\raisebox{1em}{\xymatrix@R=0.2em{
& \CC \boxtimes_{\FZ(\CC)}  g^{(0)} \ar[ddr]^{g^{(1)}} &  \\
& 
&  \\
\CC \boxtimes_{\FZ(\CC)} f^{(0)} \ar[uur]^{\hspace{-2cm} \Id_{\CC} \boxtimes_{\FZ(\CC)} \phi^{(0)} } \ar[rr]^{f^{(1)}}  & &   \CD
}}
\ee
is commutative up to $\phi^{(1)}$.
Two pairs $(\phi_i^{(0)},\phi_i^{(1)})$ for $i=1,2$ are isomorphic if there is an isomorphism $\psi: \phi_1^{(0)} \to \phi_2^{(0)}$ such that $\phi_1^{(1)}=\phi_2^{(1)}\circ\psi$.
\end{itemize}
\end{defn}

\begin{thm}  \label{thm:fun=ph-mor}
Let $\CC$ be an indecomposable multi-tensor category and $\CD$ a finite monoidal category. There is an equivalence between two groupoids $\Fun^\otimes(\CC, \CD)$ and $\Fun^\ph(\CC, \CD)$ defined by, for a $\bk$-linear monoidal functor $f: \CC \to \CD$,
$$
f \mapsto \tilde{f}= \Big(\, \tilde{f}^{(0)}=\Fun_{\CC|\CC}(\CC, {}_f\CD_f), \quad \CC\boxtimes_{\FZ(\CC)} \Fun_{\CC|\CC}(\CC, {}_f\CD_f) \xrightarrow{\tilde{f}^{(1)}} \CD \Big),
$$
where ${}_f\CD_f$ is the $\CC$-$\CC$-bimodule structure on $\CD$ induced from the $\bk$-linear monoidal functor $f: \CC \to \CD$ and $\tilde{f}^{(1)}$ is given by the monoidal equivalence (\ref{eq:C-fun-CD=D}).
\end{thm}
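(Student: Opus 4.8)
The plan is to exhibit a quasi-inverse $\Psi:\Fun^\ph(\CC,\CD)\to\Fun^\otimes(\CC,\CD)$ of the functor $f\mapsto\tilde f$ and check that the two composites are canonically isomorphic to the respective identity functors; a functor of groupoids admitting such a quasi-inverse is an equivalence. To construct $\Psi$, recall that $\CC$ is a monoidal right $\FZ(\CC)$-module via $\Id_{\FZ(\CC)}$, that $\CC\boxtimes_{\FZ(\CC)}f^{(0)}$ is monoidal (Section~\ref{sec:rigidity}), and that the canonical functor $\CC\times f^{(0)}\to\CC\boxtimes_{\FZ(\CC)}f^{(0)}$ is monoidal; composing the resulting $k$-linear monoidal functor $c\mapsto c\boxtimes_{\FZ(\CC)}\one_{f^{(0)}}$ with the monoidal equivalence $f^{(1)}$ yields a $k$-linear monoidal functor $\Psi(f^{(0)},f^{(1)}):\CC\to\CD$, $c\mapsto f^{(1)}(c\boxtimes_{\FZ(\CC)}\one_{f^{(0)}})$. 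An isomorphism $(\phi^{(0)},\phi^{(1)}):f\to g$ is sent to the monoidal natural isomorphism obtained by evaluating $\phi^{(1)}$ at the objects $c\boxtimes_{\FZ(\CC)}\one_{f^{(0)}}$ and composing with the unit constraint $\phi^{(0)}(\one_{f^{(0)}})\simeq\one_{g^{(0)}}$; since a monoidal natural isomorphism is determined at the unit object, this is independent of the chosen representative, and $\Psi$ is manifestly functorial.

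The composite $\Psi\circ\widetilde{(-)}$ is isomorphic to the identity: the tensor unit of $\tilde f^{(0)}=\Fun_{\CC|\CC}(\CC,{}_f\CD_f)$ is the bimodule functor $u:\CC\to{}_f\CD_f$ with $u(\one_\CC)=\one_\CD$, and since the left $\CC$-action on ${}_f\CD_f$ is via $f$ one has $u(c)\simeq f(c)\otimes u(\one_\CC)=f(c)$; as $\tilde f^{(1)}$ is the evaluation equivalence of Lemma~\ref{lem:M-N}, we get $\Psi(\tilde f)(c)=\tilde f^{(1)}(c\boxtimes_{\FZ(\CC)}u)=u(c)\simeq f(c)$, monoidally and naturally in $c$.

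For the other composite, fix $f=(f^{(0)},f^{(1)})$ and put $f_0:=\Psi(f)$. Transporting the $\CC$-$\CC$-bimodule ${}_{f_0}\CD_{f_0}$ along the equivalence $f^{(1)}:\CC\boxtimes_{\FZ(\CC)}f^{(0)}\xrightarrow{\simeq}\CD$, one checks — using that the tensor product of $\CC\boxtimes_{\FZ(\CC)}f^{(0)}$ is computed factorwise up to the half-braidings supplied by $\FZ(\CC)$, so that $c\boxtimes_{\FZ(\CC)}\one_{f^{(0)}}$ tensors on either side simply by merging into the $\CC$-tensorand — that ${}_{f_0}\CD_{f_0}$ is carried to $\CC\boxtimes_{\FZ(\CC)}f^{(0)}$ equipped with the $\CC$-$\CC$-bimodule structure coming from the regular $\CC\boxtimes\CC^\rev$-module $\CC$. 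Applying $\Fun_{\CC|\CC}(\CC,-)=\Fun_{\CC\boxtimes\CC^\rev}(\CC,-)$, commuting it past $\boxtimes_{\FZ(\CC)}$ on the passive factor $f^{(0)}$ by Lemma~\ref{lem:fun:lem2} (applied with $\CC\boxtimes\CC^\rev$ in place of $\CC$), and using $\Fun_{\CC\boxtimes\CC^\rev}(\CC,\CC)\simeq\FZ(\CC)$, one obtains a chain of equivalences
$$
\widetilde{\Psi(f)}^{(0)}=\Fun_{\CC|\CC}(\CC,{}_{f_0}\CD_{f_0})\ \simeq\ \Fun_{\CC\boxtimes\CC^\rev}(\CC,\CC)\boxtimes_{\FZ(\CC)}f^{(0)}\ \simeq\ \FZ(\CC)\boxtimes_{\FZ(\CC)}f^{(0)}\ \simeq\ f^{(0)},
$$
which one promotes to an equivalence $\phi^{(0)}$ of monoidal $\FZ(\CC)$-$\FZ(\CD)$-bimodules. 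Tracing the evaluation equivalence $\widetilde{\Psi(f)}^{(1)}$ of Lemma~\ref{lem:M-N} through this chain, one checks that it matches $f^{(1)}$ up to a monoidal right $\FZ(\CD)$-module isomorphism $\phi^{(1)}$, producing $(\phi^{(0)},\phi^{(1)}):\widetilde{\Psi(f)}\to f$ in $\Fun^\ph(\CC,\CD)$; naturality in $f$ is routine.

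The hard part is this ``cancellation'' step. The commutation of $\Fun_{\CC|\CC}(\CC,-)$ with $\boxtimes_{\FZ(\CC)}$ is Lemma~\ref{lem:fun:lem2} at the level of underlying categories, but promoting the whole chain to equivalences of \emph{monoidal} $\FZ(\CC)$-$\FZ(\CD)$-bimodules, and identifying $\widetilde{\Psi(f)}^{(1)}$ with $f^{(1)}$, requires careful simultaneous bookkeeping of the braided monoidal structure of $\FZ(\CC)$, the monoidal structure and right $\FZ(\CD)$-action on $f^{(0)}$, and the coherences of all the module actions, together with the transport of the bimodule structure through $f^{(1)}$. Everything else — well-definedness and functoriality of $\Psi$, and the isomorphism $\Psi(\tilde f)\cong f$ — is routine. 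Indecomposability of $\CC$ enters through Lemma~\ref{lem:M-N}, hence Theorem~\ref{thm:M-N}.
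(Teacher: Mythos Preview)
Your proposal is correct and follows essentially the same route as the paper: the quasi-inverse $\Psi$ is exactly the paper's $g\mapsto\underline{g}$, the verification $\Psi(\tilde f)\simeq f$ is identical, and the chain of equivalences for $\widetilde{\Psi(f)}^{(0)}\simeq f^{(0)}$ via Lemma~\ref{lem:fun:lem2} and $\Fun_{\CC|\CC}(\CC,\CC)\simeq\FZ(\CC)$ matches the paper's argument line for line. The only difference is that where you flag the compatibility of $\widetilde{\Psi(f)}^{(1)}$ with $f^{(1)}$ as the ``hard part'' requiring bookkeeping, the paper dispatches it by writing out an explicit rectangular diagram whose bottom-left composite is the identity, so that the commutativity of~(\ref{map:phi-1+prop-2}) falls out directly rather than by abstract coherence tracking.
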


\begin{proof}
Let $\phi: f \to g$ be an isomorphism between two $\bk$-linear monoidal functors from $\CC$ to $\CD$. Then ${}_{f}\CD_f \simeq {}_{g}\CD_g$ as $\CC$-$\CC$-bimodules canonically (simply by the identity functor). It further induces a monoidal equivalence $\tilde{\phi}^{(0)}:\tilde{f}^{(0)} \simeq  \tilde{g}^{(0)}$ and a natural isomorphism
$$\tilde{\phi}^{(1)}_{c\boxtimes j}: j(c)\simeq f(c)\otimes j(\one_\CC) \xrightarrow{\phi_c\otimes\Id} g(c)\otimes j(\one_\CC) \simeq \tilde{\phi}^{(0)}(j)(c)$$
for $c\in \CC$, $j\in \Fun_{\CC|\CC}(\CC,{}_{f}\CD_f)$ such that the diagram (\ref{diag:phi-(1)}) commutes up to $\tilde{\phi}^{(1)}$. Therefore, we obtain a functor $\Fun^\otimes(\CC, \CD) \to \Fun^\ph(\CC,\CD)$ that carries $f$ to $\tilde{f}$ and $\phi$ to $\tilde{\phi}=(\tilde{\phi}^{(0)}, \tilde{\phi}^{(1)})$.

\smallskip
Conversely, let $g \in \Fun^\ph(\CC, \CD)$, i.e.
$$
g=(g^{(0)}, \quad \CC \boxtimes_{\FZ(\CC)} g^{(0)} \xrightarrow{g^{(1)}} \CD),
$$
where $g^{(0)}$ is a monoidal $\FZ(\CC)$-$\FZ(\CD)$-bimodule and $g^{(1)}$ an equivalence of monoidal right $\FZ(\CD)$-modules. Let $\one_{g^{(0)}}$ be the tensor unit of $g^{(0)}$. The functor $x \mapsto x \boxtimes_{\FZ(\CC)} \one_{g^{(0)}}$ defines a $\bk$-linear monoidal functor $\CC \to \CC \boxtimes_{\FZ(\CC)} g^{(0)}$. Then we obtain a composed $\bk$-linear monoidal functor
$$
\underline{g}: \CC \to \CC \boxtimes_{\FZ(\CC)} g^{(0)} \xrightarrow{g^{(1)}} \CD.
$$
Suppose we are given an isomorphism $\phi:f\to g$ in $\Fun^\ph(\CC, \CD)$. Then we see from diagram (\ref{diag:phi-(1)}) that $\phi^{(1)}$ induces an isomorphism of $\bk$-linear monoidal functors $\underline{\phi}:\underline{f}\to\underline{g}$, and it is clear that $\underline{\phi}$ is independent of the representative of $\phi$.
Therefore, we obtain a functor $\Fun^\ph(\CC, \CD) \to \Fun^\otimes(\CC,\CD)$ that carries $g$ to $\underline{g}$ and carries $\phi$ to $\underline{\phi}$.

\smallskip
Given a $\bk$-linear monoidal functor $f: \CC \to \CD$, the $\bk$-linear monoidal functor $\underline{(\tilde{f})}: \CC \to \CD$ is given by $c\mapsto c\boxtimes_{\FZ(\CC)}\one_{\tilde{f}^{(0)}} \mapsto f(c)\otimes f(\one_\CC) \simeq f(c)$ for $c\in \CC$.
Therefore, we obtain a natural isomorphism $\underline{(\tilde{f})}\simeq f$.

Suppose $g \in \Fun^\ph(\CC, \CD)$. We need to show 
$g\simeq\widetilde{(\underline{g})}$ to complete the proof. We have the following equivalences of monoidal $\FZ(\CC)$-$\FZ(\CD)$-bimodules:
\begin{align} \label{eq:proof-eq-two-def-morphisms}
g^{(0)} &\simeq \FZ(\CC) \boxtimes_{\FZ(\CC)} g^{(0)} \simeq \Fun_{\CC|\CC}(\CC,\CC) \boxtimes_{\FZ(\CC)} g^{(0)} \nn
&\simeq \Fun_{\CC|\CC}(\CC,\CC\boxtimes_{\FZ(\CC)} g^{(0)})
\simeq \Fun_{\CC|\CC}(\CC, {}_{\underline{g}}\CD_{\underline{g}})
= \widetilde{(\underline{g})}^{(0)},
\end{align}
where we have used Lemma \ref{lem:fun:lem2} in the third step. Let $\phi^{(0)}$ be the composed isomorphism defined by (\ref{eq:proof-eq-two-def-morphisms}). We claim that the following diagram:
\be \label{map:phi-1+prop-2}
\xymatrix{
\CC\boxtimes_{\FZ(\CC)} g^{(0)} \ar[rr]^{\hspace{-0.5cm}\Id_\CC \boxtimes_{\FZ(\CC)} \phi^{(0)}} \ar[rrd]_{g^{(1)}} & &
\CC\boxtimes_{\FZ(\CC)} \Fun_{\CC|\CC}(\CC, {}_{\underline{g}}\CD_{\underline{g}}) \ar[d]^{\widetilde{(\underline{g})}^{(1)}} \\
& &  \CD,
}
\ee
where $\widetilde{(\underline{g})}^{(1)}$ is defined as in \eqref{eq:C-fun-CD=D}, is commutative up to a canonical natural isomorphism. Indeed, consider the following commutative diagram:
\be 
\xymatrix{
\CC\boxtimes_{\FZ(\CC)} g^{(0)}  \ar[d]_\simeq \ar[r]^-{\Id_\CC \boxtimes_{\FZ(\CC)} \phi^{(0)}} &
\CC\boxtimes_{\FZ(\CC)} \Fun_{\CC|\CC}(\CC, {}_{\underline{g}}\CD_{\underline{g}}) \ar[r]^-{\widetilde{(\underline{g})}^{(1)}} &
\CD \\
\CC\boxtimes_{\FZ(\CC)} \Fun_{\CC|\CC}(\CC, \CC) \boxtimes_{\FZ(\CC)} g^{(0)} \ar[r]^\simeq &
\CC\boxtimes_{\FZ(\CC)} \Fun_{\CC|\CC}(\CC, \CC \boxtimes_{\FZ(\CC)} g^{(0)}) \ar[u]^{\Id_\CC \boxtimes_{\FZ(\CC)} \Fun_{\CC|\CC}(\CC, g^{(1)})} \ar[r]^-\gamma  &
\CC\boxtimes_{\FZ(\CC)} g^{(0)} \ar[u]^{g^{(1)}} \nonumber
}
\ee
where both $\widetilde{(\underline{g})}^{(1)}$ and $\gamma$ are defined by the equivalence \eqref{eq:C-fun-CD=D}; the commutativity of the left sub-diagram is just the definition of $\phi^{(0)}$; that of the right sub-diagram is obvious. 
Notice that the composition of the arrows in the left column and those in the bottom row is nothing but the identity functor. Therefore, diagram (\ref{map:phi-1+prop-2}) is commutative up to a canonical natural isomorphism, denoted by $\phi^{(1)}$.

Using the explicit formula $f\boxtimes_{\FZ(\CD)} g \mapsto f\boxtimes_\CD g$ in Theorem\,\ref{thm:M-N}, it is routine to check that the isomorphism $g\simeq\widetilde{(\underline{g})}$ given by the pair $(\phi^{(0)},\phi^{(1)})$ is a natural isomorphism.
\end{proof}

\begin{rem}
It is useful to know how much of a $\bk$-linear monoidal functor $f$ is determined by $f^{(0)}$. One can introduce an equivalence relation between monoidal functors in $\Fun^\otimes(\CC, \CD)$: $f\sim g$ if there is a $\bk$-linear monoidal auto-equivalence $h:\CD \to \CD$ such that $f\simeq h\circ g$. We denote the equivalence class of $f$ by $[f]$ and the equivalence class of $f^{(0)}$ by $[f^{(0)}]$. Then the map $[f] \mapsto [f^{(0)}]$ is a bijection.
\end{rem}

\subsection{Fully-faithfulness of the center functor}   \label{sec:fully-faithful}

In this subsection, we assume $k$ is an algebraically closed field of characteristic zero.

\begin{defn}
We say that a finite category $\CC$ over $k$ is {\it semisimple} if $\CC\simeq\RMod_A(\bk)$ for some finite-dimensional semisimple $k$-algebra $A$.
A {\it multi-fusion category} is a semisimple multi-tensor category. A {\it fusion category} is a multi-fusion category with a simple tensor unit.
\end{defn}

We need the following fundamental result.

\begin{thm}\cite{eno2005} \label{thm:eno-semisimple}
Let $\CC$ be a multi-fusion category and $\CM,\CN$ semisimple left $\CC$-modules. Then the category $\Fun_\CC(\CM,\CN)$ is semisimple. In particular, $\Fun_\CC(\CM,\CM)$ is a multi-fusion category.
\end{thm}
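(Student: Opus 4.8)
The plan is to reduce the statement to a categorical Maschke-type argument. First I would use that a multi-fusion category is in particular a multi-tensor category, so by Proposition~\ref{prop:mod-alg} we may write $\CM\simeq\RMod_A(\CC)$ and $\CN\simeq\RMod_B(\CC)$ for algebras $A,B\in\Alg(\CC)$. Combining parts (1) and (2) of Theorem~\ref{thm:tensor-prod} (taking $M=A$, $N=B$) gives a canonical equivalence $\Fun_\CC(\CM,\CN)\simeq\Fun_\CC(\RMod_A(\CC),\RMod_B(\CC))\simeq\BMod_{A|B}(\CC)$, and this category is finite by Lemma~\ref{lem:mod-alg}. So it suffices to show $\BMod_{A|B}(\CC)$ is semisimple.

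The key input — and the step I expect to be the main obstacle — is that, because $k$ is algebraically closed of characteristic zero and $\CM,\CN$ are semisimple, the algebras $A$ and $B$ may be chosen to be \emph{separable}, meaning the multiplication $A\otimes A\to A$ admits a section in $\BMod_{A|A}(\CC)$ (and similarly for $B$). I would take this as a black box: it is essentially the content of \cite{eno2005}, and a self-contained treatment would pass through Frobenius--Perron dimension theory to prove $\dim\CC\neq0$ in characteristic zero and then build an explicit separability section out of $\dim\CC^{-1}$ for the internal-End algebra $[P,P]$ of a progenerator $P\in\CM$ (available since $\CM$ is enriched in $\CC$ by Lemma~\ref{lem:M-enrich-in-C}). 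Everything downstream of this is formal.

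Granting separability, I would run the averaging argument. Given a short exact sequence $0\to X\to Y\xrightarrow{g}Z\to0$ in $\BMod_{A|B}(\CC)$, its image in $\CC$ splits since $\CC$ is semisimple; pick a splitting $s\colon Z\to Y$ in $\CC$ with $g\circ s=\Id_Z$. Averaging $s$ on the left over $A$ via a separability section of the multiplication of $A$, and on the right over $B$ via one for $B$, produces a morphism $\bar s\colon Z\to Y$; centrality of the separability sections makes $\bar s$ a bimodule map, and the fact that they are sections of the respective multiplications, together with the $A$-$B$-linearity of $g$, gives $g\circ\bar s=\Id_Z$. Hence every short exact sequence in $\BMod_{A|B}(\CC)$ splits, so $\Fun_\CC(\CM,\CN)\simeq\BMod_{A|B}(\CC)$ is semisimple.

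Finally, for the ``in particular'' clause I take $\CM=\CN$, so $\Fun_\CC(\CM,\CM)\simeq\BMod_{A|A}(\CC)$ is a finite semisimple monoidal category, the tensor product being composition of module functors (a balanced $\bk$-module functor). It remains to check rigidity: since $\CM$ is finite semisimple, any $F\in\Fun_\CC(\CM,\CM)$ has a left and a right adjoint as a $\bk$-linear functor, and since $\CC$ is rigid these adjoints are automatically $\CC$-module functors; thus $\Fun_\CC(\CM,\CM)$ is closed under taking adjoints and hence rigid. Being finite, semisimple, and rigid, it is a multi-fusion category.
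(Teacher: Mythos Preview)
The paper does not provide its own proof of this statement: it is quoted from \cite{eno2005} and used as a black box. So there is nothing to compare your argument against in the present paper.

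That said, your outline is a correct sketch of the standard proof, and you have honestly flagged the one nontrivial step. The reduction $\Fun_\CC(\CM,\CN)\simeq\BMod_{A|B}(\CC)$ via Proposition~\ref{prop:mod-alg} and Theorem~\ref{thm:tensor-prod} is sound, and the Maschke-type averaging argument goes through once $A$ and $B$ are separable. You are right that separability is the crux and is precisely what \cite{eno2005} supplies (via global dimension $\ne 0$ in characteristic zero); everything else in your write-up is formal. Your rigidity argument for the ``in particular'' clause is also fine: over a rigid $\CC$, adjoints of $\CC$-module functors are again $\CC$-module functors, so $\Fun_\CC(\CM,\CM)$ is closed under adjoints.
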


\void{
\begin{rem}
The existence of a closed multi-fusion $\CC$-$\CD$-bimodule $\CM$ requires that both $\CC$ and $\CD$ are closed (or nondegenerate).
\end{rem}
}

\begin{defn}
We say that a braided fusion category $\CC$ is {\it nondegenerate}, if the evident braided monoidal functor $\overline\CC\boxtimes\CC\to\FZ(\CC)$ is an equivalence; that is, the monoidal $\CC$-$\CC$-bimodule $\CC$ is closed.
\end{defn}

\begin{rem}
We refer readers to \cite{dgno} for equivalent conditions of the nondegeneracy of a braided fusion category.
\end{rem}


\begin{defn} \label{def:mf-bimodule}
Let $\CC, \CD$ be braided multi-fusion categories and $\CM$ a monoidal $\CC$-$\CD$-bimodule. We say that $\CM$ is a {\it multi-fusion $\CC$-$\CD$-bimodule} if $\CM$ is also a multi-fusion category.
\end{defn}

\begin{thm} \label{thm:closed-bimodule}
Let $\CC, \CD, \CE$ be nondegenerate braided fusion categories and ${}_\CC\CM_\CD$, ${}_\CD\CN_\CE$ closed multi-fusion bimodules. Then $\CM\boxtimes_\CD \CN$ is a closed multi-fusion $\CC$-$\CE$-bimodule.
\end{thm}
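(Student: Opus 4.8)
The plan is to verify three things in turn: that $\CP:=\CM\boxtimes_\CD\CN$ is a multi-fusion category, that it carries a canonical monoidal $\CC$-$\CE$-bimodule structure, and — the crux — that this structure is closed. Since $\CD$ is a braided multi-tensor category with $\CM$ a monoidal right $\CD$-module and $\CN$ a monoidal left $\CD$-module, the discussion opening Section~\ref{sec:rigidity} makes $\CP$ a finite monoidal category receiving a monoidal functor from $\CM\boxtimes\CN$. Both $\CM,\CN$ are rigid, and by Proposition~\ref{prop:mod-alg} together with Corollary~\ref{cor:tensor-prod}(1) the underlying category of $\CP$ is equivalent to a functor category between semisimple left $\CD$-module categories, hence semisimple by Theorem~\ref{thm:eno-semisimple}. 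In particular each $x\boxtimes_\CD y$ (with $x\in\CM$, $y\in\CN$ simple) is semisimple, so $\CP$ is rigid by Corollary~\ref{cor:rigidity}; thus $\CP$ is multi-fusion. Closedness also forces $\FZ(\CM)\simeq\CC^\flp\boxtimes\CD$ and $\FZ(\CN)\simeq\CD^\flp\boxtimes\CE$, which are fusion categories, so $\CM$ and $\CN$ are indecomposable multi-fusion categories by Proposition~\ref{prop:center-prod}(1).

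Next, the monoidal functors $\CM\to\CP$, $m\mapsto m\boxtimes_\CD\one_\CN$, and $\CN\to\CP$, $n\mapsto\one_\CM\boxtimes_\CD n$, transport the central functors $\CC^\flp\to\CM$ and $\CE\to\CN$ underlying the monoidal-module structures into $\CP$; since $\boxtimes_\CD$ is a bifunctor carrying half-braidings to half-braidings, and since the images of $\CC^\flp$ and $\CE$ centralise each other in $\CP$ (because $\CC$ and $\CE$ already centralise the respective $\CD$-actions), these assemble into a braided monoidal functor $\phi:\CC^\flp\boxtimes\CE\to\FZ(\CP)$, giving $\CP$ the desired bimodule structure. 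As $\phi$ is monoidal, $\one_{\FZ(\CP)}$ is simple, so $\FZ(\CP)$ is braided fusion and $\CP$ is indecomposable; it remains to show $\phi$ is an equivalence.

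Regarding $\CM,\CN$ as bimodule categories over $\CC,\CD,\CE$ (which are indecomposable, being fusion), Theorem~\ref{thm:M-N} yields a monoidal equivalence
\begin{equation*}
\Fun_{\CC|\CD}(\CM,\CM)\boxtimes_{\FZ(\CD)}\Fun_{\CD|\CE}(\CN,\CN)\;\simeq\;\Fun_{\CC|\CE}(\CP,\CP).
\end{equation*}
The key input is the identification $\FZ(\CM)\simeq\Fun_{\CC|\CD}(\CM,\CM)$, valid because $\CM$ is closed: the canonical faithful monoidal functor out of the Drinfeld center — which exists because the $\CC$- and $\CD$-actions on $\CM$ factor through $\FZ(\CM)$, so that every $\CM$-$\CM$-bimodule endofunctor is a $\CC$-$\CD$-bimodule endofunctor — becomes full and essentially surjective once $\CM$ is closed; similarly for $\CN$. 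Substituting $\FZ(\CM)\simeq\CC^\flp\boxtimes\CD$, $\FZ(\CN)\simeq\CD^\flp\boxtimes\CE$ and $\FZ(\CD)\simeq\CD^\flp\boxtimes\CD$ (nondegeneracy of $\CD$), the left-hand relative tensor product collapses — the $\CD$-factors cancelling against $\FZ(\CD)$ — to give $\Fun_{\CC|\CE}(\CP,\CP)\simeq\CC^\flp\boxtimes\CE$. There is a canonical faithful monoidal functor $\psi:\FZ(\CP)\to\Fun_{\CC|\CE}(\CP,\CP)$, and tracing the explicit formula $f\boxtimes_{\FZ(\CD)}g\mapsto f\boxtimes_\CD g$ of Theorem~\ref{thm:M-N} identifies $\psi\circ\phi$, up to isomorphism, with the equivalence just produced; since $\psi$ is faithful, $\phi$ is fully faithful. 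A Frobenius--Perron dimension count — using $\mathrm{FPdim}\,\FZ(\CX)=\mathrm{FPdim}(\CX)^2$ for fusion $\CX$ applied to the indecomposable corners of $\CM,\CN,\CP$ via Theorem~\ref{thm:structure}, the multiplicativity of $\mathrm{FPdim}$ under relative tensor products, and closedness — then forces $\mathrm{FPdim}\,\FZ(\CP)=\mathrm{FPdim}(\CC)\,\mathrm{FPdim}(\CE)=\mathrm{FPdim}(\CC^\flp\boxtimes\CE)$, so the fully faithful $\phi$ is also essentially surjective, hence an equivalence; thus $\CP$ is a closed multi-fusion $\CC$-$\CE$-bimodule.

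The main obstacle is the closedness step, and within it the identification $\FZ(\CM)\simeq\Fun_{\CC|\CD}(\CM,\CM)$ for closed monoidal bimodules, together with the coherent bookkeeping of all the relative tensor products and central functors needed to recognise $\psi\circ\phi$ as the Theorem~\ref{thm:M-N} equivalence; a secondary technical point is running the dimension count in the multi-fusion rather than fusion setting, which forces one to pass to indecomposable corners.
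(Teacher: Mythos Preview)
Your argument has a genuine gap at the step you yourself flag as the ``key input'': the claimed equivalence $\FZ(\CM)\simeq\Fun_{\CC|\CD}(\CM,\CM)$ for a closed monoidal $\CC$-$\CD$-bimodule $\CM$ is false. The functor $\FZ(\CM)=\Fun_{\CM|\CM}(\CM,\CM)\to\Fun_{\CC|\CD}(\CM,\CM)$ certainly exists for the reason you state, but closedness only says that $\CC^\flp\boxtimes\CD\to\FZ(\CM)$ is an equivalence; it says nothing about the forgetful functor to $\Fun_{\CC|\CD}(\CM,\CM)$, whose target depends on the $\CC$- and $\CD$-actions on the \emph{underlying category} of $\CM$, not on its monoidal structure. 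For a concrete counterexample take $\CC=\CD=\bk$ and $\CM=\Fun_\bk(\bk^n,\bk^n)$ with $n>1$: then $\CM$ is closed since $\FZ(\CM)\simeq\bk$, but $\Fun_{\bk|\bk}(\CM,\CM)=\Fun_\bk(\CM,\CM)$ has $n^4$ simple objects. The same example shows that your target $\Fun_{\CC|\CE}(\CP,\CP)$ is in general much larger than $\CC^\flp\boxtimes\CE$, so the whole strategy of computing $\FZ(\CP)$ by identifying it with $\Fun_{\CC|\CE}(\CP,\CP)$ cannot succeed, and with it your fully-faithfulness argument for $\phi$ collapses.

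The paper's proof sidesteps this by using $\CM$ itself, rather than $\CC$ and $\CD$, as the multi-tensor category over which to take bimodule functors. After reducing to $\CC=\bk$, one regards the monoidal functor $\CM\to\CP$, $m\mapsto m\boxtimes_\CD\one_\CN$, and applies Theorem~\ref{thm:fun=ph-mor} to obtain $\CN\simeq\Fun_{\CM|\CM}(\CM,\CP)=\Fun_{\CM|\CP}(\CP,\CP)$. Now the Schauenburg/Etingof--Ostrik result (Corollary~3.35 in \cite{eo}) gives $\FZ(\CN)\simeq\FZ(\CM^\rev\boxtimes\CP)\simeq\FZ(\CM)^\flp\boxtimes\FZ(\CP)$, and comparing with the closedness hypothesis $\FZ(\CN)\simeq\FZ(\CM)^\flp\boxtimes\CE$ yields $\FZ(\CP)\simeq\CE$. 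The point is that $\CM$ acts on $\CP$ through its full monoidal structure, not merely through the images of $\CC$ and $\CD$, and it is this that makes the bimodule-functor description capture the center.
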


\begin{proof}
Note that we have $\CM\boxtimes_\CD\CN \simeq \CM\boxtimes_{\overline\CC\boxtimes\CD} (\CC^\rev\boxtimes\CN)$, and $\CC^\rev\boxtimes\CN$ is a closed multi-fusion $\overline\CC\boxtimes\CD$-$\overline\CC\boxtimes\CE$-bimodule. Therefore,
it is enough to prove the special case where $\CC=\bk$ and $\CD=\FZ(\CM)$.

Let $\CP = \CM\boxtimes_{\FZ(\CM)}\CN$. By Corollary \ref{cor:rigidity} and Theorem \ref{thm:eno-semisimple}, $\CP$ is rigid, and therefore $\CP$ is a multi-fusion category. Since $\CD=\FZ(\CM)$ is a braided fusion category, the multi-fusion category $\CM$ is indecomposable. Invoking Theorem \ref{thm:fun=ph-mor}, we derive an equivalence of monoidal $\FZ(\CM)$-$\FZ(\CP)$-bimodules $\CN \simeq \Fun_{\CM|\CM}(\CM,\CP) = \Fun_{\CM|\CP}(\CP,\CP)$. Then, by Corollary 3.35 in \cite{eo} (see also \cite{schauenburg}), the canonical braided monoidal functor $\FZ(\CM^\rev\boxtimes\CP)\to\FZ(\CN)$ is an equivalence.
Then, from the assumption $\FZ(\CN)\simeq\overline{\FZ(\CM)}\boxtimes\CE$, we conclude that the canonical functor $\CE\to\FZ(\CP)$ is an equivalence.
\end{proof}

We introduce two categories $\mfus$ and $\bfus$ as follows:
\bnu

\item The category $\mfus$ of indecomposable multi-fusion categories over $k$ with the equivalence classes of nonzero semisimple bimodules as morphisms.

\item The category $\bfus$ of nondegenerate braided fusion categories over $k$ with the equivalence classes of closed multi-fusion bimodules as morphisms.

\enu
Note that $\mfus$ and $\bfus$ are well-defined due to Theorem \ref{thm:eno-semisimple} and Theorem \ref{thm:closed-bimodule}.
Both are symmetric monoidal categories under $\boxtimes$.

\begin{thm} \label{thm:fully-faithful}
The center functor from Theorem \ref{thm:functorial} restricts to a fully faithful functor $\FZ: \mfus \to \bfus$.
\end{thm}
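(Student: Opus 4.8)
The plan is to exhibit an explicit two-sided inverse of $\FZ$ on morphism sets, following the boundary-bulk heuristic of the introduction. First one checks that $\FZ$ really does restrict to a functor $\mfus\to\bfus$: on objects this is the classical fact that the center of a fusion category is a nondegenerate braided fusion category, extended to the indecomposable multi-fusion case through the canonical equivalence $\FZ(\CC)\simeq\FZ(\CC_{ii})$ of Theorem~\ref{thm:structure}(7); on morphisms, $\Fun_{\CC|\CD}(\CM,\CM)$ is multi-fusion by Theorem~\ref{thm:eno-semisimple}, and its closedness as a monoidal $\FZ(\CC)$-$\FZ(\CD)$-bimodule is obtained exactly as the corresponding step in the proof of Theorem~\ref{thm:closed-bimodule}. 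Having fixed $\CC,\CD\in\mfus$, it then suffices to show the map $\CM\mapsto\Fun_{\CC|\CD}(\CM,\CM)$ is a bijection between equivalence classes of nonzero semisimple $\CC$-$\CD$-bimodules and of closed multi-fusion $\FZ(\CC)$-$\FZ(\CD)$-bimodules.

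\emph{The inverse.} Given a closed multi-fusion $\FZ(\CC)$-$\FZ(\CD)$-bimodule $\CE$, set $\CB:=\CC\boxtimes_{\FZ(\CC)}\CE\boxtimes_{\FZ(\CD)}\CD^\rev$. Regarding $\CC$ as a closed $\bk$-$\FZ(\CC)$-bimodule and $\CD^\rev$ as a closed $\FZ(\CD)$-$\bk$-bimodule — in both cases the structure functor is an identity, so this is tautological — two applications of Theorem~\ref{thm:closed-bimodule} show that $\CB$ is a closed $\bk$-$\bk$-bimodule; in particular $\CB$ is a multi-fusion category with $\FZ(\CB)\simeq\bk$. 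By Corollary~\ref{cor:matrix}, $\CB\simeq\Fun_\bk(\CM,\CM)$ for some finite semisimple $\bk$-module $\CM$. The canonical monoidal functors $\CC\to\CB$, $c\mapsto c\boxtimes\one\boxtimes\one$, and $\CD^\rev\to\CB$, $d\mapsto\one\boxtimes\one\boxtimes d$, have commuting images — since $(c\boxtimes\one\boxtimes\one)\otimes(\one\boxtimes\one\boxtimes d)=c\boxtimes\one\boxtimes d=(\one\boxtimes\one\boxtimes d)\otimes(c\boxtimes\one\boxtimes\one)$ in $\CB$ — so transporting them along $\CB\simeq\Fun_\bk(\CM,\CM)$ equips $\CM$ with the structure of a nonzero semisimple $\CC$-$\CD$-bimodule. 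The assignment $\CE\mapsto\CM$ is the proposed inverse; the bimodule $\CM$ is well defined up to equivalence because the only ambiguity in $\CB\simeq\Fun_\bk(\CM,\CM)$ is by monoidal auto-equivalences of $\Fun_\bk(\CM,\CM)$, which are inner and hence act trivially on module categories.

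\emph{Verification.} For faithfulness I would show that $\CM\mapsto\FZ(\CM)\mapsto\CM$ is the identity: writing $\Fun_\CC(\CM,\CM)\simeq\Fun_{\CC|\CD}(\CM,\CM)\boxtimes_{\FZ(\CD)}\CD^\rev$ and $\Fun_\bk(\CM,\CM)\simeq\CC\boxtimes_{\FZ(\CC)}\Fun_\CC(\CM,\CM)$ — both instances of Theorem~\ref{thm:M-N}, obtained by viewing the $\Fun$'s as $\FZ$ of the bimodules ${}_\CC\CM_\CD$, ${}_\CD\CD_\bk$, ${}_\bk\CC_\CC$ and composing — yields $\CC\boxtimes_{\FZ(\CC)}\Fun_{\CC|\CD}(\CM,\CM)\boxtimes_{\FZ(\CD)}\CD^\rev\simeq\Fun_\bk(\CM,\CM)$ compatibly with the tautological $\CC$- and $\CD^\rev$-inclusions, so the construction applied to $\CE=\FZ(\CM)$ returns $\CM$. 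For fullness I must show $\FZ(\CM)\simeq\CE$ for the $\CM$ built from an arbitrary $\CE$. Set $\CC':=\CC\boxtimes\CD^\rev$, an indecomposable multi-fusion category with $\FZ(\CC')\simeq\FZ(\CC)\boxtimes\FZ(\CD)^\flp$ by Proposition~\ref{prop:center-prod}; the joint action assembles into a monoidal functor $f:\CC'\to\CB\simeq\Fun_\bk(\CM,\CM)$, and a re-association gives $\CB\simeq\CC'\boxtimes_{\FZ(\CC')}\CE$ under which $f$ becomes the canonical functor $c'\mapsto c'\boxtimes\one_\CE$. Applying Theorem~\ref{thm:fun=ph-mor} to $f$ identifies its $\ph$-datum on the one hand as $f^{(0)}\simeq\Fun_{\CC'|\CC'}(\CC',{}_f\CB_f)\simeq\Fun_{\CC'}(\CM,\CM)=\Fun_{\CC|\CD}(\CM,\CM)=\FZ(\CM)$, and on the other hand — since $f$ is precisely the monoidal functor attached to the $\ph$-datum $(\CE,\Id)$ — as $f^{(0)}\simeq\CE$; hence $\FZ(\CM)\simeq\CE$. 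Tracking the $\FZ(\CC)$- and $\FZ(\CD)$-module structures through these identifications (equivalently the left $\FZ(\CC')$-structure, using $\FZ(\CC)^\flp\boxtimes\FZ(\CD)=\FZ(\CC')^\flp$) upgrades this to an equivalence of monoidal $\FZ(\CC)$-$\FZ(\CD)$-bimodules.

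The main obstacle is not any single hard estimate but the coherence bookkeeping in the fullness argument: verifying that the chain $\CE\simeq f^{(0)}\simeq\Fun_{\CC|\CD}(\CM,\CM)$ respects the entire monoidal-bimodule structure, and that the various re-associations of iterated relative tensor products — including the systematic occurrences of $(-)^\flp$ in the closedness conditions and in $\FZ(\CC\boxtimes\CD^\rev)$ — match up. The genuinely substantive inputs (rigidity and triviality of center of $\CB$ via Theorem~\ref{thm:closed-bimodule} and Corollary~\ref{cor:rigidity}, the matrix-category recognition of Corollary~\ref{cor:matrix}, and the alternative description of monoidal functors in Theorem~\ref{thm:fun=ph-mor}) are all already available, so the remaining work is essentially organizational.
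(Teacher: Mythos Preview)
Your proposal is correct and follows essentially the same route as the paper: both arguments hinge on forming $\CC\boxtimes_{\FZ(\CC)}\CE\boxtimes_{\FZ(\CD)}\CD^\rev$, showing via Theorem~\ref{thm:closed-bimodule} that it has trivial center, invoking Corollary~\ref{cor:matrix} to recognize it as $\Fun_\bk(\CM,\CM)$, and then using Theorem~\ref{thm:fun=ph-mor} (applied to $\CC'=\CC\boxtimes\CD^\rev$) to identify $\CE$ with $\Fun_{\CC|\CD}(\CM,\CM)$.

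The only organizational difference is that the paper bootstraps through three claims---first $\CC=\CD=\bk$, then $\CD=\bk$, then the general reduction via $\CC\boxtimes\CD^\rev$---and lets the groupoid equivalence of Theorem~\ref{thm:fun=ph-mor} deliver both directions of bijectivity at once in Claim~2, whereas you go straight to the general case and verify the two compositions $\CM\mapsto\FZ(\CM)\mapsto\CM$ and $\CE\mapsto\CM\mapsto\FZ(\CM)$ separately (the first via Theorem~\ref{thm:M-N} directly, the second via Theorem~\ref{thm:fun=ph-mor}). Your presentation is slightly more explicit about the faithfulness half; the paper's staged argument is a bit cleaner in that Claim~2 doesn't need a separate faithfulness check. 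One small remark: your aside that monoidal auto-equivalences of $\Fun_\bk(\CM,\CM)$ are inner, used to argue well-definedness of $\CE\mapsto\CM$, is not needed---once you verify that the two compositions are identities, well-definedness is automatic, so you can simply drop that sentence.
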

\begin{proof}
By Corollary 3.9 in \cite{dgno} (see also \cite{mueger2}), the center of a fusion category is nondegenerate. Thus by Theorem \ref{thm:structure}(7), the center of an indecomposable multi-fusion category is also nondegenerate. This shows that the functor $\FZ$ is well-defined on objects. Let $\CC,\CD$ be indecomposable multi-fusion categories and $\CM$ a nonzero semisimple $\CC$-$\CD$-bimodule. Then the canonical braided monoidal functor $\FZ(\CC^\rev\boxtimes\CD) \to \FZ(\Fun_{\CC|\CD}(\CM,\CM))$ is an equivalence by Corollary 3.35 in \cite{eo}. This shows that the functor $\FZ$ is well-defined on morphisms.

We complete the proof by showing the fully-faithfulness of $\FZ$ in three claims:

{\it Claim 1}. The linear map $\FZ: \Hom_{\mfus}(\bk,\bk) \to \Hom_{\bfus}(\bk, \bk)$ is bijective. In fact, if $\CE$ is a closed multi-fusion $\bk$-$\bk$-bimodule, i.e. $\FZ(\CE)\simeq\bk$, then Corollary \ref{cor:matrix} implies that $\CE\simeq\Fun_{\bk|\bk}(\CM,\CM)$ for some nonzero semisimple category $\CM$. This shows that $\FZ$ is surjective. The injectivity of $\FZ$ is obvious.

{\it Claim 2}. The linear map $\FZ: \Hom_{\mfus}(\CC,\bk) \to \Hom_{\bfus}(\FZ(\CC),\bk)$ is bijective for an indecomposable multi-fusion category $\CC$. Let $\CE$ be a closed multi-fusion $\FZ(\CC)$-$\bk$-bimodule. Note that $\CC\simeq\Fun_{\bk|\CC}(\CC,\CC)$ represents a morphism $\bk\to\FZ(\CC)$ in $\bfus$. So, $\CC\boxtimes_{\FZ(\CC)}\CE$ represents a morphism $\bk\to\bk$ in $\bfus$. Claim 1 then implies that $\CC\boxtimes_{\FZ(\CC)}\CE \simeq \Fun_{\bk|\bk}(\CM,\CM)$ for some nonzero semisimple category $\CM$. Invoking Theorem \ref{thm:fun=ph-mor}, we obtain a $\bk$-linear monoidal functor $\CC\to\Fun_{\bk|\bk}(\CM,\CM)$.

Conversely, let $\CM$ be a nonzero semisimple $\CC$-$\bk$-bimodule, regarded as a $\bk$-linear monoidal functor $\CC\to\Fun_{\bk|\bk}(\CM,\CM)$. Theorem \ref{thm:fun=ph-mor} says that there is a monoidal $\FZ(\CC)$-$\bk$-bimodule $\CE$ and an equivalence $\CC\boxtimes_{\FZ(\CC)}\CE \simeq \Fun_{\bk|\bk}(\CM,\CM)$. Moreover, we may take $\CE = \Fun_{\CC|\CC}(\CC,\Fun_{\bk|\bk}(\CM,\CM))$, which is nothing but $\Fun_{\CC|\bk}(\CM,\CM)$, the image of $\CM$ under the center functor.

Theorem \ref{thm:fun=ph-mor} states that these two constructions are inverse to each other, so we conclude the claim.

{\it Claim 3}. The linear map $\FZ: \Hom_{\mfus}(\CC,\CD) \to \Hom_{\bfus}(\FZ(\CC),\FZ(\CD))$ is bijective for indecomposable multi-fusion categories $\CC,\CD$. Actually, this map is equivalent to the map $\FZ: \Hom_{\mfus}(\CC\boxtimes\CD^\rev,\bk) \to \Hom_{\bfus}(\FZ(\CC\boxtimes\CD^\rev),\bk)$, which is bijective by Claim 2.
\end{proof}

\begin{rem}
For a fusion category $\CC$, it was known that there is a bijection between closed fusion modules over $\FZ(\CC)$ and indecomposable semisimple $\CC$-modules \cite{eno2008, eno2009,dmno}. We 
give a new proof of this result and provide a conceptual framework to understand it. 
\end{rem}


Note that every multi-fusion category is a direct sum of indecomposable ones. Combining Theorem \ref{thm:fully-faithful} with Proposition \ref{prop:inv-bimod-center}, we obtain the following corollary.
\begin{cor}[\cite{eno2008}] \label{cor:eno2008}
Two multi-fusion categories $\CC$ and $\CD$ are Morita equivalent if and only if $\FZ(\CC) \simeq \FZ(\CD)$ as braided multi-fusion categories.
\end{cor}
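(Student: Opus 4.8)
The plan is to prove the two implications separately, using Proposition~\ref{prop:inv-bimod-center} for one direction and Theorem~\ref{thm:fully-faithful} (together with the decomposition of a multi-fusion category into indecomposable summands) for the other. I will not assume $\CC,\CD$ are indecomposable, since the statement is for arbitrary multi-fusion categories; the indecomposable case is exactly where Theorem~\ref{thm:fully-faithful} applies directly.

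For the ``only if'' direction, suppose $\CC$ and $\CD$ are Morita equivalent, witnessed by an invertible $\CC$-$\CD$-bimodule $\CM$. Then Proposition~\ref{prop:inv-bimod-center} supplies a diagram of monoidal equivalences $\FZ(\CC)\xrightarrow{\simeq}\Fun_{\CC|\CD}(\CM,\CM)\xleftarrow{\simeq}\FZ(\CD)$ whose composite is a \emph{braided} monoidal equivalence $\FZ(\CC)\simeq\FZ(\CD)$. Nothing further is required here.

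For the ``if'' direction, write $\CC\simeq\bigoplus_{i\in I}\CC_i$ and $\CD\simeq\bigoplus_{j\in J}\CD_j$ as direct sums of indecomposable multi-fusion categories. By Proposition~\ref{prop:center-prod}(1), $\FZ(\CC)\simeq\bigoplus_{i\in I}\FZ(\CC_i)$ and $\FZ(\CD)\simeq\bigoplus_{j\in J}\FZ(\CD_j)$; moreover, as recorded in the proof of Theorem~\ref{thm:fully-faithful} (via Theorem~\ref{thm:structure}(7) and \cite{eno2004}), each $\FZ(\CC_i)$ and $\FZ(\CD_j)$ is a nondegenerate braided fusion category, hence has a simple tensor unit and is indecomposable as a braided monoidal category. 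Consequently these are precisely the indecomposable summands of $\FZ(\CC)$ and $\FZ(\CD)$, so a braided monoidal equivalence $G\colon\FZ(\CC)\xrightarrow{\simeq}\FZ(\CD)$ permutes them: there is a bijection $\sigma\colon I\to J$ and braided equivalences $G_i\colon\FZ(\CC_i)\xrightarrow{\simeq}\FZ(\CD_{\sigma(i)})$. Each $G_i$ determines an isomorphism in $\bfus$, namely the monoidal category $\FZ(\CD_{\sigma(i)})$ equipped with the $\FZ(\CC_i)$-$\FZ(\CD_{\sigma(i)})$-bimodule structure obtained by composing $G_i^\flp\boxtimes\Id$ with the nondegeneracy equivalence of $\FZ(\CD_{\sigma(i)})$; its inverse is built from $G_i^{-1}$ in the same way, the defining identities being the evident ``twisted regular bimodule'' computations. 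Since $\FZ\colon\mfus\to\bfus$ is fully faithful by Theorem~\ref{thm:fully-faithful}, this invertible morphism and its inverse lift uniquely to morphisms $\CM_i\colon\CC_i\to\CD_{\sigma(i)}$ and $\CN_i\colon\CD_{\sigma(i)}\to\CC_i$ in $\mfus$, i.e.\ to nonzero semisimple bimodules, and faithfulness of $\FZ$ forces $\CN_i\boxtimes_{\CD_{\sigma(i)}}\CM_i\simeq\CC_i$ and $\CM_i\boxtimes_{\CC_i}\CN_i\simeq\CD_{\sigma(i)}$ as bimodules; thus $\CM_i$ is an invertible $\CC_i$-$\CD_{\sigma(i)}$-bimodule. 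Finally, put $\CM=\bigoplus_{i\in I}\CM_i$, with $\CC_i$ acting on the left and $\CD_{\sigma(i)}$ on the right of the $i$-th summand. Because tensor products over $\CD\simeq\bigoplus_j\CD_j$ distribute over the block decomposition and annihilate cross terms, $\CM\boxtimes_\CD\bigl(\bigoplus_i\CN_i\bigr)\simeq\bigoplus_i\CC_i\simeq\CC$ and $\bigl(\bigoplus_i\CN_i\bigr)\boxtimes_\CC\CM\simeq\bigoplus_j\CD_j\simeq\CD$ as bimodules, so $\CM$ is an invertible $\CC$-$\CD$-bimodule and $\CC$ and $\CD$ are Morita equivalent.

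The serious mathematical content is entirely contained in Theorem~\ref{thm:fully-faithful} and Proposition~\ref{prop:inv-bimod-center}; what remains is bookkeeping, and the one place that genuinely needs care is the ``if'' direction: one must justify that a braided equivalence of the centers respects the indecomposable decomposition (this rests on each $\FZ(\CC_i)$ being a connected braided fusion category), that such a blockwise equivalence corresponds to an invertible object of $\bfus$ (the twisted-regular-bimodule identities), and that a direct sum of invertible bimodules over a direct sum of multi-fusion categories is again invertible. I expect these to be the main, though routine, obstacles; the ``only if'' direction is immediate.
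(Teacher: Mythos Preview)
Your proposal is correct and follows the same approach as the paper: the paper merely states ``Note that every multi-fusion category is a direct sum of indecomposable ones. Combining Theorem~\ref{thm:fully-faithful} with Proposition~\ref{prop:inv-bimod-center}, we obtain the following corollary,'' and you have faithfully unpacked exactly this combination, including the reduction to indecomposable summands and the use of full faithfulness to lift an invertible morphism in $\bfus$ to one in $\mfus$.
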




Given a multi-fusion category $\CC$, we denote the group of the equivalence classes of semisimple invertible $\CC$-$\CC$-bimodules by $\mathrm{BrPic}(\CC)$, and denote the group of the isomorphism classes of braided auto-equivalences of $\FZ(\CC)$ by $\mathrm{Aut}^{br}(\FZ(\CC))$. The following result is a consequence of Proposition\,\ref{prop:inv-bimod-center}, Example\,\ref{exam:invertible=braided-auto} and Theorem\,\ref{thm:fully-faithful}.

\begin{cor}[\cite{eno2009}] \label{cor:eno2009}
Let $\CC$ be an indecomposable multi-fusion category. We have $\mathrm{BrPic}(\CC) \simeq \mathrm{Aut}^{br}(\FZ(\CC))$ as groups.
\end{cor}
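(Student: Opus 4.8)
The plan is to realize both $\mathrm{BrPic}(\CC)$ and $\mathrm{Aut}^{br}(\FZ(\CC))$ as automorphism groups of objects in the categories $\mfus$ and $\bfus$, and then transport one onto the other along the fully faithful functor $\FZ$ of Theorem~\ref{thm:fully-faithful}. First I would reduce to the case that $\CC$ is indecomposable: $\mathrm{BrPic}(-)$ is a Morita invariant (conjugate an invertible bimodule by an invertible bimodule), and so is $\mathrm{Aut}^{br}(\FZ(-))$ since $\FZ$ itself is a Morita invariant by Proposition~\ref{prop:inv-bimod-center}; writing a general multi-fusion $\CC$ as a direct sum of indecomposable components and using Corollary~\ref{cor:eno2008} to identify which components admit invertible bimodules between them, the statement for $\CC$ follows from the indecomposable case by bookkeeping over the components. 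So assume $\CC\in\mfus$, whence $\FZ(\CC)\in\bfus$.

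Next I would identify $\mathrm{BrPic}(\CC)$ with the group of units of the monoid $\End_{\mfus}(\CC)$ of morphisms $\CC\to\CC$ in $\mfus$, composed by $\boxtimes_\CC$ with unit $\CC$. For this one only needs that an invertible $\CC$-$\CC$-bimodule is automatically nonzero and semisimple (it is a nonzero exact module category over the fusion category $\CC$), hence a genuine morphism of $\mfus$; then ``invertible bimodule'' and ``invertible morphism of $\mfus$'' coincide. Let $B$ denote the group of units of $\End_{\bfus}(\FZ(\CC))$, i.e.\ the equivalence classes of invertible closed multi-fusion $\FZ(\CC)$-$\FZ(\CC)$-bimodules. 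Since $\FZ\colon\mfus\to\bfus$ is a functor (Theorem~\ref{thm:functorial}), it induces a homomorphism of monoids $\End_{\mfus}(\CC)\to\End_{\bfus}(\FZ(\CC))$, and this is an isomorphism because $\FZ$ is fully faithful (Theorem~\ref{thm:fully-faithful}); restricting to units yields a group isomorphism $\mathrm{BrPic}(\CC)\xrightarrow{\ \simeq\ }B$.

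It then remains to produce a group isomorphism $B\simeq\mathrm{Aut}^{br}(\FZ(\CC))$. In one direction, to a braided auto-equivalence $\alpha$ of $\FZ(\CC)$ I would assign the ``graph'' bimodule $\CM_\alpha$ whose underlying fusion category is $\FZ(\CC)$ and whose monoidal $\FZ(\CC)$-$\FZ(\CC)$-bimodule structure is the braided equivalence $\FZ(\CC)^{\rev}\boxtimes\FZ(\CC)\xrightarrow{\,\mathrm{id}\boxtimes\alpha\,}\FZ(\CC)^{\rev}\boxtimes\FZ(\CC)\xrightarrow{\ \simeq\ }\FZ(\FZ(\CC))$, the last arrow being the canonical equivalence available since $\FZ(\CC)$ is nondegenerate. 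One checks that $\CM_\alpha$ is closed and invertible with inverse $\CM_{\alpha^{-1}}$, that $\alpha\mapsto[\CM_\alpha]$ is a group (anti)homomorphism via a canonical equivalence $\CM_\alpha\boxtimes_{\FZ(\CC)}\CM_\beta\simeq\CM_{\beta\circ\alpha}$, and that it is injective: a monoidal $\FZ(\CC)$-$\FZ(\CC)$-bimodule equivalence $\CM_\alpha\simeq\CM_\beta$ is a monoidal self-equivalence $h$ of $\FZ(\CC)$ intertwining the two central functors, and restricting to the two tensor factors forces $h\simeq\mathrm{id}$ and then $\alpha\simeq\beta$ (monoidally isomorphic braided functors are braidedly isomorphic). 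In the other direction, given an invertible closed bimodule $\CE\in B$, fully faithfulness of $\FZ$ gives an invertible $\CC$-$\CC$-bimodule $\CM$ with $\CE\simeq\FZ(\CM)=\Fun_{\CC|\CC}(\CM,\CM)$; Proposition~\ref{prop:inv-bimod-center} provides two braided monoidal equivalences $\FZ(\CC)\rightrightarrows\Fun_{\CC|\CC}(\CM,\CM)$, one induced by the left $\CC$-action and one by the right, and I set $\alpha_\CM$ to be ``the left one inverted against the right one'', a braided auto-equivalence of $\FZ(\CC)$. Tracing through the definitions one sees $\FZ(\CM)\simeq\CM_{\alpha_\CM}$ as monoidal $\FZ(\CC)$-$\FZ(\CC)$-bimodules; hence the two assignments are mutually inverse and the chain $\mathrm{BrPic}(\CC)\simeq B\simeq\mathrm{Aut}^{br}(\FZ(\CC))$ is complete.

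The main obstacle is this last paragraph: pinning down the conventions (which tensor factor of $\FZ(\CC)^{\rev}\boxtimes\FZ(\CC)$ gets twisted, the placement of $(-)^{\rev}$, and homomorphism versus anti-homomorphism) and verifying the two compatibilities---the multiplicativity of $\alpha\mapsto\CM_\alpha$ and the identification $\FZ(\CM)\simeq\CM_{\alpha_\CM}$---which are elementary but fiddly manipulations of half-braidings. Everything preceding it is formal, given Theorems~\ref{thm:functorial} and~\ref{thm:fully-faithful} and Proposition~\ref{prop:inv-bimod-center}.
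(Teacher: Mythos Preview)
Your approach is correct and is precisely the fleshing-out of the paper's one-line argument (``combining Theorem~\ref{thm:fully-faithful} with Proposition~\ref{prop:inv-bimod-center}''): realize $\mathrm{BrPic}(\CC)$ as the automorphism group of $\CC$ in $\mfus$, transport it along the fully faithful $\FZ$ to the automorphism group of $\FZ(\CC)$ in $\bfus$, and identify the latter with $\mathrm{Aut}^{br}(\FZ(\CC))$. The paper gives no further details, so your explicit reduction to the indecomposable case and your handling of the last identification (invertible closed bimodules versus braided auto-equivalences, via the graph bimodules $\CM_\alpha$ and Proposition~\ref{prop:inv-bimod-center}) supply exactly what is left implicit; the ``fiddly'' conventions you flag are genuine but routine, and your outline for them is sound.
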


\begin{rem}{\rm
The only-if part of Corollary\,\ref{cor:eno2008} was proved by M\"{u}ger \cite{mueger0}. Etingof, Nikshych and Ostrik proved above two results in \cite{eno2008,eno2009} for fusion categories. It is straightforward to generalize their results to multi-fusion categories. Our proof is different. Corollary \ref{cor:eno2009} also holds for tensor categories \cite{dn}.
}
\end{rem}

\begin{rem}
The physical meaning of Corollary\,\ref{cor:eno2008} and Corollary\,\ref{cor:eno2009} was explained in \cite{kitaev-kong,kong-icmp}, that of Theorem\,\ref{thm:fully-faithful} was explained in \cite{kong-wen-zheng}.
\end{rem}


\end{document}